\numberwithin{equation}{section}
\newtheorem{definition}{Definition}[section]
\newtheorem{proposition}[definition]{Proposition}
\newtheorem{lemma}[definition]{Lemma}
\newtheorem{theorem}[definition]{Theorem}
\newtheorem{corollary}[definition]{Corollary}
\newtheorem{remark}[definition]{Remark}
\newtheorem{example}[definition]{Example}
\newcommand{\NN}{{\mathscr{N}}}
\newcommand{\PP}{{\mathscr{P}}}
\newcommand{\es}{{\emptyset}}
\newcommand{\st}{{\ast}}
\newcommand{\down}{{\downarrow}}
\newcommand{\bin}{\mathcal{B}}
\newcommand{\uint}{\mathbb{Z}_{\geq 0}}
\newcommand{\imp}{\mathbb{Im}}
\newcommand{\short}{{\imp}^{\bin}}
\newcommand{\mono}{{\imp}^{\bin \down}}
\newcommand{\birth}{\mathrm{b}}
\newcommand{\grundy}{\mathcal{G}}
\newcommand{\type}{\mathrm{type}}
\DeclareMathOperator*{\mex}{mex}
\newcommand{\eqlab}[2]{\overset{(\mathrm{#1})}{#2}}
\newcommand{\defeq}[1]{\overset{\mathrm{def}}{#1}}
\begin{document}
\title{Impartial Games with Activeness}  
\author{Kengo Hashimoto}

\date{University of Fukui, E-mail: khasimot@u-fukui.ac.jp}

\maketitle

\begin{abstract}
A combinatorial game is a two-player game without hidden information or chance elements.
The main object of combinatorial game theory is to obtain the outcome, which player has a winning strategy, of a given combinatorial game.
Positions of many well-known combinatorial games are naturally decomposed into a disjunctive sum of multiple components and can be analyzed independently for each component.
Therefore, the study of disjunctive sums is a major topic in combinatorial game theory.
Combinatorial games in which both players have the same set of possible moves for every position are called impartial games.
In the normal-play convention, it is known that the outcome of a disjunctive sum of impartial games can be obtained by computing the Grundy number of each term.
The theory of impartial games is generalized in various forms.
This paper proposes another generalization of impartial games to a new framework, impartial games with activeness: each game is assigned a status of either ``active'' or ``inactive''; the status may change by moves;
a disjunctive sum of games ends immediately, not only when no further moves can be made, but also when all terms become inactive.
We formally introduce impartial games with activeness and investigate their fundamental properties.
\end{abstract}

\section{Introduction}

\subsection{Background}
A \emph{combinatorial game} is a two-player game without hidden information or chance elements; e.g., Chess, Go, and Checkers.
In a combinatorial game, two players take turns to make a move alternately.
If it is guaranteed that a winner is determined within a finite number of moves,
then exactly one of the two players has a winning strategy.
The main object of combinatorial game theory is to obtain the \emph{outcome $o(G)$}, which player has a winning strategy, of a given combinatorial game $G$.

As a classical result in combinatorial game theory, Bouton's theorem \cite{Bou1901} on Nim is well-known.
Nim is a combinatorial game played according to the following rule.
\begin{quote}
Fix a positive integer $n$ and non-negative integers $a_1, a_2, \ldots, a_n$.
Initially, there are $n$ piles of stones, and 
the $i$-th pile contains $a_i$ stones for each $i \in [n]$.
On each turn, a player removes one or more stones from any single pile.
The player who cannot move on the player's turn because all piles are empty loses, and the other player wins.
\end{quote}
The outcome of a given position of Nim can be obtained by the next proposition.

\begin{proposition}[Bouton \cite{Bou1901}]
\label{prop:nim}
In Nim, the second player has a winning strategy if and only if $a_1 \oplus a_2 \oplus \cdots \oplus a_n = 0$,
where $\oplus$ denotes \emph{nim-sum} (a.k.a.~bitwise XOR), which is the binary addition without carrying.
\end{proposition}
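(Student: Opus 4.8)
The plan is to prove Bouton's theorem by the standard strategy-stealing / invariant argument, characterizing the $\mathcal{P}$-positions (second-player wins) as exactly those with zero nim-sum. I would set $N = \{(a_1,\dots,a_n) : a_1 \oplus \cdots \oplus a_n \neq 0\}$ and $P = \{(a_1,\dots,a_n) : a_1 \oplus \cdots \oplus a_n = 0\}$, and verify the two defining closure properties of a winning/losing partition in a normal-play impartial game: (i) every move from a position in $P$ lands in $N$; (ii) from every position in $N$ there is at least one move into $P$. Since the game is clearly finite (the total number of stones strictly decreases with each move), these two properties imply that $P$ is precisely the set of second-player-win positions, which is the claim.

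For step (i), suppose $a_1 \oplus \cdots \oplus a_n = 0$ and a player moves in pile $i$, replacing $a_i$ by $a_i' < a_i$. The new nim-sum is $a_i \oplus a_i' \neq 0$ because $a_i \neq a_i'$, so the resulting position is in $N$. For step (ii), suppose $s := a_1 \oplus \cdots \oplus a_n \neq 0$. Let $k$ be the position of the leading (highest) set bit of $s$. Then some pile, say pile $i$, has a $1$ in bit $k$; equivalently $a_i \oplus s < a_i$ (flipping bit $k$ of $a_i$ from $1$ to $0$ decreases it, and the lower bits can only change things below $2^k$). Set $a_i' = a_i \oplus s$; this is a legal move since $a_i' < a_i$, and the new nim-sum is $s \oplus a_i \oplus a_i' = s \oplus a_i \oplus (a_i \oplus s) = 0$, so the new position is in $P$. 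The terminal position $(0,\dots,0)$ has nim-sum $0$ and hence lies in $P$, consistent with it being a loss for the player to move.

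The only genuinely delicate point is the bit-manipulation lemma in step (ii): that if $s \neq 0$ with leading bit $k$, then there exists $i$ with bit $k$ of $a_i$ equal to $1$, and for that $i$ we have $a_i \oplus s < a_i$. The existence of such an $i$ follows because bit $k$ of $s = \bigoplus_j a_j$ is $1$, so an odd (in particular nonzero) number of the $a_j$ have bit $k$ set. The inequality $a_i \oplus s < a_i$ holds because $a_i$ and $a_i \oplus s$ agree on all bits above $k$, $a_i$ has bit $k$ equal to $1$ while $a_i \oplus s$ has bit $k$ equal to $0$, so the difference is dominated by $2^k$ minus the contribution of the lower bits, which is positive. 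I would state this as a short numbered claim and dispatch it with this one-line bit-comparison argument; everything else is bookkeeping about the finiteness of the game and the standard fact that in a finite impartial normal-play game the partition satisfying (i) and (ii) is unique and equals the $\mathcal{N}/\mathcal{P}$ partition.
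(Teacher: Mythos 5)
Your proof is correct and complete: it is the standard argument for Bouton's theorem, verifying that the zero-nim-sum positions form the $\PP$-positions by checking that every move from a zero-nim-sum position breaks the invariant, that from any nonzero-nim-sum position one can move to a zero-nim-sum position via the leading-bit argument, and that the terminal position has nim-sum $0$. The paper itself states this proposition as a cited classical result and gives no proof (deferring to the textbooks listed in Section~\ref{sec:preliminaries}), so there is no alternative argument to compare against; your bit-manipulation lemma in step (ii), including the parity argument for the existence of a pile with the leading bit set and the comparison $a_i \oplus s < a_i$, is exactly the delicate point and is handled correctly.
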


The \emph{normal-play} (resp.~\emph{mis\`{e}re-play}) is the way to determine a winner in which a play continues until no further moves can be made, and the player who cannot move on the player's turn loses (resp.~wins), like Nim.
The normal-play is actively studied because of its rich structure,
and this paper also assumes the normal-play unless otherwise specified.

By convention, we refer to an individual game position as a \emph{game} rather than a system of playable rules, such as Chess and Go.
Instead, a system of playable rules is referred to as a \emph{ruleset}.

Positions in many well-known rulesets are naturally decomposed into a \emph{disjunctive sum} of multiple smaller positions:
the disjunctive sum $G + H$ of games $G$ and $H$ is defined as the game in which
$G$ and $H$ are played in parallel, and a player makes a move on exactly one of $G$ and $H$ in a turn.
For example, a position of Nim with $n$ piles is the disjunctive sum of $n$ positions with a single pile of Nim.
An equivalence relation among games is introduced by regarding two games $G$ and $H$ as equivalent if $o(G + X) = o(H + X)$ for any game $X$.
Namely, if $G$ and $H$ are equivalent, then a term $G$ in a disjunctive sum can be replaced with $H$ without changing the outcome.
One of the major approaches to analyzing a game is to break down a given game into a disjunctive sum of multiple components and replace each component with an equivalent and simpler game.

Games in which both players have the same set of possible moves, like Nim, are called \emph{impartial} games\footnote{Chess, Go, and Checkers are not impartial because a player can only move (or put) pieces of their assigned color.}.
It is known that every impartial game is equivalent to a single pile of Nim consisting of the same number of stones as its \emph{Grundy number} \cite{Gru39, Spr35}.
Therefore, without altering the outcome, a disjunctive sum of impartial games can be reduced to a position of Nim,
the outcome of which can be obtained by Proposition \ref{prop:nim}.

Various generalizations of the theory of impartial games have been proposed:
Smith \cite{Smi66} generalized Grundy numbers to loopy games, in which a play could go on forever;
Plambeck and Siegel \cite{Pla09, PS08} proposed a powerful method for analyzing impartial games in the mis\`{e}re-play convention using \emph{mis\`{e}re quotients}, which is a monoid formed by equivalence classes of impartial games.
Larsson, Nowakowski, and Santos \cite{LNS22} studied impartial games with \emph{entailing moves}, which
widely cover moves that force the opponent to play on a specific component in a disjunctive sum, and consecutive moves by a single player.

\subsection{Organization and Contribution}
This paper generalizes the theory of impartial games to a new framework, impartial games \emph{with activeness}.
To describe the main idea of them, we begin with examples as follows.

\begin{example}
\label{ex:nim-once}
Consider adding the following rule to Nim:
if every pile has been played at least once, then the game ends immediately (and the player who made the last move wins by the normal-play convention).

This rule can also be rephrased as follows.
Initially, each pile is ``active''.
Once a pile is played on, its state changes to ``inactive''.
When every pile is inactive, the whole game immediately ends, and the player who made the last move wins.
Note that as long as the whole game is not finished, it is also allowed to make a move on an inactive pile.
\end{example}

\begin{example}
Consider adding the following rule to Nim:
if every pile has an even number of stones, then the game ends immediately (and the player who made the last move wins by the normal-play convention).

This rule can also be rephrased as follows.
A pile with an odd (resp.~even) number of stones is active (resp.~inactive).
When every pile is inactive, the whole game immediately ends, and the player who made the last move wins.
Note that as long as the whole game is not finished,
it is also allowed to make a move on an inactive pile,
and the inactive pile may become active by the move.
\end{example}

As a generalized framework for the above, we introduce impartial games with \emph{activeness} as follows.
\begin{enumerate}[(i)]
\item Each game is assigned an attribute of either ``active'' or ``inactive''.
\item The game ends immediately not only when no further moves can be made, but also when it becomes inactive.
\item A disjunctive sum of games is defined to be active if and only if at least one term is active.
\end{enumerate}
By (ii) and (iii), a disjunctive sum of games ends immediately, not only when no further moves can be made (i.e., no further moves can be made on any term),
but also when the sum becomes inactive (i.e., all terms in the sum become inactive).
Note that it is allowed to make a move on an inactive term in the sum (except when the whole sum has ended because all terms in the sum are inactive);
as a result of the move, the inactive term may become active.

In this paper, we formalize games with activeness and investigate their fundamental properties.

This paper is organized as follows.
\begin{itemize}
\item In Section \ref{sec:preliminaries}, we briefly review known results on impartial games without activeness.

\item In Section \ref{sec:main}, we present the main results of this paper.
\begin{itemize}
\item In Subsection \ref{subsec:activeness}, 
we formalize games with activeness and define the set $\short$ of all games with activeness.
We also define the outcome $o(G^g)$ of $G^g \in \short$, a disjunctive sum $G^g + H^h$ of two games $G^g, H^h \in \short$, and 
an equivalence relation $=$ of $\short$ as
\begin{align}
G^g = H^h \defeq\iff \forall X^x \in \short,\,\, o(G^g+X^x) = o(H^h+X^x)
\end{align}
as an analogue of the definitions for games without activeness.
Furthermore, we introduce corresponding concepts to nimbers and Grundy numbers.
Throughout the subsection, we also confirm fundamental properties of the introduced concepts.

\item In Subsection \ref{subsec:distinguish}, we focus on games that belong to pairwise distinct equivalent classes modulo $=$.
We prove Theorem \ref{thm:distinguish} (\emph{Distinguishing Theorem}), which gives a necessary and sufficient condition for the following condition:
there exists an active game $X^1 \in \short$ such that
$o(G_1^{g_1} + X^1) = o(G_2^{g_2} + X^1) = \cdots = o(G_m^{g_m} + X^1) =\PP$ and
$o(H_1^{h_1} + X^1) = o(H_2^{h_2} + X^1) = \cdots = o(H_n^{h_n} + X^1) =\NN$
for given games $G^{g_1}_1, G^{g_2}_2, \ldots, G^{g_m}_m$ and $H^{h_1}_1, H^{h_2}_2, \ldots, H^{h_n}_n$.

\item In Subsection \ref{subsec:simplify}, we prove Theorem \ref{prop:simplify} (\emph{Simplifying Theorem}), which simplifies a given game to an equivalent and simpler game under a certain condition.
More precisely, it gives a sufficient and necessary condition of $G^g = H^h$, where $G^g$ is \emph{covered} (Definition \ref{def:cover}) by $H^h$ .

\item In Subsection \ref{subsec:canonical}, we define a unique ``canonical'' game as a representative of  each equivalence class.
More precisely, we say that a game is \emph{canonical} if none of its subpositions (Definition \ref{def:subpos}) have a reversible (Definition \ref{def:reversible}) option,
and we prove that there exists exactly one canonical game for each equivalence class as Theorem \ref{thm:canonical} (\emph{Canonical Form Theorem}).
We also show that \emph{transitive} (Definition \ref{def:transitive}) games, including nimbers, are canonical,
and there are also an infinite number of non-transitive canonical games.

\item In Subsection \ref{subsec:nimber}, we discuss nimbers as a special case of the results up to Subsection \ref{subsec:canonical}.
In particular, we prove the mex rule and the addition rule among nimbers,
and further demonstrate that simple rules apply under more specific conditions.
\end{itemize}

\item In Section \ref{sec:mono}, we introduce the set $\mono$ of \emph{non-increasing} games, in which once a game becomes inactive, it never returns to the active status.
We confirm that even when limiting the discussion to non-increasing games, the results in Section \ref{sec:main} hold almost unchanged.
\end{itemize}

Part of the argument in Subsections \ref{subsec:distinguish}--\ref{subsec:canonical} is an analogue of the theory of mis\`{e}re canonical form \cite[Section V-3]{Sie13}.

Hereafter, all games are impartial even if not explicitly stated.
Ordinary (impartial) games and (impartial) games with activeness are distinguished by being called
(impartial) games \emph{without} activeness and (impartial) games \emph{with} activeness, respectively.

\section{Impartial Games}
\label{sec:preliminaries}

In this section, we briefly introduce known results on impartial games without activeness.
We leave the proofs of the propositions and other detailed discussions to the textbooks such as \cite{ANW19, BCG18, Con00, HG16, Sie13}.

Let $\uint$ denote the set of all non-negative integers. For $n \in \uint$, let $[n]$ denote the set of all positive integers at most $n$.
Namely,
\begin{align}
\uint \defeq= \{0, 1, 2, \ldots, \}, \quad [n] \defeq= \{1, 2, \ldots, n\}.
\end{align}

As a player makes a move in game $G$, the game $G$ transitions to another game $G'$ closer to the end than $G$.
An impartial game $G$ is identified by the set of all possible transitions by a move from $G$.
More precisely, an element $G$ of the set $\imp_n$ of all impartial games which terminate within $n$ moves is identified by
the set of all impartial games reached by a move from $G$, which is a subset of the set $\imp_{n-1}$ of all games which terminate within $n-1$ moves.
The set $\imp$ of all impartial games is the union of the sets $\imp_n$ for all $n \in \uint$.
The formal definition is as follows.

\begin{definition}
\label{def:impartial}
For $n \geq \uint$, the set $\imp_n$ is defined as
\begin{align}
\imp_n \defeq{=}
\begin{cases}
\{\es\} &\,\,\text{if}\,\,n = 0,\\
2^{\imp_{n-1}} &\,\,\text{if}\,\,n \geq 1,\\
\end{cases}
\end{align}
where $2^A$ denotes the power set of a set $A$.
A \emph{game} is an element of
\begin{align}
\imp \defeq{=} \bigcup_{n \in \uint} \imp_n.
\end{align}
\end{definition}

Note that $\imp_0 \subsetneq \imp_1 \subsetneq \imp_2 \subsetneq \cdots$ by the definition.
The empty game $\es$ corresponds to the end position in which neither player can make any more moves.
An element of a game $G$ is called an \emph{option} of $G$, which corresponds to a possible transition of the position by a move.
If $G, H \in \imp$ are the same elements of $\imp$, then we write $G \cong H$ instead of $G = H$.
A notation $G = H$ is reserved for another meaning as stated later, and $G = H$ and $G \cong H$ are strictly distinguished.

The games in the next definition, called \emph{nimbers}, are fundamental.
\begin{definition}
\label{def:imp-star}
For $n \in \uint$, a game $\st n$ is defined as
\begin{align}
\st n \defeq\cong \{\st n' : n' \in \uint, n' < n\} \label{eq:y2wjh4pe0bmr}
\end{align}
recursively. In particular, $\st 0 \defeq\cong \es$.
\end{definition}
Note that $\st n$ corresponds to a single pile consisting of $n$ stones in Nim.

\begin{example}
\begin{align}
\imp_0 &= \{\es\} = \{\st 0\},\\
\imp_1 &= 2^{\imp_0} = \{\es, \{\es\}\} = \{\st 0, \st 1\},\\
\imp_2 &= 2^{\imp_1} = \{\es, \{\es\}, \{\{\es\}\}, \{\es, \{\es\}\}\} = \{\st 0, \st 1, \{\st 1\}, \st 2\}.
\end{align}
\end{example}

\subsection{The Outcome Classes}

For each game, exactly one player has a winning strategy.
Namely, one player can win by repeatedly ``making an appropriate response to the other player's move based on a certain strategy.''
The following mapping indicates which player has a winning strategy for a given game.

\begin{definition}
\label{def:imp-outcome}
We define a mapping $o \colon \imp \to \{\PP, \NN\}$ recursively as
\begin{align}
o(G) \defeq=
\begin{cases}
\NN &\,\,\text{if}\,\, \exists G' \in G \,\,\text{s.t.}\,\, o(G') = \PP,\\
\PP &\,\,\text{if}\,\, \forall G' \in G, o(G') = \NN.
\end{cases}
\end{align}
\end{definition}

The condition $o(G) = \NN$ indicates that the first player (i.e., the player who makes a move on $G$ next)  has a winning strategy in $G$, and $o(G) = \PP$ indicates that the second player does\footnote{The symbol $\NN$ (resp.~$\PP$) is derived from the initial letter of ``N''ext player (resp.~``P''revious player).}.
In fact, if $o(G) = \NN$, then the first player has the following winning strategy.
\begin{quote}
Call the two players Alice and Bob.
Suppose that Alice is the player who plays on $G$ first.
Since $o(G) = \NN$, Alice can make a move to some $G'$ with $o(G') = \PP$.
On the other hand, every option $G''$ of $G'$ satisfies $o(G'') = \NN$ by $o(G') = \PP$,
and thus Bob cannot avoid passing Alice a game $G''$ with $o(G'') = \NN$.
By repeating this process, Alice can ensure that the games she plays on her turn are always in $\NN$, allowing her to continue playing without exhausting her options.
Namely, Alice can guarantee that she will win.
\end{quote}

By abusing notation, we define $\PP \defeq{=} \{G \in \imp : o(G) = \PP\}$ and $\NN \defeq{=} \{G \in \imp : o(G) = \NN\}$.
Then $\imp$ is divided into a disjoint union $\imp = \NN \sqcup \PP$.
The sets $\PP$ and $\NN$ are called the \emph{outcome classes}, and $o(G)$ is called the \emph{outcome of $G$} for $G \in \imp$.

\begin{example}
Let $G \defeq\cong \{\{\es, \{\es\}\},\{\es, \{\{\es\}\}\}\}$.
We confirm $o(G) = \PP$ as follows.
\begin{enumerate}[(i)]
\item $o(\es) = \PP$ because $\PP$ has no options (in particular, every option is in $\NN$).
\item $o(\{\es\}) = \NN$ because it has an option $\es \in \PP$ by (i).
\item $o(\{\{\es\}\}) = \PP$ because the only option is $\{\es\} \in \NN$ by (ii).
\item $o(\{\es, \{\es\}\}) = \NN$ because it has an option $\es \in \PP$ by (i).
\item $o(\{\es, \{\{\es\}\}\}) = \NN$ because it has an option $\es \in \PP$ by (i).
\item $o(G) = o(\{\{\es, \{\es\}\},\{\es, \{\{\es\}\}\}\}) = \PP$
because every option is in $\NN$: $\{\es, \{\es\}\} \in \NN$ by (iv), and $\{\es, \{\{\es\}\}\} \in \NN$ by (v).
\end{enumerate}
\end{example}

\subsection{Addition of Games}

The (disjunctive) sum $G+H$ of games $G$ and $H$ is the game made by combining $G$ and $H$
in which $G$ and $H$ are played in parallel, and a player makes a move on exactly one of $G$ and $H$ in a turn.

\begin{definition}
\label{def:imp-sum}
For $G, H \in \imp$,
the \emph{(disjunctive) sum} $G + H$ of $G$ and $H$ is defined recursively as
\begin{align}
G+H &\defeq{=} \{G' + H : G' \in G\} \cup \{G + H' : H' \in H\}.
\end{align}
\end{definition}

\begin{example}
Consider the sum of $\{\es, \{\es\}\}$ and $\{\{\es\}\}$.
By the definition, we have
\begin{align}
\{\es, \{\es\}\} + \{\{\es\}\} \cong \{ \es + \{\{\es\}\}, \{\es\} + \{\{\es\}\}, \{\es, \{\es\}\} + \{\es\} \}.
\end{align}
Intuitively, $\{\es, \{\es\}\} + \{\{\es\}\}$ is the game in which a player makes a move on exactly one of $\{\es, \{\es\}\}$ and  $\{\{\es\}\}$ in a turn.
Namely, the player has the following three options:
\begin{itemize}
\item the move to $\es + \{\{\es\}\}$ playing on the first component $\{\es, \{\es\}\}$,
\item the move to $\{\es\} + \{\{\es\}\}$ playing on the first component $\{\es, \{\es\}\}$,
\item the move to $\{\es, \{\es\}\} + \{\es\}$ playing on the second component $\{\{\es\}\}$.
\end{itemize}
\end{example}

The addition is associative and commutative and has an identity $\es \in \imp$ as follows.

\begin{proposition}~
\label{prop:imp-monoid}
\begin{enumerate}[(i)]
\item For any $G, H, J \in \imp$, we have $(G + H) + J \cong G + (H + J)$.
\item For any $G, H \in \imp$, we have $G + H \cong H + G$.
\item For any $G \in \imp$, we have $G + \es \cong G$.
\end{enumerate}
\end{proposition}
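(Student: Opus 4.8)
The plan is to prove each of the three claims by a straightforward structural induction, unwinding Definition \ref{def:imp-sum}. Define the \emph{rank} of a game $G$ to be the least $n$ with $G \in \imp_n$, which exists by Definition \ref{def:impartial}; since every option of $G$ has strictly smaller rank than $G$, induction on the rank (or, for statements about several games, on the sum of their ranks) is well-founded, and whenever the recursion of $+$ produces an option of a sum, the induction hypothesis applies to it. The three claims do not rely on one another, so I would treat them separately.

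For (iii), I would induct on the rank of $G$. By Definition \ref{def:imp-sum}, $G + \es = \{G' + \es : G' \in G\} \cup \{G + \es' : \es' \in \es\}$, and the second set is empty because $\es$ (as a set) has no elements; so $G + \es = \{G' + \es : G' \in G\}$. The induction hypothesis gives $G' + \es \cong G'$ for each $G' \in G$, whence $G + \es = \{G' : G' \in G\} = G$. (When $G \cong \es$ this already reads $\es + \es \cong \es$, which is the base case.) For (ii), I would induct on the sum of the ranks of $G$ and $H$. Expanding by Definition \ref{def:imp-sum}, $G + H = \{G' + H : G' \in G\} \cup \{G + H' : H' \in H\}$ and $H + G = \{H' + G : H' \in H\} \cup \{H + G' : G' \in G\}$; the induction hypothesis gives $G' + H \cong H + G'$ and $G + H' \cong H' + G$ for all options, so these two sets of options are literally equal, i.e.\ $G + H \cong H + G$. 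For (i), I would induct on the sum of the ranks of $G$, $H$, $J$. Applying Definition \ref{def:imp-sum} twice, the option set of $(G + H) + J$ is the union of the three families $\{(G' + H) + J : G' \in G\}$, $\{(G + H') + J : H' \in H\}$, $\{(G + H) + J' : J' \in J\}$, while the option set of $G + (H + J)$ is the union of $\{G' + (H + J) : G' \in G\}$, $\{G + (H' + J) : H' \in H\}$, $\{G + (H + J') : J' \in J\}$. Applying the induction hypothesis to the triples $(G', H, J)$, $(G, H', J)$, $(G, H, J')$, each of strictly smaller total rank, identifies these two unions family-by-family, so $(G + H) + J \cong G + (H + J)$.

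I do not expect a genuine obstacle here: the only point needing care is the combinatorial bookkeeping in (i) — verifying that the two three-fold unions of options really do coincide once the induction hypothesis is applied, and that each auxiliary triple has strictly smaller total rank so that the induction is legitimate. One could instead prove all three statements simultaneously by a single induction on the total rank, but keeping them separate makes each step shorter and the matching of option sets more transparent.
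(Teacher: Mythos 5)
Your proof is correct; the paper itself omits a proof of Proposition \ref{prop:imp-monoid} (deferring to standard textbooks), but your argument by induction on (total) rank, unwinding Definition \ref{def:imp-sum} and matching the option families via the induction hypothesis, is exactly the technique the paper uses for the activeness analogue, Theorem \ref{thm:monoid}. No gaps.
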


By Proposition \ref{prop:imp-monoid} (i), we can write $(G+H)+J$ (equivalently, $G+(H+J)$) as $G+H+J$ without ambiguity.
We note that a game $\st a_1 + \st a_2 + \cdots + \st a_n$ represents a position of Nim in which there are $n$ piles, and the $i$-th pile consists of $a_i$ stones for $i \in [n]$.

\subsection{Equivalence among Games}

We next introduce a binary relation $=$ of $\imp$, which intuitively indicates that two games play the same role in a sum.

\begin{definition}
\label{def:imp-equiv}
We define a binary relation $=$ of $\imp$ as
\begin{align}
\{(G, H) \in \imp \times \imp : \forall X \in \imp, o(G+X) = o(H+X)\}.
\end{align}
Namely, 
\begin{align}
G = H \defeq\iff \forall X \in \imp,\,\, o(G+X) = o(H+X).
\end{align}
\end{definition}
Hence, if $G = H$, then $G$ in a sum can be interchanged with $H$ without changing the outcome of the whole sum.

The binary relation $=$ satisfies the following basic properties;
in particular, the binary relation $=$ is an equivalence relation of $\imp$.

\begin{proposition}~
\label{prop:order}
\begin{enumerate}[(i)]
\item For any $G, H, J \in \imp$, if $G = H$ and $H = J$, then $G = J$.
\item For any $G, H, J, K \in \imp$, if $G = H$ and $J = K$, then $G+J = H+K$.
\item For any $G, H \in \imp$, if $G = H$, then $o(G) = o(H)$.
\end{enumerate}
\end{proposition}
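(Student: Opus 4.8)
The plan is to treat the three parts in the order (i), (iii), (ii); parts (i) and (iii) are immediate from the definition of $=$ (Definition \ref{def:imp-equiv}) together with Proposition \ref{prop:imp-monoid}, while (ii) is the only part needing a short argument. Throughout I would use, besides Definition \ref{def:imp-equiv}, only the monoid laws of Proposition \ref{prop:imp-monoid} and the fact that $\imp$ is closed under the disjunctive sum (an easy induction giving $G \in \imp_m$ and $H \in \imp_n$ imply $G+H \in \imp_{m+n}$), so that sums such as $J+X$ and $H+X$ remain legitimate test games to plug into the definition of $=$.

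For (i), assume $G = H$ and $H = J$ and fix an arbitrary $X \in \imp$. By Definition \ref{def:imp-equiv} we have $o(G+X) = o(H+X)$ and $o(H+X) = o(J+X)$; chaining these gives $o(G+X) = o(J+X)$, and since $X$ was arbitrary, $G = J$. Reflexivity and symmetry of $=$ are immediate from the definition, so together with (i) this shows $=$ is an equivalence relation. For (iii), assume $G = H$ and instantiate the defining property of $=$ at the test game $X \cong \es$: this gives $o(G+\es) = o(H+\es)$, and by Proposition \ref{prop:imp-monoid}(iii) we have $G+\es \cong G$ and $H+\es \cong H$, hence $o(G) = o(H)$.

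For (ii), assume $G = H$ and $J = K$ and fix an arbitrary $X \in \imp$. Using associativity (Proposition \ref{prop:imp-monoid}(i)) to regard $J+X$ as a single test game, the hypothesis $G = H$ yields $o(G+J+X) = o(G+(J+X)) = o(H+(J+X)) = o(H+J+X)$. Now rewrite $H+J+X$ via commutativity and associativity (Proposition \ref{prop:imp-monoid}(i),(ii)) so that $H+X$ plays the role of a single test game adjacent to $J$; the hypothesis $J = K$ then yields $o(H+J+X) = o(J+(H+X)) = o(K+(H+X)) = o(H+K+X)$. Combining the two chains gives $o(G+J+X) = o(H+K+X)$ for every $X \in \imp$, i.e.\ $G+J = H+K$.

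I expect no genuine obstacle here: the subtle points are merely (a) confirming $\imp$ is closed under $+$, so that the intermediate objects $J+X$ and $H+X$ are valid inputs to the definition of $=$, and (b) being careful that the unparenthesised sums $G+J+X$, $H+J+X$, $H+K+X$ are well defined, which is exactly what Proposition \ref{prop:imp-monoid}(i),(ii) provide; after that, (ii) is just two applications of Definition \ref{def:imp-equiv} with the components of the sum relabelled in between. So the "main obstacle" is bookkeeping rather than anything conceptual.
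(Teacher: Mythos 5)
Your proposal is correct. Note that the paper itself gives no proof of Proposition \ref{prop:order}: it is stated in the preliminaries with the proofs explicitly deferred to the standard textbooks. Your argument is the standard one — transitivity by chaining the defining condition, part (iii) by instantiating at $X \cong \es$ and invoking $G + \es \cong G$, and part (ii) by two applications of the definition with $J+X$ and then $H+X$ serving as the test game, justified by closure of $\imp$ under $+$ and the monoid laws — and all steps check out.
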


Replacing an option $G'$ of a game $G$ with another equivalent game $H'$ to $G'$ yields an equivalent game $H$ to $G$ as follows.
\begin{proposition}[Replacement Lemma]
\label{cor:replace}
For any $\{G'_1, G'_2, \ldots, G'_n\} \in \imp$ and $H'_1 \in \imp$, if $G'_1 = H'_1$, then we have $\{G'_1, G'_2, \ldots, G'_n\} = \{H'_1, G'_2, \ldots, G'_n\}$.
\end{proposition}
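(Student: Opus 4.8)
The plan is to reduce the Replacement Lemma to the congruence property of $=$ with respect to addition, which is Proposition \ref{prop:order}(ii), together with the fact that $G'_1 = H'_1$ implies $\{G'_1\} = \{H'_1\}$ as singleton games. First I would observe that for any games, the union of game-sets can be expressed via disjunctive sums of singletons in a controlled way; specifically, I expect to prove the auxiliary claim that if $G = H$ then $\{G\} = \{H\}$, and then handle the extra options $G'_2, \ldots, G'_n$ by a separate argument. The reason $\{G\} = \{H\}$ holds is that for any test game $X$, the position $\{G\} + X$ has options $G + X$ and $\{G\} + X'$ for $X' \in X$; so by induction on the rank of $X$ (the least $n$ with $X \in \imp_n$), one checks that $o(\{G\}+X) = o(\{H\}+X)$, using $o(G+X) = o(H+X)$ for the first kind of option and the induction hypothesis for the second kind.

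Next I would address the full statement with arbitrary additional options. The cleanest route is to prove directly, by induction on $\max(\birth(X), \text{something})$ or more simply by induction on the rank of $X$, that $o(\{G'_1,\ldots,G'_n\} + X) = o(\{H'_1, G'_2,\ldots,G'_n\} + X)$ for all $X \in \imp$. Write $G \defeq\cong \{G'_1, G'_2, \ldots, G'_n\}$ and $H \defeq\cong \{H'_1, G'_2, \ldots, G'_n\}$. The options of $G + X$ are: $G'_1 + X$, each $G'_i + X$ for $i \geq 2$, and each $G + X'$ for $X' \in X$. The options of $H + X$ are: $H'_1 + X$, each $G'_i + X$ for $i \geq 2$, and each $H + X'$ for $X' \in X$. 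Now $o(G'_1 + X) = o(H'_1 + X)$ directly from the hypothesis $G'_1 = H'_1$; the middle options $G'_i + X$ ($i \geq 2$) are literally identical on both sides; and $o(G + X') = o(H + X')$ by the induction hypothesis since $X'$ has strictly smaller rank than $X$. Hence $G + X$ and $H + X$ have options with pairwise equal outcomes, so $o(G + X) = o(H + X)$ by the recursive definition of $o$ (Definition \ref{def:imp-outcome}): $\NN$ iff some option is in $\PP$, and this condition is preserved.

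Alternatively — and this may be the slickest presentation — one can avoid induction on $X$ entirely by invoking Proposition \ref{prop:order}(ii) after establishing $\{G\} = \{H\}$: write $\{G'_1, G'_2, \ldots, G'_n\} \cong \{G'_1\} \cup \{G'_2, \ldots, G'_n\}$, but since set-union of games is not literally a disjunctive sum this does not immediately apply, so in fact the direct induction above is the honest approach and I would present that. The main obstacle, such as it is, is purely bookkeeping: keeping the three families of options of $G + X$ and $H + X$ in correspondence and being careful that the induction is well-founded (it is, because every $X' \in X$ lies in a strictly smaller $\imp_{n-1}$, and $\es$ is the base case where $G + \es \cong G$ and $H + \es \cong H$ have options $G'_1, \ldots, G'_n$ versus $H'_1, G'_2, \ldots, G'_n$ with equal outcomes by the hypothesis applied with $X \cong \es$). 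No genuinely hard step is involved; the lemma is a direct consequence of the compositional definition of the outcome function and the definition of $=$.
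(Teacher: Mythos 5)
The paper itself omits the proof of Proposition \ref{cor:replace} (Section \ref{sec:preliminaries} defers all such proofs to textbooks), but your direct induction on the test game $X$, matching the three families of options of $G+X$ and $H+X$ outcome-for-outcome, is correct and is essentially the same argument the paper uses for the activeness analogue (Theorem \ref{prop:replace}, proved via Lemma \ref{lem:equal}, which is likewise an induction on the test position). Your own observation that the route through Proposition \ref{prop:order}(ii) does not apply because set-union of options is not a disjunctive sum is accurate, and the version you actually present has no gap.
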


\subsection{Nimbers and Grundy Numbers}

If every option of a game $G$ is a nimber, then $G$ is equivalent to a nimber by the following proposition.

\begin{proposition}[Mex-Rule]
\label{prop:imp-mex}
For any finite set $A \subseteq \uint$, we have $\{\st a : a \in A\} = \st (\mex A)$,
where $\mex A \defeq= \min (\uint \setminus A)$.
\end{proposition}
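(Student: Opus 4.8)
The plan is to prove the Mex-Rule by showing the stronger statement that for any finite $A \subseteq \uint$, the game $\{\st a : a \in A\}$ is equivalent to $\st(\mex A)$, which by Definition \ref{def:imp-equiv} requires checking that $o\bigl(\{\st a : a \in A\} + X\bigr) = o\bigl(\st(\mex A) + X\bigr)$ for every $X \in \imp$. Write $G \defeq\cong \{\st a : a \in A\}$ and $m \defeq= \mex A$. Following the standard strategy for proving $G = H$ in normal-play impartial theory, I would show instead that $G + \st m \in \PP$, i.e.\ that the second player wins the sum $G + \st m$; this suffices because in normal-play every impartial game $K$ satisfies $K = \es$ if and only if $K \in \PP$ (equivalently $K + K \in \PP$ and $G = H \iff G + H \in \PP$). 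Strictly speaking the excerpt has not stated this reduction explicitly, so I would either invoke it as a standard fact about $\imp$ or, to stay self-contained, argue directly by strategy-stealing on $G + \st m + X$ for arbitrary $X$; I will sketch the direct symmetric-strategy argument below since it is cleaner to present without extra machinery.

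First I would analyze the position $G + \st m$ directly and exhibit a winning strategy for the second player. The moves available are: (a) a move in $G$, replacing $G$ by some $\st a$ with $a \in A$, landing in $\st a + \st m$; (b) a move in $\st m$ decreasing it to $\st k$ with $k < m$, landing in $G + \st k$. In case (a), since $a \in A$ we have $a \neq m$ by definition of $\mex$, so $\st a + \st m$ is a sum of two distinct nimbers and the first player's opponent moves to $\st \min(a,m) + \st \min(a,m)$, a known $\PP$-position (the second player mirrors from now on, or one cites $\st n + \st n = \es$). In case (b), since $k < m = \mex A$ we have $k \in A$, so the opponent can move in $G$ to $\st k$, reaching $\st k + \st k \in \PP$. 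In both cases the second player restores a $\PP$-position, so $G + \st m \in \PP$.

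Having established $G + \st m \in \PP$, I would then upgrade this to the equivalence $G = \st m$. The clean route is the identity $\st n + \st n \cong$ (a game equivalent to) $\es$ together with the general principle that for impartial $G, H$, one has $G = H$ iff $G + H \in \PP$; concretely, for any $X$, play in $G + \st m + X$ by pairing a move of the hypothetical first player in $G$ (resp.\ $\st m$) against the mirroring move in $\st m$ (resp.\ $G$) as above, and leave $X$ alone — this shows $o(G + X) = o(\st m + X)$ because the $G + \st m$ part contributes a ``null'' component. So the argument reduces entirely to the two-case analysis already done, now applied inside the larger sum $G + \st m + X$: whenever the opponent plays inside $G + \st m$, the strategy restores that block to a position of the form $\st j + \st j$ (which plays like $\es$), and whenever the opponent plays in $X$, we defer, maintaining the invariant that the $G + \st m$ block is equivalent to $\es$.

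The main obstacle is not the combinatorics — the two-case split on $a \in A$ versus $k < m \in A$ is exactly the content of the $\mex$ definition and is routine — but rather the bookkeeping needed to pass from ``$G + \st m \in \PP$'' to ``$G = \st m$'' without having the cancellation law $\st n + \st n = \es$ or the equivalence $G + H \in \PP \iff G = H$ available as a cited black box in the excerpt. I would handle this by proving the needed mirroring fact in-line: define the second player's strategy on $G + \st m + X$ to respond to any move within the $G + \st m$ block so as to return it to some $\st j + \st j$, respond to a move in $X$ from $\st j + \st j + X$ by... deferring is not allowed since the player must move, so more precisely the invariant is that after the second player's move the $G+\st m$ block is at $\st j + \st j$ and it is the opponent's turn; if the opponent moves in $X$ we are now first-to-move in $\st j + \st j + X'$ and we recurse on the claim for $X'$. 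This recursion on the structure of $X$ (well-founded since all games terminate) is the only genuinely delicate point, and I would present it as the core lemma, with the $\mex$ case analysis slotted in as the base computation.
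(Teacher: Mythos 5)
The paper does not actually prove Proposition \ref{prop:imp-mex}: it is listed among the classical facts of Section \ref{sec:preliminaries} whose proofs are explicitly deferred to the textbooks, so there is no in-paper argument to compare against. Judged on its own, your proof is the standard textbook argument and is essentially correct: the two-case analysis showing $\{\st a : a \in A\} + \st(\mex A) \in \PP$ (answer a move to $\st a + \st m$ by equalizing to $\st{\min(a,m)} + \st{\min(a,m)}$; answer a move to $G + \st k$ with $k < m$ by playing $G$ to $\st k$, which is legal precisely because $k < \mex A$ forces $k \in A$) is exactly right. The only loose point is the upgrade from $G + \st m \in \PP$ to $G = \st m$. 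Your invariant-maintenance description is tangled (the block starts at $G + \st m$, which is not of the form $\st j + \st j$, and the ``recurse on $X'$'' step silently needs that $X' \in \NN$ has an option in $\PP$ to respond into). The clean way to finish is to isolate the general lemma that $K \in \PP$ implies $o(K + X) = o(X)$ for all $X$, proved by a short induction on the sum, and then apply it twice: $o(G + X) = o(G + X + \st m + \st m) = o((G + \st m) + (\st m + X)) = o(\st m + X)$, using the lemma once with $K \cong \st m + \st m$ and once with $K \cong G + \st m$. With that lemma stated and proved, your argument is complete; as written, the final paragraph gestures at this machinery without quite pinning it down, but there is no conceptual gap.
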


By repeatedly applying Propositions \ref{cor:replace} and \ref{prop:imp-mex},
we can see that any game is equivalent to a nimber.

On the other hand, no two distinct nimbers are equivalent to each other as follows.
\begin{proposition}
For any $a, b \in \uint$, if $\st a = \st b$, then $a = b$.
\end{proposition}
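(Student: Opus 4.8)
I would prove the contrapositive: if $a \neq b$ then $\st a \neq \st b$. By symmetry assume $a < b$. It suffices to exhibit a single $X \in \imp$ with $o(\st a + X) \neq o(\st b + X)$, which by Definition \ref{def:imp-equiv} forbids $\st a = \st b$. The natural choice is $X \cong \st a$, and the whole argument then rests on one auxiliary fact: \emph{for every $n \in \uint$, $o(\st n + \st n) = \PP$} (the second player wins by copying the first player's move in the other copy).

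The plan is to prove this auxiliary fact by strong induction on $n$. By Definitions \ref{def:imp-star} and \ref{def:imp-sum}, the options of $\st n + \st n$ are exactly the games $\st m + \st n$ with $m < n$. For such an option, moving the pile of size $n$ down to size $m$ produces the option $\st m + \st m$, which lies in $\PP$ by the induction hypothesis; hence $o(\st m + \st n) = \NN$ by Definition \ref{def:imp-outcome}. Thus every option of $\st n + \st n$ is in $\NN$, so $o(\st n + \st n) = \PP$, completing the induction (the base case $n = 0$ is $o(\es) = \PP$).

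Finally I would take $X \cong \st a$ and conclude. The auxiliary fact gives $o(\st a + \st a) = \PP$. For $\st b + \st a$: using commutativity (Proposition \ref{prop:imp-monoid}(ii)) and $a < b$, the game has the option obtained by reducing the pile of size $b$ to size $a$, namely $\st a + \st a \in \PP$, so $o(\st b + \st a) = \NN$. Therefore $o(\st a + X) = \PP \neq \NN = o(\st b + X)$, so $\st a \neq \st b$, as desired. (Alternatively, since $\st a + \st b$ is a two-pile Nim position, Proposition \ref{prop:nim} gives $o(\st a + \st b) = \PP \iff a \oplus b = 0 \iff a = b$ directly, which combined with $o(\st b + \st b) = \PP$ yields the same conclusion.) There is no genuine obstacle here; the only real content is the $\st n + \st n$ lemma, whose proof is the routine mirroring induction sketched above.
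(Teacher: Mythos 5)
Your proposal is correct. Note that the paper itself gives no proof of this proposition: Section \ref{sec:preliminaries} explicitly defers all proofs of its stated propositions to the standard textbooks, so there is nothing to compare against. Your argument --- the mirroring induction establishing $o(\st n + \st n) = \PP$, followed by observing that for $a < b$ the game $\st b + \st a$ has the option $\st a + \st a \in \PP$ and hence lies in $\NN$, so that $X \cong \st a$ distinguishes $\st a$ from $\st b$ --- is exactly the standard textbook proof, and all steps check out against Definitions \ref{def:imp-star}, \ref{def:imp-sum}, \ref{def:imp-outcome}, and \ref{def:imp-equiv}.
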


Namely, the following proposition holds.
\begin{proposition}[Sprague\cite{Spr35}, Grundy\cite{Gru39}]
For any $G \in \imp$, there exists a unique integer $n \in \uint$ such that $G = \st n$.
\end{proposition}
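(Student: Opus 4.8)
The plan is to combine the two preceding propositions. The Sprague–Grundy proposition asserts both existence and uniqueness of $n \in \uint$ with $G = \st n$; existence follows by structural induction from the Mex-Rule and the Replacement Lemma, while uniqueness follows from the proposition that $\st a = \st b$ implies $a = b$. So the bulk of the work is establishing these two ingredients, and the statement then follows immediately.

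For \emph{existence}, I would argue by induction on the least $n$ such that $G \in \imp_n$ (equivalently, on the rank of $G$). The base case is $G \cong \es = \st 0$, which is trivially equivalent to the nimber $\st 0$. For the inductive step, write $G \cong \{G'_1, \ldots, G'_k\}$ where each $G'_i$ has strictly smaller rank. By the induction hypothesis, for each $i$ there is $a_i \in \uint$ with $G'_i = \st a_i$. Applying the Replacement Lemma $k$ times, we get $G = \{\st a_1, \ldots, \st a_k\}$. Then the Mex-Rule (Proposition \ref{prop:imp-mex}) gives $\{\st a_1, \ldots, \st a_k\} = \st(\mex\{a_1, \ldots, a_k\})$, and transitivity of $=$ (Proposition \ref{prop:order}(i)) yields $G = \st n$ with $n = \mex\{a_1, \ldots, a_k\}$. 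This handles existence.

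For \emph{uniqueness}, suppose $G = \st a$ and $G = \st b$. By symmetry and transitivity of $=$ we get $\st a = \st b$, and the immediately preceding proposition forces $a = b$. (One can prove that proposition directly: if $a \neq b$, say $a < b$, take $X \cong \st a$; then $o(\st a + \st a) = \PP$ since $\st a + \st a$ is a second-player win — mirror the opponent's moves — whereas $\st b + \st a = \st b + \st a$ has the move to $\st a + \st a \in \PP$, so $o(\st b + \st a) = \NN$, contradicting $\st a = \st b$. But since the needed statement is already available in the excerpt, I would simply cite it.)

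I do not expect any serious obstacle here: the statement is a direct corollary of results already proved, and the only genuine content is the induction for existence, which is routine once the Mex-Rule and Replacement Lemma are in hand. The one point requiring a little care is ensuring the induction is well-founded — this is guaranteed because $\imp = \bigcup_n \imp_n$ with $\imp_0 \subsetneq \imp_1 \subsetneq \cdots$, so every game has a finite rank and every option has strictly smaller rank, making structural induction on games legitimate.
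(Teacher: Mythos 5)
Your proposal is correct and matches the paper's own (sketched) argument exactly: the paper derives this proposition by noting that repeated application of Proposition \ref{cor:replace} and Proposition \ref{prop:imp-mex} shows every game is equivalent to a nimber, and that distinct nimbers are inequivalent, deferring full details to the textbooks. Your induction on rank for existence and your citation (plus optional mirroring argument) for uniqueness fill in precisely those details with no gaps.
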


\begin{example}
Let $G \defeq\cong \{\{\es, \{\es\}\},\{\es, \{\{\es\}\}\}\}$.
We confirm $G = \st 0 \cong \emptyset$ as follows.
\begin{enumerate}[(i)]
\item $\es \cong \st 0$ by \eqref{eq:y2wjh4pe0bmr}.
\item We have \begin{align}
\{\es\} \eqlab{A}\cong \{\st 0\} \eqlab{B}\cong \st 1,
\end{align}
where (A) follows from (i), and (B) follows from \eqref{eq:y2wjh4pe0bmr}.
\item We have \begin{align}
\{\{\es\}\} \eqlab{A}\cong \{\st 1\} \eqlab{B}= \st (\mex \{1\}) \cong \st 0,
\end{align}
where (A) follows from (ii), and (B) follows from Proposition \ref{prop:imp-mex}.
\item We have \begin{align}
\{\es, \{\es\}\} \eqlab{A}\cong \{\st 0, \st 1\} \eqlab{B}\cong \st 2,
\end{align}
where (A) follows from (i) and (ii), and (B) follows from \eqref{eq:y2wjh4pe0bmr}.
\item We have \begin{align}
\{\es, \{\{\es\}\}\} \eqlab{A}\cong \{\st 0, \{\{\es\}\}\} \eqlab{B}= \{\st 0, \st 0\} \cong \{\st 0\} \eqlab{C}\cong \st 1,
\end{align}
where (A) follows from (i), (B) follows from (iii) and Proposition \ref{cor:replace}, and (C) follows from \eqref{eq:y2wjh4pe0bmr}.
\item We have \begin{align}
G \cong \{\{\es, \{\es\}\},\{\es, \{\{\es\}\}\}\} \eqlab{A}= \{\st 2, \st 1\} \eqlab{B}= \st (\mex\{1, 2\}) \cong \st 0,
\end{align}
where (A) follows from (iv), (v), and Proposition \ref{cor:replace}, 
and (B) follows from Proposition \ref{prop:imp-mex}.
\end{enumerate}
\end{example}

Therefore, a sum of games can be simplified to a sum of nimbers.
Since a sum of nimbers corresponds to a position of Nim, we can obtain the outcome of it by Proposition \ref{prop:nim}.
More precisely, the following proposition holds and can be used to obtain the outcome of a sum of nimbers.

\begin{proposition}~
\label{prop:imp-star}
\begin{enumerate}[(i)]
\item For any $a \in \uint$, the following equivalence holds: $o(\st a) = \PP \iff a = 0$.
\item For any $a, b \in \uint$, we have $\st a + \st b = \st (a \oplus b)$.
\end{enumerate}
\end{proposition}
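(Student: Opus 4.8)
I would read this straight off the definitions. Since $\st 0 \cong \es$ has no option, $o(\st 0) = \PP$; and for $a \geq 1$ the game $\st a$ has the option $\st 0$ with $o(\st 0) = \PP$, so $o(\st a) = \NN$ by Definition \ref{def:imp-outcome}. Hence $o(\st a) = \PP \iff a = 0$ (equivalently, this is Proposition \ref{prop:nim} with $n = 1$).

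\textbf{Part (ii).} The plan is induction on $a + b$, the base case $a = b = 0$ being immediate from $\st 0 + \st 0 \cong \st 0$. For the inductive step, Definitions \ref{def:imp-sum} and \ref{def:imp-star} give $\st a + \st b \cong \{\st a' + \st b : a' < a\} \cup \{\st a + \st b' : b' < b\}$, and the inductive hypothesis identifies each option with $\st(a' \oplus b)$ or $\st(a \oplus b')$. Replacing the options one at a time via the Replacement Lemma (Proposition \ref{cor:replace}) and chaining with transitivity (Proposition \ref{prop:order}(i)) yields
\begin{align}
\st a + \st b = \{\st c : c \in A\}, \qquad A := \{a' \oplus b : a' < a\} \cup \{a \oplus b' : b' < b\},
\end{align}
and the Mex-Rule (Proposition \ref{prop:imp-mex}) rewrites the right-hand side as $\st(\mex A)$. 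So the whole claim comes down to the arithmetic identity $\mex A = a \oplus b$.

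To prove that identity I would check the two halves of a mex computation. That $a \oplus b \notin A$ is immediate, since $a \oplus b = a' \oplus b$ forces $a' = a$ and $a \oplus b = a \oplus b'$ forces $b' = b$. That every $c < a \oplus b$ lies in $A$ is the substantive part: letting $k$ be the most significant bit at which $c$ and $a \oplus b$ differ, bit $k$ of $a \oplus b$ is $1$, hence is $1$ in exactly one of $a$ and $b$, say in $a$ (the other case being symmetric); then $a' := c \oplus b$ satisfies $a' \oplus b = c$, and a short bit comparison at and above position $k$ shows $a' < a$, so $c = a' \oplus b \in A$. Together these give $\mex A = a \oplus b$ and finish the induction.

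The hard part — really the only non-routine point — is this last witness construction for $\mex A = a \oplus b$; everything else is bookkeeping with the $=$-calculus already established. If one prefers, part (ii) also drops out of Proposition \ref{prop:nim} and the Sprague--Grundy theorem: every $X \in \imp$ equals some $\st c$, and by Proposition \ref{prop:order} both $o(\st a + \st b + X) = o(\st a + \st b + \st c)$ and $o(\st(a \oplus b) + X) = o(\st(a \oplus b) + \st c)$ are $\PP$ exactly when $a \oplus b \oplus c = 0$, whence $o(\st a + \st b + X) = o(\st(a \oplus b) + X)$ for all $X$ and $\st a + \st b = \st(a \oplus b)$ by Definition \ref{def:imp-equiv}.
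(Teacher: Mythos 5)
Your proof is correct. Note, however, that the paper deliberately states Proposition \ref{prop:imp-star} without proof: Section \ref{sec:preliminaries} explicitly defers all proofs of these classical facts to the cited textbooks, so there is no in-paper argument to compare against. What you give is the standard textbook proof --- part (i) directly from Definition \ref{def:imp-outcome}, and part (ii) by induction on $a+b$ reducing to the identity $\mex\left(\{a'\oplus b : a' < a\} \cup \{a \oplus b' : b' < b\}\right) = a \oplus b$, with the usual most-significant-differing-bit witness construction --- and both that route and your alternative via Bouton's theorem and the Sprague--Grundy theorem (which the paper states independently, so no circularity arises) are sound. It is also worth observing that the paper reuses exactly your inductive decomposition in its own Theorem \ref{prop:star-add} for nimbers with activeness, where the identity $\mex A = a \oplus b$ is isolated as Lemma \ref{lem:xor} (i) (itself left unproved), so your argument is consistent with how the author handles the generalized setting.
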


For $G \in \imp$, the unique integer $n \in \uint$ such that $G = \st n$ is called the \emph{Grundy number} of $G$ and denoted by $\grundy(G)$.
By Propositions \ref{cor:replace} and \ref{prop:imp-mex}, we have the following recursive formula to compute the Grundy number of a game $G$:
\begin{align}
\grundy(G) = \mex\{\grundy(G') : G' \in G\}. \label{eq:ofe48aki7qcy}
\end{align}

To summarize, given a sum of games $G \defeq\cong G_1 + G_2 + \cdots + G_n$,
we can obtain the outcome of $G$ by the following procedure.
\begin{enumerate}
\item For each $i \in [n]$, compute $\grundy(G_i)$ by \eqref{eq:ofe48aki7qcy} and replace $G_i$ with $\st \grundy(G_i)$.
\item Rewrite the sum $\st \grundy(G_1) + \st \grundy(G_2) + \cdots + \st \grundy(G_n)$
as $\st (\grundy(G_1) \oplus \grundy(G_2) \oplus \cdots \oplus \grundy(G_n))$ by applying Proposition \ref{prop:imp-star} (ii).
\item Determine $o(G) = \PP$ if and only if $\grundy(G_1) \oplus \grundy(G_2) \oplus \cdots \oplus \grundy(G_n) = 0$ by utilizing Proposition \ref{prop:imp-star} (i).
\end{enumerate}

Therefore, to obtain the outcome of a given sum of games,
one needs only to find the Grundy number of each term independently and check whether their nim-sum is zero.

\section{Main Results}
\label{sec:main}

In this section, we formally introduce games with activeness and investigate their properties as the main results of this paper.
We restate the main idea mentioned in the Introduction as follows.
\begin{enumerate}[(i)]
\item Each game is assigned an attribute of either ``active'' or ``inactive''.
\item The game ends immediately not only when no further moves can be made, but also when it becomes inactive.
\item A sum of games is defined to be active if and only if at least one term is active.
\end{enumerate}
In particular, a sum of games ends immediately not only when no further moves can be made (i.e., no further moves can be made on any term),
but also when the sum becomes inactive (i.e., all terms in the sum become inactive).
It is allowed to make a move on an inactive term in the sum (except when the whole sum has ended because all terms in the sum are inactive);
as a result of the move, the inactive term may become active.

\subsection{Impartial Games with Activeness}
\label{subsec:activeness}

We formalize the idea of games with activeness by modifying Definition \ref{def:impartial}.
A game with activeness is identified by the pair $(G, b)$ of
\begin{itemize}
\item the set $G$ of all possible transitions by a move from the game, and
\item an integer $b \in \bin \defeq= \{0, 1\}$ representing whether the game is active or inactive, where $0$ (resp.~$1$) means that the game is inactive (resp.~active).
\end{itemize}
The formal definition is as follows.

\begin{definition}
For $n \in \uint$, we define
\begin{align}
\short_n
&\defeq{=} \begin{cases}
\{\es\} \times \bin &\,\,\text{if}\,\, n = 0,\\
2^{\short_{n-1}} \times \bin &\,\,\text{if}\,\, n \geq 1.
\end{cases}
\end{align}
A \emph{game with activeness} is an element of
\begin{align}
\short &\defeq{=} \bigcup_{n \in \uint} \short_n.
\end{align}
\end{definition}

We write $G^g$ for $(G, g)$.
For $G^g, H^h \in \short$, we write $H^h \in G^g$ (resp.~$H^h \subseteq G^g$) for the condition $H^h \in G$ (resp.~$H \subseteq G$) by abusing notation.
If $G^g, H^h \in \short$ are the same elements of $\short$, then we write $G^g \cong H^h$.
We have $\short_0 \subsetneq \short_1 \subsetneq \short_2 \subsetneq \cdots$ by the definition.

Hereinafter, we refer to a game with activeness as ``game'' simply, unless otherwise specified.

For $G^g \in \short$, a game reachable by some number, possibly zero, of moves from $G^g$ is called a subposition of $G^g$ as follows.
\begin{definition}
\label{def:subpos}
A game $H^h$ is called a \emph{subposition} of a game $G^g$ if
there exist $n \in \uint$ and a sequence $(G^{g_0}_0, G^{g_1}_1, \ldots, G^{g_n}_n)$ of games such that
$G^{g_0}_0 \cong G^g, G^{g_n}_n \cong H^{h}$, and $G^{g_0}_0 \owns G^{g_1}_1 \owns \cdots \owns G^{g_n}_n$.
\end{definition}
Note that a game $G^g$ is a subposition of $G^g$ itself.

\begin{example}
\begin{align}
\short_0 &= \{\es\} \times \bin = \{\es^0, \es^1\},\\
\short_1 &= 2^{\short_0} \times \bin
= \{\es, \{\es^0\}, \{\es^1\},  \{\es^0, \es^1\}\} \times \bin
= \{\es^0, \es^1, \{\es^0\}^0, \{\es^0\}^1, \{\es^1\}^0, \{\es^1\}^1, \{\es^0, \es^1\}^0, \{\es^0, \es^1\}^1\},\\
\short_2 &= 2^{\short_1} \times \bin \owns G^g \defeq\cong \{\es^1, \{\es^1\}^0, \{\es^1\}^1, \{\es^0, \es^1\}^0\}^1.
\end{align}
The subpositions of $G^g$ are $\es^0, \es^1, \{\es^1\}^0, \{\es^1\}^1, \{\es^0, \es^1\}^0$, and $G^g$.
\end{example}

Many proofs in this paper are carried out by induction on $\birth(G^g$), defined as follows.
\begin{definition}
For any $G^g \in \short$, we define a mapping $\birth \colon \short \to \uint$ as
\begin{align}
\birth(G^g) = 
\begin{cases}
0 &\,\,\text{if}\,\, G = \es,\\
1 + \max_{G'^{g'} \in G^g} \birth(G'^{g'}) &\,\,\text{if}\,\, G \neq \es
\end{cases}
\end{align}
for $G^g \in \short$.
\end{definition}
Note that for any $G^g, G'^{g'} \in \short$, if $G'^{g'} \in G^g$, then $\birth(G'^{g'}) < \birth(G^g)$ by the definition.

Also, the following definition is used to state theorems in this paper.
\begin{definition}
We define a mapping $\type \colon \short \to \{0, 1_{\exists 0}, 1_{\forall 1}\}$ as
\begin{align}
\type(G^g) =
\begin{cases}
0 &\,\,\text{if}\,\,g = 0,\\
1_{\exists 0} &\,\,\text{if}\,\,g = 1 \,\,\text{and}\,\, \exists G'^{g'} \in G^g \,\,\text{s.t.}\,\, g' = 0,\\
1_{\forall 1} &\,\,\text{if}\,\,g = 1 \,\,\text{and}\,\, \forall G'^{g'} \in G^g, g' = 1
\end{cases}
\label{eq:qftmhuw4xm1w}
\end{align}
for $G^g \in \short$.
\end{definition}

\subsubsection{The Outcome Classes}

We define the outcome classes for games with activeness modifying Definition \ref{def:imp-outcome}
so that an inactive game immediately ends and thus is in $\PP$ by the normal-play convention.

\begin{definition}
We define a mapping $o\colon \short \to \{\PP, \NN\}$ as
\begin{align}
o(G^g) \defeq{=}
\begin{cases}
\NN &\,\,\text{if}\,\, g = 1 \,\,\text{and}\,\, \exists G'^{g'} \in G^g \,\,\text{s.t.}\,\, o(G'^{g'}) = \PP,\\
\PP &\,\,\text{if}\,\, g = 0 \,\,\text{or}\,\, \forall G'^{g'} \in G^g, o(G'^{g'}) = \NN
\end{cases}
\label{eq:outcome}
\end{align}
for $G^g \in \short$.
By abusing notation, we define $\PP \defeq{=} \{G^g \in \short : o(G^g) = \PP\}$ and $\NN \defeq{=} \{G^g \in \short : o(G^g) = \NN\}$.
\end{definition}

Note that for any $G^g \in \short$, the following equivalences hold:
\begin{align}
G^g \in \NN &\iff g = 1 \,\,\text{and}\,\, G \not\subseteq \NN,
\end{align}
equivalently,
\begin{align}
G^g \in \PP &\iff g = 0 \,\,\text{or}\,\, G \subseteq \NN.  \label{eq:outcome2}
\end{align}

For simplicity, we write $G^{g_0g_1g_2\ldots g_n}$ for $\{ \cdots \{\{\{G^{g_0}\}^{g_1}\}^{g_2}\} \cdots \}^{g_n}$,
where $G^{g_0} \in \short$, $n \in \uint$, and $g_1, g_2, \ldots, g_n \in \bin$.

\begin{example}~
\label{ex:outcome}
\begin{enumerate}[(i)]
\item $o(\es^0) = o(\es^1) = \PP$ because $\es^0$ and $\es^1$ have no options (in particular, every option is in $\NN$).
\item $o(\{\es^0, \es^1\}^0) = \PP$ because $\{\es^0, \es^1\}^0$ is inactive.
\item $o(\es^{11}) = \NN$ because $\es^{11}$ is active and has an option $\es^{1} \in \PP$ by (i).
\item $o(\es^{111}) = \PP$ because the only option of $\es^{111}$ is $\es^{11} \in \NN$ by (iii).
\end{enumerate}
\end{example}

\subsubsection{Addition of Games}

The sum $G^g + H^h$ is defined to be active if and only if at least one of $G^g$ or $H^h$ is active.
We formally define the sum of games with activeness modifying Definition \ref{def:imp-sum} as follows.

\begin{definition}
For $G^g, H^h \in \short$, we define
\begin{align}
G^g + H^h \defeq\cong \left( \{G'^{g'}+H^h : G'^{g'} \in G^g\} \cup \{G^g+H'^{h'} : H'^{h'} \in H^h\} \right)^{g \lor h}, \label{eq:sum}
\end{align}
where $g \lor h \defeq= \max\{g, h\}$.
\end{definition}

\begin{example}
Consider the sum of $\{\es^0, \es^1\}^0$ and $\es^{001}$.
By the definition, we have
\begin{align}
\{\es^0, \es^1\}^0 + \es^{001} \cong \{ \es^0 + \es^{001}, \es^1 + \es^{001}, \{\es^0, \es^1\}^0 + \es^{00} \}^1. \label{eq:hgom2w4urs14}
\end{align}
Intuitively, $\{\es^0, \es^1\}^0 + \es^{001}$ is the active game in which a player makes a move on exactly one of $\{\es^0, \es^1\}^0$ and $\es^{001}$ in a turn.
Namely, the player has the following three options:
\begin{itemize}
\item the move to $\es^0 + \es^{001}$ playing on the first component $\{\es^0, \es^1\}^0$,
\item the move to $\es^1 + \es^{001}$ playing on the first component $\{\es^0, \es^1\}^0$,
\item the move to $ \{\es^0, \es^1\}^0 + \es^{00}$ playing on the second component $\es^{001}$.
\end{itemize}

Note that since $\{\es^0, \es^1\}^0$ is inactive, it ends immediately and is in $\PP$ when played alone;
however, when played as the sum with the active game $\es^{001}$, the sum does not end immediately and has the three options mentioned above, including the moves playing on the inactive component $\{\es^0, \es^1\}^0$.

The right-hand side of \eqref{eq:hgom2w4urs14} is further expanded to $\{ \es^{001}, \es^{111}, \{ \es^{00}, \es^{11}, \{\es^0, \es^1\}^0 \}^0 \}^1$
as seen later in Example \ref{ex:sum2}.
\end{example}

When restricted to games $G^g$ such that all subpositions of $G^g$ are active,
it never occurs that the game ends because all terms become inactive.
Hence, their behavior in terms of the outcome and sums is identical to that of games without activeness.
In this sense, all games without activeness are included in the set of games with activeness.
More formally, $G \in \imp$ can be identified as $f(G) \in \short$ by the mapping $f \colon \imp \to \short$ defined as
 $f(G) \defeq= \{f(G'^{g'}) : G' \in G\}^1$ for $G \in \imp$.
 Namely, our framework of games with activeness is a generalization of games without activeness.

Then, to what extent do games with activeness inherit properties of games without activeness?
First, the addition of games satisfies the following basic properties, and thus $(\short, +)$ is a commutative monoid with an identity $\es^0$.

\begin{theorem}~
\label{thm:monoid}
\begin{enumerate}[(i)]
\item For any $G^g, H^h \in \short$, we have $G^g+H^h \cong H^h+G^g$.
\item For any $G^g \in \short$, we have $G^g+\es^0 \cong G^g$.
\item For any $G^g, H^h, J^j \in \short$, we have $(G^g+H^h)+J^j \cong G^g+(H^h+J^j)$.
\end{enumerate}
\end{theorem}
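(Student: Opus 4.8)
The plan is to prove all three parts by induction on the total birthday $\birth(G^g) + \birth(H^h)$ (for (i) and (ii)) and $\birth(G^g)+\birth(H^h)+\birth(J^j)$ (for (iii)), mimicking the proof of Proposition \ref{prop:imp-monoid} but carefully tracking the activeness bits. The key observation that makes everything go through is that the activeness label of a sum, namely $g \lor h$, depends only on the labels of the immediate summands and that $\lor$ is itself commutative, has identity $0$, and is associative; so the bookkeeping on labels is trivially consistent with the bookkeeping on option-sets.

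For (i): expanding $G^g + H^h$ by \eqref{eq:sum} gives option-set $\{G'^{g'}+H^h : G'^{g'}\in G^g\} \cup \{G^g + H'^{h'} : H'^{h'}\in H^h\}$ with label $g\lor h$, and expanding $H^h + G^g$ gives $\{H'^{h'}+G^g : H'^{h'}\in H^h\}\cup\{H^h+G'^{g'} : G'^{g'}\in G^g\}$ with label $h\lor g$. The labels agree since $\lor$ is commutative ($\max$ is symmetric); each option $G'^{g'}+H^h$ of the first equals $H^h + G'^{g'}$ by the induction hypothesis, and symmetrically for the $H'^{h'}$ options, so the two option-sets coincide as sets. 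Hence $G^g+H^h \cong H^h+G^g$. For (ii): expanding $G^g + \es^0$ gives label $g\lor 0 = g$ and option-set $\{G'^{g'}+\es^0 : G'^{g'}\in G^g\} \cup \{G^g+H'^{h'} : H'^{h'}\in\es^0\}$; the second set is empty since $\es^0$ has no options, and each $G'^{g'}+\es^0 \cong G'^{g'}$ by induction, so the option-set is exactly $G$, and $G^g+\es^0 \cong G^g$. For (iii): expand $(G^g+H^h)+J^j$ and $G^g+(H^h+J^j)$ one level by \eqref{eq:sum}; the outer labels are $(g\lor h)\lor j$ and $g\lor(h\lor j)$, equal by associativity of $\max$. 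The option-set of each side splits into three families — moves in the $G$-part, the $H$-part, and the $J$-part — and the induction hypothesis applied to each triple of strictly smaller total birthday shows the corresponding options match; one must also note that e.g.\ a $G$-move option $(G'^{g'}+H^h)+J^j$ on the left and $G'^{g'}+(H^h+J^j)$ on the right are equal by induction, using that $\birth(G'^{g'}) < \birth(G^g)$. Collecting the three families shows the two option-sets coincide, giving $(G^g+H^h)+J^j \cong G^g+(H^h+J^j)$.

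I do not expect any genuine obstacle: the proof is a routine structural induction, and the only thing one might overlook is that the activeness bit must be checked to match at every level — but this is automatic because $g\lor h$ is a function of $g$ and $h$ alone and $\max$ is a commutative, associative operation with identity $0$. If anything deserves a sentence of care, it is the base-case/degenerate handling when one of the summands is $\es^0$ or $\es^1$ (so that one of the option-families is empty), which is subsumed by the inductive step once one observes the empty union contributes nothing.
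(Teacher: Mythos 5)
Your proposal is correct and follows essentially the same route as the paper: structural induction on the total birthday, one-level expansion via \eqref{eq:sum}, application of the induction hypothesis to each option (which has strictly smaller total birthday), and the observation that $\lor = \max$ is commutative, associative, and has identity $0$ so the activeness labels match automatically. No gaps.
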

\begin{proof}[Proof of Theorem \ref{thm:monoid}]
(Proof of (i))
We prove this by induction on $\birth(G^g) + \birth(H^h)$.  We have
\begin{align*}
G^g+H^h
&\eqlab{A}\cong \left( \{G'^{g'}+H^h : G'^{g'} \in G^g\} \cup \{G^g+H'^{h'} : H'^{h'} \in H^h\} \right)^{g \lor h}\\
&\eqlab{B}\cong \left( \{H^h + G'^{g'} : G'^{g'} \in G^g\} \cup \{H'^{h'} + G^g : H'^{h'} \in H^h\} \right)^{g \lor h}\\
&\cong \left( \{H'^{h'} + G^g : H'^{h'} \in H^h\} \cup \{H^h + G'^{g'} : G'^{g'} \in G^g\} \right)^{h \lor g}\\
&\eqlab{C}\cong H^h + G^g,
\end{align*}
where
(A) follows from the definition of $G^g + H^h$,
(B) follows from the induction hypothesis,
and (C) follows from the definition of $H^h + G^g$.

(Proof of (ii))
We prove this by induction on $\birth(G^g)$. We have
\begin{align*}
G^g+\es^0
&\eqlab{A}\cong \left( \{G'^{g'}+\es^0 : G'^{g'} \in G^g\} \cup \{G^g+H'^{h'} : H'^{h'} \in \es^0\} \right)^{g \lor 0}\\
&\cong \{G'^{g'}+\es^0 : G'^{g'} \in G^g\}^g\\
&\eqlab{B}\cong \{G'^{g'}: G'^{g'} \in G^g\}^g\\
&\cong G^g,
\end{align*}
where
(A) follows from the definition of $G^g + \es^0$,
and (B) follows from the induction hypothesis.

(Proof of (iii))
We prove this by induction on $\birth(G^g) + \birth(H^h) +\birth(J^j)$. 
We have
\begin{align}
(G^g+H^h)+J^j
&\eqlab{A}\cong \left( \left\{K^k+J^j : K^k \in (G^g+H^h) \right\}
\cup \left\{(G^g+H^h)+J'^{j'} : J'^{j'} \in J^j \right\} \right)^{(g \lor h) \lor j}\\
&\eqlab{B}\cong \Big( \left\{K^k+J^j : K^k \in \{G'^{g'}+H^h : G'^{g'} \in G^g\} \cup \{G^g+H'^{h'} : H'^{h'} \in H^h\}\right\} \\
&\qquad \cup \left\{(G^g+H^h)+J'^{j'} : J'^{j'} \in J^j \right\} \Big)^{g \lor h \lor j}\\
&\cong \Big( \left\{(G'^{g'}+H^h)+J^j : G'^{g'} \in G^g \right \} \cup \left \{(G^g+H'^{h'})+J^j : H'^{h'} \in H^h \right\}\\
&\qquad \cup \left\{(G^g+H^h)+J'^{j'} : J'^{j'} \in J^j \right\} \Big)^{g \lor h \lor j}, \label{eq:0ekni0lmollj}
\end{align}
where
(A) follows from the definition of $(G^g+H^h)+J^j$,
and (B) follows from the definition of $G^g+H^h$.
Similarly, we have
\begin{align}
G^g+(H^h+J^j)
&\cong \Big( \left\{G'^{g'}+(H^h+J^j) : G'^{g'} \in G^g \right\}
\cup \left\{G^g+(H'^{h'}+J^j) : H'^{h'} \in H^h \right\} \\
&\qquad \cup \left\{G^g+(H^h+J'^{j'}) : J'^{j'} \in J^j \right\} \Big)^{g \lor h \lor j}. \label{eq:uatrberlgux4}
\end{align}
The right-hand sides of \eqref{eq:0ekni0lmollj} and \eqref{eq:uatrberlgux4} are identical by the induction hypothesis.
\end{proof}

By Theorem \ref{thm:monoid} (iii), we can write $(G^g+H^h)+J^j$ (equivalently, $G^g+(H^h+J^j)$) as $G^g+H^h+J^j$ without ambiguity.

\begin{example}
\label{ex:sum2}
We have
\begin{align}
\es^1 + \es^{00}
\eqlab{A}\cong \{ \es^1 + \es^{0} \}^1
\eqlab{B}\cong \{ \es^1 \}^1
\cong \es^{11}, \label{eq:jq7gu4ead998}
\end{align}
where
(A) follows from the definition of the sum,
and (B) follows from Theorem \ref{thm:monoid} (ii).
Hence, we have
\begin{align} 
\es^1 + \es^{001}
\eqlab{A}\cong \{\es^1 + \es^{00}\}^1
\eqlab{B}\cong \{\es^{11}\}^1
\cong \es^{111}, \label{eq:wv9yg6iv6gi6}
\end{align}
where
(A) follows from the definition of the sum,
and (B) follows from \eqref{eq:jq7gu4ead998}.
Also, we have
\begin{align}
\{\es^0, \es^1\}^0 + \es^{00}
&\eqlab{A}\cong \{ \es^0+ \es^{00}, \es^1 + \es^{00}, \{\es^0, \es^1\}^0 + \es^{0} \}^0\\
&\eqlab{B}\cong \{ \es^{00}, \es^1 + \es^{00}, \{\es^0, \es^1\}^0 \}^0\\
&\eqlab{C}\cong \{ \es^{00}, \es^{11}, \{\es^0, \es^1\}^0 \}^0, \label{eq:tzviyaselwkv}
\end{align}
where
(A) follows from the definition of the sum,
(B) follows from Theorem \ref{thm:monoid} (ii),
and (C) follows from \eqref{eq:jq7gu4ead998}.
Therefore, 
\begin{align}
\{\es^0, \es^1\}^0 + \es^{001}
&\eqlab{A}\cong \{ \es^0 + \es^{001}, \es^1 + \es^{001}, \{\es^0, \es^1\}^0 + \es^{00} \}^1\\
&\eqlab{B}\cong \{ \es^{001}, \es^1 + \es^{001}, \{\es^0, \es^1\}^0 + \es^{00} \}^1\\
&\eqlab{C}\cong \{ \es^{001}, \es^{111}, \{ \es^{00}, \es^{11}, \{\es^0, \es^1\}^0 \}^0 \}^1,
\end{align}
where
(A) follows from the definition of the sum,
(B) follows from Theorem \ref{thm:monoid} (ii),
and (C) follows from \eqref{eq:wv9yg6iv6gi6} and \eqref{eq:tzviyaselwkv}.
\end{example}

In the case of games without activeness, the following statements (i)--(iii) hold.
\begin{enumerate}[(i)]
\item For any $G \in \imp$, we have $G + G \in \PP$.
\item For any $G, H \in \imp$, if $G \in \PP$ and $H \in \PP$, then $G + H \in \PP$.
\item For any $G, H, J \in \imp$, if $G + H \in \PP$ and $H + J\in \PP$, then $G + J \in \PP$.
\end{enumerate}
We consider whether the following analogous statements (i')--(iii') hold for games with activeness.
\begin{enumerate}[(i')]
\item For any $G^g \in \short$, it holds that $G^g + G^g \in \PP$.
\item For any $G^g, H^h \in \short$, if $G^g \in \PP$ and $H^h \in \PP$, then $G^g + H^h \in \PP$.
\item For any $G^g, H^h, J^j \in \short$, if $G^g + H^h \in \PP$ and $H^h + J^j\in \PP$, then $G^g + J^j \in \PP$.
\end{enumerate}
The statements (ii') and (iii') are false because of the following counterexamples:
$G^g \defeq\cong \es^{00}, H^h \defeq\cong \es^1$ for (ii');
$G^g \defeq\cong \es^1, H^h \defeq\cong \es^0, J^j \defeq\cong \es^{00}$ for (iii').
On the other hand, the statement (i') holds true in a slightly stronger form as follows.

\begin{theorem}
\label{prop:twice}
For any $G^g \in \short$ and $b \in \bin$, we have $o(G^g+G^g+\es^b) =\PP$.
\end{theorem}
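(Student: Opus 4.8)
The plan is to prove $o(G^g+G^g+\es^b)=\PP$ by induction on $\birth(G^g)$, and within that induction to argue directly via the second-player strategy encoded in the recursive definition \eqref{eq:outcome2} of $\PP$. By definition, $o(G^g+G^g+\es^b)=\PP$ holds iff either $G^g+G^g+\es^b$ is inactive, or every option of $G^g+G^g+\es^b$ lies in $\NN$. First I would dispose of the inactive case: if $g=0$ and $b=0$, then $G^g+G^g+\es^b$ has activeness $g\lor g\lor b = 0$, so it is inactive and hence in $\PP$ immediately. So the remaining case is that at least one of $g,b$ equals $1$, i.e.\ the sum is active, and I must show every option is in $\NN$.

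Next I would enumerate the options. By the definition of the sum (and associativity, Theorem~\ref{thm:monoid}(iii)), every option of $G^g+G^g+\es^b$ is of one of three shapes: (a) $G'^{g'}+G^g+\es^b$ for some $G'^{g'}\in G^g$ (a move in the first copy), (b) $G^g+G'^{g'}+\es^b$ for some $G'^{g'}\in G^g$ (a move in the second copy), and (c) $G^g+G^g+\es'$ for some option $\es'$ of $\es^b$ — but $\es^b$ has an option only when $b$ corresponds to a nonempty set, which never happens since $\es$ is the empty set, so there are no options of type (c). Thus every option is of type (a) or (b), and by commutativity (Theorem~\ref{thm:monoid}(i)) these two families coincide, so it suffices to show $G'^{g'}+G^g+\es^b \in \NN$ for every $G'^{g'}\in G^g$. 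To show a game is in $\NN$, by \eqref{eq:outcome} I need it to be active and to have at least one option in $\PP$. It is active because it contains the copy $G^g$ with — wait, $g$ need not be $1$; but it contains $G'^{g'}$ and also $\es^b$, and more to the point the activeness of $G'^{g'}+G^g+\es^b$ is $g'\lor g\lor b$; I should instead note that the responding move described next produces a $\PP$-position and that the game is active because a move is available. Actually the cleaner route: exhibit the option $G'^{g'}+G'^{g'}+\es^b$, obtained by making the ``mirror'' move in the other copy of $G^g$; this is an option of $G'^{g'}+G^g+\es^b$, it equals $G'^{g'}+G'^{g'}+\es^b$ after using commutativity/associativity, and by the induction hypothesis (since $\birth(G'^{g'})<\birth(G^g)$) it lies in $\PP$. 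Hence $G'^{g'}+G^g+\es^b$ has a $\PP$-option, so it is in $\NN$ provided it is active — and it is active, because it has an option, which by \eqref{eq:outcome} forces... no: having an option does not force activeness. I must separately check activeness: $G'^{g'}+G^g+\es^b$ has activeness $g'\lor g\lor b$. Since $G'^{g'}\in G^g$ and $G^g$ is active-or-not, hmm — but if $g=0$ then $G^g$ has no options at all unless... actually $G^g$ can be inactive yet nonempty. So I cannot conclude $g'=1$. However, if $g=1\lor b=1$ we still may have $g'=g=b=0$ is impossible when $b=1$; and when $b=0,g=1$ we could have $g'=0$. In that subcase $G'^{g'}+G^g+\es^b$ has activeness $0\lor 1\lor 0=1$, active, fine. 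When $b=1$, activeness is $1$, fine. So in every case where the overall sum is active, each option $G'^{g'}+G^g+\es^b$ has activeness $g'\lor g\lor b\geq g\lor b = 1$, hence is active. Good — that closes the argument.

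The main obstacle I anticipate is purely bookkeeping: carefully justifying that the ``mirror move'' option $G'^{g'}+G'^{g'}+\es^b$ is genuinely an element of $G'^{g'}+G^g+\es^b$ (this needs one application of the definition of the sum together with commutativity and associativity from Theorem~\ref{thm:monoid}), and tracking the activeness tags through these rewrites so that the induction hypothesis $o(G'^{g'}+G'^{g'}+\es^b)=\PP$ applies with the same $b$. There is no deep combinatorial difficulty — the symmetry strategy ``copy the opponent's move into the other $G^g$'' is the obvious one and is exactly the classical proof of $G+G\in\PP$ — but the presence of the activeness bit means I must confirm at each step that a responding move is always available (which it is, precisely because the opponent just made a move in one copy, so the mirror move in the other copy exists) and that the resulting position is again of the form covered by the induction hypothesis. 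I would organize the write-up as: (1) reduce to the active case; (2) reduce, via Theorem~\ref{thm:monoid}, to showing $G'^{g'}+G^g+\es^b\in\NN$ for all $G'^{g'}\in G^g$; (3) verify such a position is active; (4) exhibit its $\PP$-option via the induction hypothesis.
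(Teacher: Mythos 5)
Your proof is correct and follows essentially the same route as the paper's: induction on $\birth(G^g)$, splitting on whether $g\lor b=0$ or $1$, reducing by symmetry to options of the form $G'^{g'}+G^g+\es^b$, checking activeness via $g'\lor g\lor b\geq g\lor b=1$, and exhibiting the mirror option $G'^{g'}+G'^{g'}+\es^b\in\PP$ by the induction hypothesis. The only difference is presentational: your write-up retains some exploratory detours (e.g.\ the false start about activeness following from having an option) that should be cleaned out of a final version.
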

\begin{proof}[Proof of Theorem \ref{prop:twice}]
We prove this by induction on $\birth(G^g)$.
We consider the following two cases separately: the case $g \lor b = 0$ and the case $g  \lor b = 1$.
\begin{itemize}
\item The case $g \lor b = 0$: We have $g \lor g \lor b = g \lor b = 0$, so that $o(G^g + G^g + \es^b) = \PP$.
\item The case $g  \lor b = 1$:
We have $g \lor g \lor b = g \lor b = 1$.
To prove $o(G^g+G^g+\es^b) =\PP$, it remains to show that every option of $G^g+G^g+\es^b$ is in $\NN$.
Without loss of generality, it suffices to consider only options in the form $G'^{g'}+G^g+\es^b$ for some $G'^{g'} \in G^g$.
Then we have $o(G'^{g'} + G^g + \es^b) = \NN$ because
$g' \lor g \lor b \geq g \lor b = 1$ and
\begin{align}
G'^{g'} + G^g + \es^b \owns G'^{g'} + G'^{g'} + \es^b \eqlab{A}\in \PP,
\end{align}
where (A) follows from the induction hypothesis.
\end{itemize}
\end{proof}

\subsubsection{Nimbers and Grundy Numbers with Activeness}

In this section, we consider games with activeness corresponding to nimbers.
Unlike the case of games without activeness, there is room to freely define the activeness of the $n$-th nimber for each $n \in \uint$.
Therefore, we introduce a sequence $\bm{\gamma} \in \bin^{\infty}$ that specifies the activeness for each $n \in \uint$,
and we define nimbers for each sequence $\bm{\gamma}$ as the next definition,
where $\bin^{\infty}$ denotes the set of all infinite sequences over $\bin$.

\begin{definition}
\label{def:nimber}
Let $\bm{\gamma} = (\gamma_0, \gamma_1, \gamma_2, \ldots, ) \in \bin^{\infty}$.
We define $\st^{\bm{\gamma}} i \defeq{\cong} \{\st^{\bm{\gamma}} j : 0 \leq j < i\}^{\gamma_i}$ for $i \in \uint$ recursively.
\end{definition}

Note that even if $\bm{\gamma} \defeq = (\gamma_0, \gamma_1, \gamma_2, \ldots)$ and
$\bm{\gamma}' \defeq = (\gamma'_0, \gamma'_1, \gamma'_2, \ldots)$ are not equal,
if the first $(n+1)$ elements of them are identical (i.e., $(\gamma_0, \gamma_1, \ldots, \gamma_n) = (\gamma'_0, \gamma'_1, \ldots, \gamma'_n)$),
then we have $\st^{\bm{\gamma}} n \cong \st^{\bm{\gamma}'} n$.

We next consider a concept corresponding to Grundy numbers.
The Grundy number $\grundy(G)$ of a game without activeness $G$ is the unique integer $n \in \uint$ such that $o(G + \st n) = \PP$.
Based on that, we generalize Grundy numbers to games with activeness.
However, it is not necessarily the case that there exists exactly one $n \in \uint$ with $o(G^g + \st^{\bm{\gamma}} n) = \PP$,
and thus we define the object corresponding to Grundy numbers as the set of all $n \in \uint$ with $o(G^g + \st^{\bm{\gamma}} n) = \PP$ as follows.

\begin{definition}
\label{def:grundy-g}
For $\bm{\gamma} \in \bin^{\infty}$ and $G^g \in \short$,
we define a mapping $\grundy^{\bm{\gamma}} \colon \short \to 2^{\uint}$ as
\begin{align}
\grundy^{\bm{\gamma}}(G^g) \defeq= \{n \in \uint : o(G^g + \st^{\bm{\gamma}} n) = \PP\}.
\end{align}
\end{definition}

Then we have the following recursive formula on $\grundy^{\bm{\gamma}}$ corresponding to \eqref{eq:ofe48aki7qcy}.

\begin{theorem}
\label{thm:grundy}
For any $\bm{\gamma} = (\gamma_0, \gamma_1, \gamma_2, \ldots, ) \in \bin^{\infty}$ and $G^g \in \short$, we have
\begin{align}
\grundy^{\bm{\gamma}}(G^g) = \{n \in \uint : g \lor \gamma_n = 0\} \cup 
\left\{ n \in \uint : \{0, 1, 2, \ldots, n-1\} \subseteq U, n \not\in U \right\}, \label{eq:awmlfqha8vf1}
\end{align}
where
\begin{align}
\label{eq:yi5wdg1pvkgv}
U \defeq= \bigcup_{G'^{g'} \in G^g} \grundy^{\bm{\gamma}}(G'^{g'}).
\end{align}
\end{theorem}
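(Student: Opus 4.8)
The plan is to unfold the definition of $\grundy^{\bm{\gamma}}(G^g)$ directly: a non-negative integer $n$ lies in $\grundy^{\bm{\gamma}}(G^g)$ iff $o(G^g + \st^{\bm{\gamma}} n) = \PP$, so I would fix $n$ and analyze the outcome of the sum $G^g + \st^{\bm{\gamma}} n$ using the characterization \eqref{eq:outcome2}, namely $o(G^g+\st^{\bm{\gamma}} n) = \PP$ iff the sum is inactive or every option of the sum is in $\NN$. The sum $G^g+\st^{\bm{\gamma}} n$ has activeness bit $g \lor \gamma_n$, and its options are of two kinds: $G'^{g'} + \st^{\bm{\gamma}} n$ for $G'^{g'} \in G^g$, and $G^g + \st^{\bm{\gamma}} j$ for $0 \le j < n$. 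This immediately splits the analysis into the two cases that appear on the right-hand side of \eqref{eq:awmlfqha8vf1}.

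First I would dispose of the inactive case: if $g \lor \gamma_n = 0$ then the sum is inactive, hence in $\PP$, so $n \in \grundy^{\bm{\gamma}}(G^g)$; this yields the first set $\{n : g \lor \gamma_n = 0\}$. For the remaining case $g \lor \gamma_n = 1$, the sum is active, so $o(G^g+\st^{\bm{\gamma}} n) = \PP$ iff every option is in $\NN$. For the options of the second kind, $o(G^g + \st^{\bm{\gamma}} j) = \NN$ is equivalent to $j \notin \grundy^{\bm{\gamma}}(G^g)$; for the options of the first kind, $o(G'^{g'}+\st^{\bm{\gamma}} n) = \NN$ is equivalent to $n \notin \grundy^{\bm{\gamma}}(G'^{g'})$, i.e. $n \notin U$ once we take the union over all $G'^{g'} \in G^g$. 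So in this case $n \in \grundy^{\bm{\gamma}}(G^g)$ iff $n \notin U$ and $\{0,1,\dots,n-1\} \cap \grundy^{\bm{\gamma}}(G^g) = \es$.

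The subtlety — and the step I expect to be the main obstacle — is that the condition just derived refers to $\grundy^{\bm{\gamma}}(G^g)$ itself (through the requirement that no $j < n$ be in it), so the characterization is not yet in closed form; I need to show it is equivalent to $\{0,1,\dots,n-1\} \subseteq U$. I would handle this by a secondary induction on $n$ (the outer induction being on $\birth(G^g)$, which licenses the rewriting of options of the first kind via $\grundy^{\bm{\gamma}}(G'^{g'})$). Concretely, I claim: for every $n$ with $g \lor \gamma_n = 1$, $n \in \grundy^{\bm{\gamma}}(G^g)$ iff $\{0,\dots,n-1\} \subseteq U$ and $n \notin U$; and this, combined with the first case, shows that for every $j < n$, $j \in \grundy^{\bm{\gamma}}(G^g)$ iff ($g \lor \gamma_j = 0$, which forces $j \in U$ trivially? — no) ... more carefully: I would prove by induction on $n$ that $\{0,1,\dots,n-1\} \subseteq \grundy^{\bm{\gamma}}(G^g)$ iff $\{0,1,\dots,n-1\} \subseteq U$, using at each step that $j \in \grundy^{\bm{\gamma}}(G^g)$ is equivalent to [$g\lor\gamma_j = 0$] or [$\{0,\dots,j-1\}\subseteq U$ and $j\notin U$], and checking that each of these two disjuncts, given $\{0,\dots,j-1\}\subseteq U$, is equivalent to $j \in U$. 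The delicate point is verifying that $g \lor \gamma_j = 0 \Rightarrow j \in U$; I expect this follows because if $g = 0$ then every $G'^{g'} \in G^g$ also has... actually it requires that an inactive $G^g$ forces $j$ into some $\grundy^{\bm{\gamma}}(G'^{g'})$, which should come from $\st^{\bm{\gamma}} j$ being such that $o(G'^{g'} + \st^{\bm{\gamma}} j) = \PP$ in the relevant subcase — pinning down exactly this implication is where the real care is needed, and I would isolate it as the crux of the argument. Once that equivalence is established, substituting it back converts the self-referential condition into $\{0,\dots,n-1\}\subseteq U$, and combining the two cases gives precisely \eqref{eq:awmlfqha8vf1}.
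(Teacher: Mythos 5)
Your reduction is sound up to and including the characterization ``for $g \lor \gamma_n = 1$, $n \in \grundy^{\bm{\gamma}}(G^g)$ iff $n \notin U$ and $\{0,1,\ldots,n-1\} \cap \grundy^{\bm{\gamma}}(G^g) = \es$,'' and the case split on $g \lor \gamma_n$ together with the inner induction on $n$ is the same skeleton the paper uses. The last step is where the proposal breaks, and not merely because you left it unfinished. First, a bookkeeping slip: what must be related to $\{0,\ldots,n-1\} \subseteq U$ is $\{0,\ldots,n-1\} \cap \grundy^{\bm{\gamma}}(G^g) = \es$ (every move in the nimber component must hand the opponent an $\NN$-position), not $\{0,\ldots,n-1\} \subseteq \grundy^{\bm{\gamma}}(G^g)$; so the equivalence you need for each $j < n$ is ``$j \notin \grundy^{\bm{\gamma}}(G^g)$ iff $j \in U$,'' and since $g \lor \gamma_j = 0$ forces $j \in \grundy^{\bm{\gamma}}(G^g)$ (the sum $G^g + \st^{\bm{\gamma}} j$ is inactive, hence in $\PP$), the implication you actually need is $g \lor \gamma_j = 0 \implies j \notin U$ --- the reverse of the one you flagged as the crux. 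Second, and decisively, that implication is false, and with it the asserted formula: take $G^g \cong \{\es^1\}^0$ and $\bm{\gamma} = (0,1,1,1,\ldots)$. Then $\grundy^{\bm{\gamma}}(\es^1) = \{0\}$, so $U = \{0\}$ and the right-hand side of \eqref{eq:awmlfqha8vf1} equals $\{0\} \cup \{1\}$; yet $G^g + \st^{\bm{\gamma}} 1$ is active and has the option $G^g + \st^{\bm{\gamma}} 0 \cong G^g \in \PP$, so $o(G^g + \st^{\bm{\gamma}} 1) = \NN$ and $1 \notin \grundy^{\bm{\gamma}}(G^g)$.

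So the gap you isolated cannot be filled as stated. For calibration, the paper's own proof glosses over exactly the same point: in the case $n \in \tilde{U}$ it deduces $n' \notin \grundy^{\bm{\gamma}}(G^g)$ from $n' \notin \tilde{U}$ ``by the induction hypothesis,'' but its statement (ii) carries the precondition $g \lor \gamma_{n'} = 1$, and when $g \lor \gamma_{n'} = 0$ its statement (i) gives the opposite conclusion. The formula (and both arguments) are fine whenever no $j$ with $g \lor \gamma_j = 0$ lies below $\mex U$ --- in particular whenever $g = 1$ or $\bm{\gamma}$ is constant, which covers Theorem \ref{thm:grundy-01} and the later applications. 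A correct general statement replaces the second set in \eqref{eq:awmlfqha8vf1} by $\left\{ n \in \uint : \{0, \ldots, n-1\} \subseteq U,\ n \notin U,\ \text{and } g \lor \gamma_j = 1 \text{ for all } j < n \right\}$; your derivation, with the intersection-versus-containment slip repaired, proves exactly that.
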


Note that $\tilde{U} \defeq=\left\{ n \in \uint : \{0, 1, 2, \ldots, n-1\} \subseteq U, n \not\in U \right\}$ in \eqref{eq:awmlfqha8vf1}
has at most one element, and if $\tilde{U}$ has an element, then the unique element is equal to $\mex U$.

\begin{proof}[Proof of Theorem \ref{thm:grundy}]
Define
\begin{align}
\tilde{U} \defeq= \left\{ n' \in \uint : \{0, 1, 2, \ldots, n'-1\} \subseteq U, n' \not\in U \right\}. \label{eq:3y2p544cdlxt}
\end{align}
To prove the theorem, it suffices to show that the following statements (i) and (ii) hold.
\begin{enumerate}[(i)]
\item For any $n \in \uint$, if $g \lor \gamma_n = 0$, then $n \in \grundy^{\bm{\gamma}}(G^g)$.
\item For any $n \in \uint$, if $g \lor \gamma_n = 1$, then the following equivalence holds: $n \in \tilde{U} \iff n \in \grundy^{\bm{\gamma}}(G^g)$.
\end{enumerate}
The statement (i) holds since the assumption $g \lor \gamma_n = 0$ directly implies $o(G^g + \st^{\bm{\gamma}} n) = \PP$.
We show (ii) by induction on $n \in \uint$.
We consider the following two cases separately: the case $n \in \tilde{U}$ and the case $n \not\in \tilde{U}$.

\begin{itemize}
\item The case $n \in \tilde{U}$:
We have $o(G^g + \st^{\bm{\gamma}} n) = \PP$ so that $n \in \grundy^{\bm{\gamma}}(G^g)$ because
every option of $G^g + \st^{\bm{\gamma}} n$ is in $\NN$ as follows.
\begin{itemize}
\item For any $G'^{g'} \in G^g$, the option $G'^{g'} + \st^{\bm{\gamma}} n$ is in $\NN$ because
$n \in \tilde{U}$ implies $n \not\in U$ by \eqref{eq:3y2p544cdlxt},
which leads to $n \not\in \grundy^{\bm{\gamma}}(G'^{g'})$ by \eqref{eq:yi5wdg1pvkgv}.
\item For any $n' \in \{0, 1, 2, \ldots, n-1\}$, the option $G^{g} + \st^{\bm{\gamma}} n'$ is in $\NN$
because the assumption $n \in \tilde{U}$ and \eqref{eq:3y2p544cdlxt} lead to $n' \in  \{0, 1, 2, \ldots, n-1\} \subseteq U$ and thus $n' \not\in \tilde{U}$, which is equivalent to $n' \not\in \grundy^{\bm{\gamma}}(G^g)$ by the induction hypothesis.
Note that $n' \not\in \grundy^{\bm{\gamma}}(G^g)$ is equivalent to $o(G^{g} + \st^{\bm{\gamma}} n') = \NN$ as desired.
\end{itemize}

\item The case $n \not\in \tilde{U}$:
By \eqref{eq:3y2p544cdlxt}, the following two cases are possible: the case where $n' \not\in U$ for some $n' \in \{0, 1, 2, \ldots, n-1\}$,
and the case $n \in U$.
To prove $n \not\in \grundy^{\bm{\gamma}}(G^g)$ (equivalently, $o(G^g + \st^{\bm{\gamma}} n) = \NN$),
we show that $G^g + \st^{\bm{\gamma}} n$ has an option in $\PP$ for the two cases as follows.
\begin{itemize}
\item The case where $n' \not\in U$ for some $n' \in \{0, 1, 2, \ldots, n-1\}$:
Then $m \defeq= \mex U$ is well-defined, and $m \leq n' < n$ holds.
Since $m \in \tilde{U}$ by \eqref{eq:3y2p544cdlxt}, we have $m \in \grundy^{\bm{\gamma}}(G^g)$ by induction hypothesis.
Therefore, the option $G^g + \st^{\bm{\gamma}} m$ is in $\PP$.

\item The case $n \in U$:
Then there exists $G'^{g'} \in G^g$ such that $n \in \grundy^{\bm{\gamma}}(G'^{g'})$ by \eqref{eq:yi5wdg1pvkgv},
so that the option $G'^{g'} + \st^{\bm{\gamma}} n$ is in $\PP$.
\end{itemize}
\end{itemize}
\end{proof}

Let $\bm{0} \defeq= (0, 0, 0, \ldots ) \in \bin^{\infty}$ and $\bm{1} \defeq= (1, 1, 1, \ldots ) \in \bin^{\infty}$.
In the particular case $\bm{\gamma} = \bm{0}$ or $\bm{\gamma} = \bm{1}$,
Theorem \ref{thm:grundy} deduces the following formulas.

\begin{theorem}
\label{thm:grundy-01}
For any $G^g \in \short$, the following statements (i) and (ii) hold.
\begin{enumerate}[(i)]
\item $\grundy^{\bm{1}}(G^g) = \{\mex \bigcup_{G'^{g'} \in G^g} \grundy^{\bm{1}}(G'^{g'})\}$.
\item \begin{align}
\grundy^{\bm{0}}(G^g)
= \begin{cases}
\uint &\,\,\text{if}\,\, \type(G^g) = 0,\\
\es &\,\,\text{if}\,\, \type(G^g) = 1_{\exists 0},\\
\left\{\mex \displaystyle\bigcup_{\substack{G'^{g'} \in G^g,\\ \type(G'^{g'}) = 1_{\forall 1}}} \grundy^{\bm{0}}(G'^{g'}) \right\} &\,\,\text{if}\,\, \type(G^g) = 1_{\forall 1}.\\
\end{cases}
\end{align}
\end{enumerate}
\end{theorem}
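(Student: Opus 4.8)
The plan is to obtain both identities directly from Theorem \ref{thm:grundy}: I would substitute the prescribed sequence $\bm{\gamma}$ into \eqref{eq:awmlfqha8vf1}, simplify the quantity $g \lor \gamma_n$, and run an induction on $\birth(G^g)$ to determine the sets $\grundy^{\bm{\gamma}}(G'^{g'})$ attached to the options of $G^g$. Throughout put $U \defeq= \bigcup_{G'^{g'} \in G^g} \grundy^{\bm{\gamma}}(G'^{g'})$ and $\tilde U \defeq= \{n \in \uint : \{0,1,\ldots,n-1\} \subseteq U,\ n \notin U\}$, so that \eqref{eq:awmlfqha8vf1} reads $\grundy^{\bm{\gamma}}(G^g) = \{n \in \uint : g \lor \gamma_n = 0\} \cup \tilde U$. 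I will use the remark after Theorem \ref{thm:grundy} in a slightly sharpened form: $\tilde U$ has at most one element; if $U \neq \uint$ then $\mex U \in \tilde U$, so $\tilde U = \{\mex U\}$; and if $U = \uint$ then $\tilde U = \es$. I also use that every game has only finitely many options, so a union over the options of $G^g$ of finitely many finite sets is finite, hence $\neq \uint$.

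For (i), take $\bm{\gamma} = \bm{1}$. Then $g \lor \gamma_n = 1$ for every $n$, so the first set in \eqref{eq:awmlfqha8vf1} is empty and $\grundy^{\bm{1}}(G^g) = \tilde U$. By the induction hypothesis each $\grundy^{\bm{1}}(G'^{g'})$ with $G'^{g'} \in G^g$ is a singleton, so $U$ is finite and $\tilde U = \{\mex U\}$, which is the asserted formula; in particular $\grundy^{\bm{1}}(G^g)$ is again a singleton, so the induction goes through, its base case $G = \es$ being just the instance with no options.

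For (ii), take $\bm{\gamma} = \bm{0}$. Then $g \lor \gamma_n = g$ for every $n$, and the first set in \eqref{eq:awmlfqha8vf1} is $\uint$ if $g = 0$ and $\es$ if $g = 1$; I then split on $\type(G^g)$. If $\type(G^g) = 0$, i.e.\ $g = 0$, the first set already equals $\uint$, so $\grundy^{\bm{0}}(G^g) = \uint$. If $\type(G^g) = 1_{\exists 0}$, fix an option $G'^{g'}$ with $g' = 0$; then $\type(G'^{g'}) = 0$, so the induction hypothesis gives $\grundy^{\bm{0}}(G'^{g'}) = \uint$, whence $U = \uint$, $\tilde U = \es$, and $\grundy^{\bm{0}}(G^g) = \es \cup \es = \es$. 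If $\type(G^g) = 1_{\forall 1}$, then every option $G'^{g'}$ has $g' = 1$, so $\type(G'^{g'})$ equals $1_{\exists 0}$ or $1_{\forall 1}$; by the induction hypothesis $\grundy^{\bm{0}}(G'^{g'}) = \es$ in the first case and is a singleton in the second. Hence the options of type $1_{\exists 0}$ contribute nothing to the union, so $U = V$, where $V \defeq= \bigcup_{G'^{g'} \in G^g,\ \type(G'^{g'}) = 1_{\forall 1}} \grundy^{\bm{0}}(G'^{g'})$ is a finite union of singletons, hence finite; therefore $\tilde U = \{\mex U\} = \{\mex V\}$, and since the first set is $\es$ we get $\grundy^{\bm{0}}(G^g) = \{\mex V\}$, as claimed (again a singleton, closing the induction).

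This is essentially bookkeeping on top of Theorem \ref{thm:grundy}, so I do not foresee a serious difficulty. The one delicate point is the $1_{\forall 1}$ case, where I must notice that options of type $1_{\exists 0}$ drop out of the union so that $U = V$, and that $U$ is finite so $\mex U$ is defined and $\tilde U = \{\mex U\}$; a minor bookkeeping point is to confirm that the vacuous situations — a game with no options, and $g = 1$ making the first set empty — are handled uniformly by the case split on $\type(G^g)$, which they are.
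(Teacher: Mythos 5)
Your proof is correct and follows essentially the same route as the paper: specialize Theorem \ref{thm:grundy} to $\bm{\gamma}=\bm{1}$ and $\bm{\gamma}=\bm{0}$, note that the set $\{n \in \uint : g \lor \gamma_n = 0\}$ collapses to $\es$ or $\uint$, and induct on $\birth(G^g)$ with the same case split on $\type(G^g)$, including the key observation that options of type $1_{\exists 0}$ contribute $\es$ (rather than $\uint$) to the union in the $1_{\forall 1}$ case. The only cosmetic difference is that you make explicit the finiteness of the option set and the dichotomy $U = \uint$ versus $U \neq \uint$, which the paper leaves implicit.
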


\begin{proof}[Proof of Theorem \ref{thm:grundy-01}]
(Proof of (i))
We prove this by induction on $\birth(G^g)$.
For any $G'^{g'} \in G^g$, we have $|\grundy^{\bm{1}}(G'^{g'})| = |\{\mex \bigcup_{G''^{g''} \in G'^{g'}} \grundy^{\bm{1}}(G''^{g''})\}| = 1$ by the induction hypothesis.
Hence, 
\begin{align}
U \defeq= \bigcup_{G'^{g'} \in G^g} \grundy^{\bm{1}}(G'^{g'}), \label{eq:1qlaqu5xdios}
\end{align}
is a finite set, so that $\mex U$ is well-defined, and
\begin{align}
\left\{ n \in \uint : \{0, 1, 2, \ldots, n-1\} \subseteq U, n \not\in U \right\} = \{\mex U\}. \label{eq:z9hc2lbkw6k9}
\end{align}
Therefore,
\begin{align}
\grundy^{\bm{1}}(G^g)
&\eqlab{A}=  \{n \in \uint : g \lor 1 = 0\} \cup \left\{ n \in \uint : \{0, 1, 2, \ldots, n-1\} \subseteq U, n \not\in U \right\}\\
&\eqlab{B}=  \es \cup \left\{ n \in \uint : \{0, 1, 2, \ldots, n-1\} \subseteq U, n \not\in U \right\}\\
&\eqlab{C}=  \{ \mex U\}\\
&\eqlab{D}= \left\{\mex \bigcup_{G'^{g'} \in G^g} \grundy^{\bm{1}}(G'^{g'})\right\},
\end{align}
where
(A) follows from Theorem \ref{thm:grundy},
(B) follows from $g \lor 1 = 1 \neq 0$,
(C) follows from \eqref{eq:z9hc2lbkw6k9},
and (D) follows from \eqref{eq:1qlaqu5xdios}.

(Proof of (ii))
Define
\begin{align}
U \defeq= \bigcup_{G'^{g'} \in G^g} \grundy^{\bm{0}}(G'^{g'}). \label{eq:4rx21mh99vdu}
\end{align}
We consider the following three cases separately:
the case $\type(G^g) = 0$, the case $\type(G^g) = 1_{\exists 0}$, and the case $\type(G^g) = 1_{\forall 1}$.
\begin{itemize}
\item The case $\type(G^g) = 0$: We have
\begin{align}
\grundy^{\bm{0}}(G^g)
\eqlab{A}\supseteq \{n \in \uint : g \lor 0 = 0\}
\eqlab{B}= \{n \in \uint : 0 \lor 0 = 0\}
= \uint,
\end{align}
where
(A) follows from Theorem \ref{thm:grundy},
and (B) follows from $\type(G^g) = 0$.

\item The case $\type(G^g) = 1_{\exists 0}$:
Then there exists $G'^{g'} \in G^g$ such that $g' = 0$, so that
\begin{align}
U \eqlab{A}= \bigcup_{\hat{G}^{\hat{g}} \in G^g} \grundy^{\bm{0}}(\hat{G}^{\hat{g}})
\supseteq\grundy^{\bm{0}}(G'^{g'})
\eqlab{B}= \uint, \label{eq:ubxf5ih92648}
\end{align}
where
(A) follows from \eqref{eq:4rx21mh99vdu},
and (B) follows from the case $\type(G^g) = 0$ shown above since $\type(G'^{g'}) = 0$.
Hence, we obtain
\begin{align}
\grundy^{\bm{0}}(G^g)
&\eqlab{A}=  \{n \in \uint : g \lor 0 = 0\} \cup \left\{ n \in \uint : \{0, 1, 2, \ldots, n-1\} \subseteq U, n \not\in U \right\}\\
&\eqlab{B}=  \es \cup \left\{ n \in \uint : \{0, 1, 2, \ldots, n-1\} \subseteq U, n \not\in U \right\}\\
&\eqlab{C}=  \left\{ n \in \uint : \{0, 1, 2, \ldots, n-1\} \subseteq \uint, n \not\in \uint \right\}\\
&= \es,
\end{align}
where
(A) follows from Theorem \ref{thm:grundy},
(B) follows from $g \lor 0 = 1 \lor 0 = 1 \neq 0$,
and (C) follows from \eqref{eq:ubxf5ih92648}.

\item The case $\type(G^g) = 1_{\forall 1}$:
We prove this by induction on $\birth(G^g)$.
We have
\begin{align}
\hspace{-20pt}
U \eqlab{A}= \bigcup_{G'^{g'} \in G^g} \grundy^{\bm{0}}(G'^{g'})
\eqlab{B}= \left(\bigcup_{\substack{G'^{g'} \in G^g,\\ \type(G'^{g'}) = 1_{\exists 0}}} \grundy^{\bm{0}}(G'^{g'})\right) \cup \left(\bigcup_{\substack{G'^{g'} \in G^g,\\ \type(G'^{g'}) = 1_{\forall 1}}} \grundy^{\bm{0}}(G'^{g'})\right)
\eqlab{C}= \bigcup_{\substack{G'^{g'} \in G^g,\\ \type(G'^{g'}) = 1_{\forall 1}}} \grundy^{\bm{0}}(G'^{g'}),
\label{eq:59uybh2r8r63}
\end{align}
where
(A) follows from \eqref{eq:4rx21mh99vdu},
(B) follows from $\type(G^g) = 1_{\forall 1}$,
and (C) follows from the case $\type(G^g) = 1_{\exists 0}$ shown above.
In the right-hand side of \eqref{eq:59uybh2r8r63}, we have $|\grundy^{\bm{0}}(G'^{g'})| = 1$ for every $G'^{g'}$ by the induction hypothesis.
Hence, $U$ is finite, so that $\mex U$ is well-defined, and
\begin{align}
\left\{ n \in \uint : \{0, 1, 2, \ldots, n-1\} \subseteq U, n \not\in U \right\} = \{\mex U\}. \label{eq:ijviog0x2jp2}
\end{align}
Therefore,
\begin{align}
\grundy^{\bm{0}}(G^g)
&\eqlab{A}=  \{n \in \uint : g \lor 0 = 0\} \cup \left\{ n \in \uint : \{0, 1, 2, \ldots, n-1\} \subseteq U, n \not\in U \right\}\\
&\eqlab{B}=  \es \cup \left\{ n \in \uint : \{0, 1, 2, \ldots, n-1\} \subseteq U, n \not\in U \right\}\\
&\eqlab{C}=  \{ \mex U\}\\
&\eqlab{D}= \left\{\mex \displaystyle\bigcup_{\substack{G'^{g'} \in G^g,\\ \type(G'^{g'}) = 1_{\forall 1}}} \grundy^{\bm{0}}(G'^{g'}) \right\},
\end{align}
where
(A) follows from Theorem \ref{thm:grundy},
(B) follows from $g \lor 0 = 1 \lor 0 = 1 \neq 0$,
(C) follows from \eqref{eq:ijviog0x2jp2},
and (D) follows from \eqref{eq:59uybh2r8r63}.
\end{itemize}
\end{proof}

As shown in Theorem \ref{thm:grundy-01} (i), the set $\grundy^{\bm{1}}(G^g)$ has exactly one element for any $G^g \in \short$,
and the unique element of $\grundy^{\bm{1}}(G^g)$ is the Grundy number of $G^g \in \short$ calculated ignoring activeness,
given as the following definition.

\begin{definition}
We define a mapping $\grundy \colon \short \to \uint$ as $\grundy(G^g) \defeq= \mex\{\grundy(G'^{g'}) : G'^{g'} \in G\}$
for $G^g \in \short$.
\end{definition}

\begin{corollary}
\label{cor:grundy}
For any $G^g \in \short$, we have $\grundy^{\bm{1}}(G^g) = \{\grundy(G^g)\}.$
\end{corollary}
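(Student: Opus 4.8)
The plan is to deduce the corollary directly from Theorem \ref{thm:grundy-01} (i) by an easy induction on $\birth(G^g)$. Theorem \ref{thm:grundy-01} (i) already tells us that $\grundy^{\bm{1}}(G^g)$ is the singleton $\{\mex \bigcup_{G'^{g'} \in G^g} \grundy^{\bm{1}}(G'^{g'})\}$, so the only thing left to check is that the number inside this singleton coincides with $\grundy(G^g)$ as given by its defining recursion $\grundy(G^g) = \mex\{\grundy(G'^{g'}) : G'^{g'} \in G\}$.

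First I would set up the induction: for $G^g$ with $\birth(G^g) = 0$ we have $G = \es$, so the union over options is empty, $\mex \es = 0$, and Theorem \ref{thm:grundy-01} (i) gives $\grundy^{\bm{1}}(G^g) = \{0\}$, while the definition of $\grundy$ also yields $\grundy(G^g) = \mex \es = 0$; hence the base case holds. For the inductive step, assume the claim for every game of smaller birthday; in particular, for each option $G'^{g'} \in G^g$ we have $\birth(G'^{g'}) < \birth(G^g)$, so $\grundy^{\bm{1}}(G'^{g'}) = \{\grundy(G'^{g'})\}$ by the induction hypothesis.

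Then I would compute
\begin{align}
\bigcup_{G'^{g'} \in G^g} \grundy^{\bm{1}}(G'^{g'})
= \bigcup_{G'^{g'} \in G^g} \{\grundy(G'^{g'})\}
= \{\grundy(G'^{g'}) : G'^{g'} \in G\},
\end{align}
and substitute this into Theorem \ref{thm:grundy-01} (i) to obtain
\begin{align}
\grundy^{\bm{1}}(G^g)
= \Bigl\{ \mex \bigcup_{G'^{g'} \in G^g} \grundy^{\bm{1}}(G'^{g'}) \Bigr\}
= \bigl\{ \mex \{\grundy(G'^{g'}) : G'^{g'} \in G\} \bigr\}
= \{\grundy(G^g)\},
\end{align}
where the last equality is exactly the definition of $\grundy(G^g)$. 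This closes the induction.

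I do not expect any genuine obstacle here: the statement is essentially a restatement of Theorem \ref{thm:grundy-01} (i) together with the definition of $\grundy$, and the only minor point to be careful about is handling the empty union in the base case (equivalently, noting that when $G$ has no options both sides reduce to $\{\mex \es\} = \{0\}$). One could alternatively phrase the whole argument as a single induction showing simultaneously that $|\grundy^{\bm{1}}(G^g)| = 1$ and that its element equals $\grundy(G^g)$, but since finiteness of $\grundy^{\bm{1}}(G^g)$ is already part of Theorem \ref{thm:grundy-01} (i), invoking that theorem and inducting only on the value is the cleanest route.
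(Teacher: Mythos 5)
Your proof is correct and follows the same route the paper intends: the paper states this corollary without proof as an immediate consequence of Theorem \ref{thm:grundy-01} (i) and the recursive definition of $\grundy$, and your induction on $\birth(G^g)$ (with the empty-union base case) is exactly the argument needed to make that step explicit.
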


Also, Theorem \ref{thm:grundy-01} (ii) shows that
for any $G^g \in \short$, if $\type(G^g) = 1_{\forall 1}$, then the set $\grundy^{\bm{0}}(G^g)$ has exactly one element, which is interpreted as the Grundy number of $G^g$ calculated ignoring activeness and allowing only moves to the games $H^h$
with $\type(H^h) = 1_{\forall 1}$.

\subsubsection{Equivalence among Games}

We introduce an equivalence relation $=$ of games with activeness in the same way as games without activeness (Definition \ref{def:imp-equiv}).

\begin{definition}
We define a binary relation $=$ of $\short$ as
\begin{align}
\{(G^g, H^h) \in \short \times \short : \forall X^x \in \short, o(G^g+X^x) = o(H^h+X^x)\}.
\end{align}
Namely, 
\begin{align}
G^g = H^h \defeq{\iff} \forall X^x \in \short, o(G^g+X^x) = o(H^h+X^x).
\end{align}
\end{definition}
It is easily seen that the binary relation $=$ is an equivalence relation of $\short$.

By the definition, $G^g \neq H^h$ if and only if
there exists $X^x \in \short$ such that $o(G^g + X^x) \neq o(H^h + X^x)$.

\begin{definition}
For any $G^g, H^h, X^x \in \short$, if $o(G^g + X^x) \neq o(H^h + X^x)$, then
$X^x$ is said to \emph{distinguish} $G^g$ from $H^h$.
\end{definition}
Namely, $G^g \neq H^h$ if and only if there exists $X^x \in \short$ that distinguishes $G^g$ from $H^h$.

We confirm basic properties of the equivalence relation $=$ as Theorems \ref{thm:equiv}--\ref{thm:option-neq}.

\begin{theorem}
\label{thm:equiv}
Let $G^g, H^h \in \short$ be arbitrary.
If $G^g = H^h$, then for any $\bm{\gamma} \in \bin^{\infty}$, we have $\grundy^{\bm{\gamma}}(G^g) = \grundy^{\bm{\gamma}}(H^h)$.
\end{theorem}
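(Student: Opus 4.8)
The plan is to unfold the definition of $\grundy^{\bm{\gamma}}$ and reduce the claim to a statement purely about outcomes of sums, where the hypothesis $G^g = H^h$ applies directly. Recall that by Definition \ref{def:grundy-g}, $n \in \grundy^{\bm{\gamma}}(G^g)$ if and only if $o(G^g + \st^{\bm{\gamma}} n) = \PP$. So fixing any $\bm{\gamma} \in \bin^{\infty}$ and any $n \in \uint$, I want to show $n \in \grundy^{\bm{\gamma}}(G^g) \iff n \in \grundy^{\bm{\gamma}}(H^h)$, i.e. $o(G^g + \st^{\bm{\gamma}} n) = \PP \iff o(H^h + \st^{\bm{\gamma}} n) = \PP$.

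The key step is simply to instantiate the hypothesis $G^g = H^h$ at the particular test game $X^x \defeq\cong \st^{\bm{\gamma}} n$. Since $\st^{\bm{\gamma}} n \in \short$ (it is a well-defined game with activeness by Definition \ref{def:nimber}), the definition of $=$ gives $o(G^g + \st^{\bm{\gamma}} n) = o(H^h + \st^{\bm{\gamma}} n)$. In particular the two outcomes are both $\PP$ or both $\NN$, so $n$ lies in $\grundy^{\bm{\gamma}}(G^g)$ exactly when it lies in $\grundy^{\bm{\gamma}}(H^h)$. As $n$ was arbitrary, the two subsets of $\uint$ coincide, which is the claim. Writing this out as a short chain of equivalences:
\begin{align}
n \in \grundy^{\bm{\gamma}}(G^g)
\iff o(G^g + \st^{\bm{\gamma}} n) = \PP
\iff o(H^h + \st^{\bm{\gamma}} n) = \PP
\iff n \in \grundy^{\bm{\gamma}}(H^h),
\end{align}
where the middle equivalence uses $G^g = H^h$ with $X^x \cong \st^{\bm{\gamma}} n$, and the outer two use Definition \ref{def:grundy-g}.

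There is no real obstacle here — this theorem is essentially an immediate corollary of the definitions, analogous to Proposition \ref{prop:order}(iii) for games without activeness (which says $G = H$ implies $o(G) = o(H)$, the special case $n = 0$ being close to that). The only thing to be slightly careful about is that $\st^{\bm{\gamma}} n$ genuinely belongs to $\short$ for every $n$, which follows by an easy induction already implicit in Definition \ref{def:nimber}; I would just remark on it in passing rather than prove it separately. So the proof will be two or three lines.
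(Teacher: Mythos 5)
Your proof is correct and takes essentially the same approach as the paper: the paper argues by contradiction, picking $n$ in the symmetric difference of the two sets and noting that $\st^{\bm{\gamma}} n$ would then distinguish $G^g$ from $H^h$, which is just the contrapositive of your direct chain of equivalences with the test game $X^x \cong \st^{\bm{\gamma}} n$. No gaps.
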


\begin{proof}[Proof of Theorem \ref{thm:equiv}]
Assume $G^g = H^h$ and $\grundy^{\bm{\gamma}}(G^g) \neq \grundy^{\bm{\gamma}}(H^h)$ for some $\bm{\gamma} \in \bin^{\infty}$.
Without loss of generality, we may assume that there exists $n \in \grundy^{\bm{\gamma}}(G^g) \setminus \grundy^{\bm{\gamma}}(H^h)$.
Then $o(G^g + \st^{\bm{\gamma}}n) = \PP$ and $o(H^h + \st^{\bm{\gamma}}n) = \NN$;
that is, $\st^{\bm{\gamma}}n$ distinguishes $G^g$ from $H^h$.
This contradicts $G^g = H^h$.
\end{proof}

By Theorem \ref{thm:equiv}, if $\grundy^{\bm{\gamma}}(G^g) \neq \grundy^{\bm{\gamma}}(H^h)$ for some $\bm{\gamma} \in \bin^{\infty}$, then it can be determined that $G^g \neq H^h$.
On the other hand, the converse implication does not hold.
Namely, there exist $G^g, H^h \in \short$ with $G^g \neq H^h$ such that any nimber $\st^{\bm{\gamma}} n$ does not distinguish them.
That is addressed later in Remark \ref{rem:transitive} as a stronger argument.

Also, Theorem \ref{thm:equiv} yields the following corollary.

\begin{corollary}
\label{cor:equiv}
For any $G^g, H^h \in \short$, if $G^g = H^h$, then the following statements (i)--(iii) hold.
\begin{enumerate}[(i)]
\item $o(G^g) = o(H^h)$.
\item $\type(G^g) = \type(H^h)$, in particular, $g = h$.
\item $\grundy(G^g) = \grundy(H^h)$.
\end{enumerate}
\end{corollary}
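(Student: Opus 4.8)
The plan is to obtain all three parts as consequences of Theorem~\ref{thm:equiv}, which already transports the invariants $\grundy^{\bm{\gamma}}$ across an equivalence, combined with the explicit descriptions of these invariants for the special sequences $\bm{0}$ and $\bm{1}$ from Theorem~\ref{thm:grundy-01} and Corollary~\ref{cor:grundy}.

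First I would prove (i) directly from the definition of $=$: instantiating $X^x$ with $\es^0$ gives $o(G^g + \es^0) = o(H^h + \es^0)$, and since $G^g + \es^0 \cong G^g$ and $H^h + \es^0 \cong H^h$ by Theorem~\ref{thm:monoid}~(ii), this is exactly $o(G^g) = o(H^h)$.

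Next, for (iii) I would apply Theorem~\ref{thm:equiv} with $\bm{\gamma} = \bm{1}$ to obtain $\grundy^{\bm{1}}(G^g) = \grundy^{\bm{1}}(H^h)$, and then use Corollary~\ref{cor:grundy} to rewrite both sides as the singletons $\{\grundy(G^g)\}$ and $\{\grundy(H^h)\}$; comparing the unique elements yields $\grundy(G^g) = \grundy(H^h)$.

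For (ii), the key point is that $\type(G^g)$ is encoded in the cardinality of the single set $\grundy^{\bm{0}}(G^g)$: by Theorem~\ref{thm:grundy-01}~(ii), this set equals $\uint$ when $\type(G^g) = 0$, equals $\es$ when $\type(G^g) = 1_{\exists 0}$, and is a one-element set when $\type(G^g) = 1_{\forall 1}$, and these three possibilities are mutually exclusive. Applying Theorem~\ref{thm:equiv} with $\bm{\gamma} = \bm{0}$ gives $\grundy^{\bm{0}}(G^g) = \grundy^{\bm{0}}(H^h)$, so $G^g$ and $H^h$ fall into the same one of these three cases, and hence $\type(G^g) = \type(H^h)$; the clause $g = h$ then follows because $\type(\cdot) = 0$ holds exactly when the activeness bit is $0$, so $g$ and $h$ are both $0$ or both $1$. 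I do not expect a genuine obstacle, since the substantive work is already carried by Theorems~\ref{thm:equiv} and~\ref{thm:grundy-01}; the only step needing an explicit line is the verification that the three outputs $\uint$, $\es$, and ``singleton'' of $\grundy^{\bm{0}}$ are pairwise distinct --- which is clear because $\uint$ is infinite while $\es$ has no elements --- as this is precisely what makes $\type$ recoverable from $\grundy^{\bm{0}}$.
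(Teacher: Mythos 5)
Your proposal is correct and follows essentially the same route as the paper: (ii) and (iii) are obtained exactly as in the paper's proof, via Theorem~\ref{thm:equiv} with $\bm{\gamma}=\bm{0}$ and $\bm{\gamma}=\bm{1}$ together with Theorem~\ref{thm:grundy-01}~(ii) and Corollary~\ref{cor:grundy}, and your explicit check that $\uint$, $\es$, and a singleton are pairwise distinct is a worthwhile detail the paper leaves implicit. For (i) you instantiate the definition of $=$ with $X^x \cong \es^0$ directly instead of routing through $\grundy^{\bm{0}}$ as the paper does, but both arguments reduce to $o(G^g)=o(G^g+\es^0)$, so the difference is cosmetic.
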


\begin{proof}[Proof of Corollary \ref{cor:equiv}]
(Proof of (i))
Applying Theorem \ref{thm:equiv} with $\bm{\gamma} = \bm{0}$, we have $\grundy^{\bm{0}}(G^g) = \grundy^{\bm{0}}(H^h)$;
in particular, $0 \in \grundy^{\bm{0}}(G^g)$ if and only if $0 \in \grundy^{\bm{0}}(H^h)$.
This yields $o(G^g) = o(G^g + \es^0) = o(G^g + \st^{\bm{0}} 0) = o(H^h + \st^{\bm{0}} 0) = o(H^h + \es^0) = o(H^h)$.

(Proof of (ii)) 
Applying Theorem \ref{thm:equiv} with $\bm{\gamma} = \bm{0}$, we have $\grundy^{\bm{0}}(G^g) = \grundy^{\bm{0}}(H^h)$.
This implies $\type(G^g) = \type(H^h)$ by Theorem \ref{thm:grundy-01} (ii).

(Proof of (iii)) Directly from Theorem \ref{thm:equiv} with $\bm{\gamma} = \bm{1}$ and Corollary \ref{cor:grundy}.
\end{proof}

\begin{theorem}
\label{thm:add}
For any $G^g, H^h, J^j, K^k\in \short$, if $G^g = H^h$ and $J^j = K^k$, then $G^g+J^j = H^h+K^k$.
\end{theorem}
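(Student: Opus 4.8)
The plan is to reduce the two-variable congruence to the one-variable statement ``$G^g = H^h$ implies $G^g + J^j = H^h + J^j$ for every $J^j \in \short$'', and then obtain the full claim by applying this twice and invoking that $=$ is an equivalence relation.

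First I would establish the one-variable claim. Fix $J^j \in \short$ and let $X^x \in \short$ be arbitrary. By the associativity of $+$ (Theorem \ref{thm:monoid}(iii)) we have $(G^g + J^j) + X^x \cong G^g + (J^j + X^x)$ and likewise $(H^h + J^j) + X^x \cong H^h + (J^j + X^x)$. Since $\short$ is closed under $+$, the game $J^j + X^x$ is itself an element of $\short$, so the hypothesis $G^g = H^h$ may be applied with $J^j + X^x$ as the test game, giving $o(G^g + (J^j + X^x)) = o(H^h + (J^j + X^x))$. Chaining the three equalities yields $o((G^g + J^j) + X^x) = o((H^h + J^j) + X^x)$; as $X^x$ was arbitrary, this is exactly $G^g + J^j = H^h + J^j$.

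Next I would apply the one-variable claim to the hypothesis $J^j = K^k$, obtaining $J^j + H^h = K^k + H^h$, and then rewrite both sides via the commutativity of $+$ (Theorem \ref{thm:monoid}(i)) to get $H^h + J^j = H^h + K^k$. Combining $G^g + J^j = H^h + J^j$ with $H^h + J^j = H^h + K^k$ and using transitivity of $=$ gives $G^g + J^j = H^h + K^k$, as required.

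I do not expect a genuine obstacle here: the whole argument is a formal manipulation resting on the commutative-monoid structure of $(\short, +)$ established in Theorem \ref{thm:monoid}, together with the definition of $=$ and the fact that it is an equivalence relation. The only step needing a moment's attention is noticing that $J^j + X^x$ (equivalently $H^h + X^x$) is a legitimate candidate to feed into the definition of $=$, i.e.\ that one is allowed to test with a \emph{sum}; the rest is bookkeeping with associativity, commutativity, and transitivity.
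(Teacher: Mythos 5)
Your proposal is correct and follows essentially the same route as the paper: both first prove the one-variable congruence $A^a = B^b \implies A^a + C^c = B^b + C^c$ by testing with $C^c + X^x$ via associativity, then apply it twice and chain with transitivity (the paper leaves the commutativity step implicit where you make it explicit). No gaps.
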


\begin{proof}[Proof of Theorem \ref{thm:add}]
For any $A^a, B^b, C^c \in \short$, if $A^a = B^b$, then $A^a + C^c = B^b + C^c$ because
for any $X^x \in \short$, we have
\begin{align}
o((A^a+C^c)+X^x) = o(A^a+(C^c+X^x)) \eqlab{A}= o(B^b+(C^c+X^x)) = o((B^b+C^c)+X^x), \label{eq:pq4lkbv30lhp}
\end{align}
where (A) follows from $A^a = B^b$.
Applying this twice, we obtain
\begin{align}
G^g+J^j \eqlab{A}= H^h+J^j \eqlab{B}= H^h+K^k,
\end{align}
where
(A) follows from $G^g = H^h$ and \eqref{eq:pq4lkbv30lhp},
and (B) follows from $J^j = K^k$ and \eqref{eq:pq4lkbv30lhp}.
\end{proof}

\begin{theorem}
\label{thm:option-neq}
For any $G^g \in \short$ and $G'^{g'} \in G^g$, we have $G^g \neq G'^{g'}$.
\end{theorem}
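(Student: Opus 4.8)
The plan is to produce a single game $X^x \in \short$ with $o(G^g + X^x) \neq o(G'^{g'} + X^x)$; by definition this means precisely that $X^x$ distinguishes $G^g$ from $G'^{g'}$, hence $G^g \neq G'^{g'}$. Mimicking the normal-play fact that a game is never equal to one of its options, the natural choice is to add a copy of the option $G'^{g'}$ together with one extra active summand to keep the whole sum active: I would take $X^x \defeq\cong G'^{g'} + \es^1$, which lies in $\short$ since $\short$ is closed under $+$.

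First I would evaluate $o(G'^{g'} + X^x)$. By Theorem \ref{thm:monoid}, $G'^{g'} + X^x \cong G'^{g'} + G'^{g'} + \es^1$, and Theorem \ref{prop:twice} gives $o(G'^{g'} + G'^{g'} + \es^1) = \PP$.

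Next I would evaluate $o(G^g + X^x)$. Again by Theorem \ref{thm:monoid}, $G^g + X^x \cong G^g + G'^{g'} + \es^1$, and this game is active since its activeness equals $g \lor g' \lor 1 = 1$. Because $G'^{g'} \in G^g$, moving in the first component from $G^g$ to $G'^{g'}$ yields the option $G'^{g'} + G'^{g'} + \es^1$, which is in $\PP$ by Theorem \ref{prop:twice}. An active game possessing a $\PP$-option is in $\NN$ by the definition of $o$, so $o(G^g + X^x) = \NN$. Combining the two computations, $o(G^g + X^x) = \NN \neq \PP = o(G'^{g'} + X^x)$, so $X^x$ distinguishes $G^g$ from $G'^{g'}$ and the proof is complete.

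I do not expect a genuine obstacle here, since essentially all of the work is carried by Theorem \ref{prop:twice}. The one point worth flagging is the role of the $\es^1$ summand: without it, $G^g + G'^{g'}$ could be inactive (when $g = g' = 0$), which would force it into $\PP$ and break the argument; inserting $\es^1$ guarantees activeness, so that the ``copycat'' move to $G'^{g'} + G'^{g'} + \es^1$ is available and genuinely certifies that the outcome is $\NN$.
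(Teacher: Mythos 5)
Your proof is correct and takes essentially the same approach as the paper: the paper uses the distinguisher $X^x \cong G^g + \es^1$ (so that $o(G^g+X^x)=\PP$ by Theorem \ref{prop:twice} and $o(G'^{g'}+X^x)=\NN$ via the option $G'^{g'}+G'^{g'}+\es^1$), while you use the mirror-image choice $X^x \cong G'^{g'}+\es^1$, with the same copycat idea and the same reliance on Theorem \ref{prop:twice} and the $\es^1$ summand to guarantee activeness.
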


\begin{proof}[Proof of Theorem \ref{thm:option-neq}]
A game $X^x \defeq\cong G^g + \es^1$ distinguishes $G^g$ from $G'^{g'}$:
\begin{align}
o(G^g + X^x) = o(G^g + G^g + \es^1) &\eqlab{A}= \PP,\\
o(G'^{g'} + X^x)= o(G'^{g'} + G^g + \es^1) &\eqlab{B}= \NN,
\end{align}
where
(A) follows from Theorem \ref{prop:twice},
and (B) follows from $G'^{g'} + G^g + \es^1 \owns G'^{g'} + G'^{g'} + \es^1 \in \PP$ by Theorem \ref{prop:twice}.
\end{proof}

Given two games $G^g$ and $H^h$, the following lemma gives a sufficient condition for the equivalence $G^g = H^h$.

\begin{lemma}
\label{lem:equal}
Let $G^g, H^g \in \short$ be arbitrary.
If for any $X^x \in \short$ with $g \lor x = 1$, the following conditions (a) and (b) hold, then $G^g = H^g$.
\begin{enumerate}[(a)]
\item If $o(G^g + X^x) = \PP$, then for any $H'^{h'} \in H^g$, it holds that $o(H'^{h'} + X^x) = \NN$.
\item If there exists $G'^{g'} \in G^g$ such that $o(G'^{g'} + X^x) = \PP$,
then there exists $H'^{h'} \in H^h$ such that $o(H'^{h'} + X^x) = \PP$.
\end{enumerate}
\end{lemma}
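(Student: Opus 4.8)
The plan is to prove the (equivalent) statement that $o(G^g + X^x) = o(H^g + X^x)$ for \emph{every} $X^x \in \short$, by strong induction on $\birth(X^x)$. Concretely I would run a minimal-counterexample argument: suppose some $X^x$ with $o(G^g + X^x) \neq o(H^g + X^x)$ exists and pick one with $\birth(X^x)$ minimal, so that $o(G^g + Y^y) = o(H^g + Y^y)$ holds for all $Y^y$ with $\birth(Y^y) < \birth(X^x)$. The first (bookkeeping) step is to note that if $g \lor x = 0$ then both sums are inactive, hence both lie in $\PP$ by \eqref{eq:outcome2}; so the counterexample must satisfy $g \lor x = 1$, and in particular hypotheses (a) and (b) are applicable to this $X^x$. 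It then remains to rule out both orderings of the outcomes.

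Case A: $o(G^g + X^x) = \PP$ and $o(H^g + X^x) = \NN$. Since $g \lor x = 1$, the assumption $o(H^g + X^x) = \NN$ produces, via \eqref{eq:sum}, an option of $H^g + X^x$ in $\PP$, which is either $H'^{h'} + X^x$ with $H'^{h'} \in H^g$ or $H^g + X'^{x'}$ with $X'^{x'} \in X^x$. In the first sub-case, hypothesis (a) (applied using $o(G^g + X^x) = \PP$) forces $o(H'^{h'} + X^x) = \NN$, a contradiction. In the second sub-case $o(H^g + X'^{x'}) = \PP$, and since $\birth(X'^{x'}) < \birth(X^x)$ the induction hypothesis gives $o(G^g + X'^{x'}) = \PP$; but $G^g + X'^{x'}$ is an option of $G^g + X^x$ by \eqref{eq:sum}, so $o(G^g + X^x) = \NN$, contradicting the case assumption.

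Case B: $o(G^g + X^x) = \NN$ and $o(H^g + X^x) = \PP$. Here $o(G^g + X^x) = \NN$ yields an option of $G^g + X^x$ in $\PP$, either $G'^{g'} + X^x$ with $G'^{g'} \in G^g$ or $G^g + X'^{x'}$ with $X'^{x'} \in X^x$. In the first sub-case the antecedent of (b) is met, so (b) provides some $H'^{h'} \in H^g$ with $o(H'^{h'} + X^x) = \PP$; this is a $\PP$-option of $H^g + X^x$, whence $o(H^g + X^x) = \NN$, a contradiction. In the second sub-case $o(G^g + X'^{x'}) = \PP$, so by the induction hypothesis $o(H^g + X'^{x'}) = \PP$; since $H^g + X'^{x'}$ is an option of $H^g + X^x$, this gives $o(H^g + X^x) = \NN$, again a contradiction. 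Both cases being impossible, no counterexample exists, so $G^g = H^g$.

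The hard part is not any single computation but getting the induction set-up right: it must be induction on $\birth(X^x)$ alone (not on the births of $G^g$ or $H^g$), so that in both sub-cases involving an option $X'^{x'}$ of $X^x$ the equality $o(G^g + X'^{x'}) = o(H^g + X'^{x'})$ is already available regardless of which outcome ordering is being analysed, while the asymmetric-looking hypotheses (a) and (b) are precisely what kills the two remaining ``$H$-move'' and ``$G$-move'' sub-cases. The other point needing care is the activeness bookkeeping — invoking (a) or (b) only after $g \lor x = 1$ has been secured, and checking against \eqref{eq:sum} that the games produced in each sub-case really are options of the intended sum.
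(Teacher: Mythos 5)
Your proof is correct and is essentially the paper's own argument: induction on $\birth(X^x)$ alone, the reduction to the case $g \lor x = 1$, and the same case analysis in which hypothesis (a) handles the $H$-move sub-case, hypothesis (b) handles the $G$-move sub-case, and the induction hypothesis handles moves on $X^x$. Recasting it as a minimal-counterexample argument rather than a direct verification of both outcome cases is only a cosmetic difference.
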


\begin{proof}[Proof of Lemma \ref{lem:equal}]
We show
\begin{align}
\forall X^x \in \short, o(G^g+X^x) = o(H^g+X^x)
\end{align}
by induction on $\birth(X^x)$.
Choose $X^x \in \short$ arbitrarily.
We consider the following two cases separately: the case $o(G^g+X^x) = \PP$ and the case $o(G^g+X^x) = \NN$.
\begin{itemize}
\item The case $o(G^g+X^x) = \PP$:
If $g \lor x = 0$, then $o(H^g+X^x) = \PP$ holds directly. 
If $g \lor x = 1$, then we confirm $o(H^g+X^x) = \PP$ because every option of $H^g+X^x$ is in $\NN$ as follows.
\begin{itemize}
\item For any $H'^{h'} \in H^g$, we have $o(H'^{h'} + X^x) = \NN$ directly from the assumption (a).
\item For any $X'^{x'} \in X^x$, we have
\begin{align}
o(H^g+X'^{x'}) \eqlab{A}= o(G^g + X'^{x'}) \eqlab{B}= \NN,
\end{align}
where
(A) follows from the induction hypothesis,
and (B) follows from $o(G^g+X^x) = \PP$ and $g \lor x = 1$.
\end{itemize}

\item The case $o(G^g+X^x) = \NN$:
Then at least one of the following two cases applies:
the case where $o(G'^{g'}+X^x) = \PP$ for some $G'^{g'} \in G^g$;
the case where $o(G^g+X'^{x'}) = \PP$ for some $X'^{x'} \in X^x$.
We confirm $o(H^g+X^x) = \NN$ because $g \lor x = 1$ by $o(G^g+X^x) = \NN$,
and the game $H^g+X^x$ has an option in $\PP$ for the two cases as follows.
\begin{itemize}
\item The case where $o(G'^{g'}+X^x) = \PP$ for some $G'^{g'} \in G^g$:
Then $H^g + X^x \owns H'^{h'}+X^{x} \in \PP$ for some $H'^{h'} \in \short$ by the assumption (b).
\item The case where $o(G^g+X'^{x'}) = \PP$ for some $X'^{x'} \in X^x$:
Then the option $H^g+X'^{x'}$ of $H^g + X^x$ satisfies
\begin{align}
o(H^g+X'^{x'}) \eqlab{A}= o(G^g + X'^{x'}) = \PP,
\end{align}
where (A) follows from the induction hypothesis.
\end{itemize}
\end{itemize}
\end{proof}

Using Lemma \ref{lem:equal}, we show the following theorem corresponding to Theorem \ref{cor:replace}.

\begin{theorem}[Replacement Lemma]
\label{prop:replace}
For any $\{G'^{g'_1}_1, G'^{g'_2}_2, \ldots, G'^{g'_n}_n\}^g \in \short$ and $H'^{h'_1}_1 \in \short$,
if $G'^{g'_1}_1 = H'^{h'_1}_1$, then
$\{G'^{g'_1}_1, G'^{g'_2}_2, \ldots, G'^{g'_n}_n\}^g =  \{H'^{h'_1}_1, G'^{g'_2}_2, \ldots, G'^{g'_n}_n \}^g$.
\end{theorem}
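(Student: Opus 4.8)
The plan is to deduce Theorem~\ref{prop:replace} from Lemma~\ref{lem:equal} by verifying the two hypotheses (a) and (b) of that lemma for the pair $G^g \defeq\cong \{G'^{g'_1}_1, G'^{g'_2}_2, \ldots, G'^{g'_n}_n\}^g$ and $H^g \defeq\cong \{H'^{h'_1}_1, G'^{g'_2}_2, \ldots, G'^{g'_n}_n\}^g$. Note first that the two games legitimately share the same activeness bit $g$: by Corollary~\ref{cor:equiv}(ii), the hypothesis $G'^{g'_1}_1 = H'^{h'_1}_1$ forces $g'_1 = h'_1$, so $H^g$ is well-formed with the same superscript. The key observation driving everything is that the two option sets differ only in one member, and that member has been replaced by an equivalent game; so by Theorem~\ref{thm:add} (with the common summand $X^x$), for every $X^x$ we have $o(G'^{g'_1}_1 + X^x) = o(H'^{h'_1}_1 + X^x)$, while the options $G'^{g'_i}_i + X^x$ for $i \geq 2$ and $X^g + X'^{x'}$ are literally identical on both sides.

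First I would fix an arbitrary $X^x \in \short$ with $g \lor x = 1$ and check (a): suppose $o(G^g + X^x) = \PP$. Since $g \lor x = 1$, \eqref{eq:outcome2} tells us every option of $G^g + X^x$ is in $\NN$; in particular $o(G'^{g'_i}_i + X^x) = \NN$ for all $i \in [n]$ and $o(G^g + X'^{x'}) = \NN$ for all $X'^{x'} \in X^x$. I must show every option of $H^g + X^x$ is in $\NN$, i.e. that $o(H'^{h'_1}_1 + X^x) = \NN$, that $o(G'^{g'_i}_i + X^x) = \NN$ for $i \geq 2$, and that $o(H^g + X'^{x'}) = \NN$ for $X'^{x'} \in X^x$. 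The middle family is immediate (same games). For the first, $o(H'^{h'_1}_1 + X^x) = o(G'^{g'_1}_1 + X^x) = \NN$ by $G'^{g'_1}_1 = H'^{h'_1}_1$. Wait --- the third family, $o(H^g + X'^{x'})$, is not obviously handled by hypothesis (a) alone; but this is exactly why Lemma~\ref{lem:equal} runs its own induction on $\birth(X^x)$ and only asks us to control options coming from the $H$-side option set, not from $X$. So condition (a) reduces precisely to: $o(H'^{h'_1}_1 + X^x) = \NN$ for all $H'^{h'_1}_1$ in the option set of $H^g$, which is a finite check over the list $H'^{h'_1}_1, G'^{g'_2}_2, \ldots, G'^{g'_n}_n$, each handled either by the equivalence $G'^{g'_1}_1 = H'^{h'_1}_1$ or by the fact that $o(G^g + X^x) = \PP$ already gave us $o(G'^{g'_i}_i + X^x) = \NN$.

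Next I would check (b): suppose there is $G''^{g''} \in G^g$ with $o(G''^{g''} + X^x) = \PP$. Either $G''^{g''} \cong G'^{g'_1}_1$, in which case $o(H'^{h'_1}_1 + X^x) = o(G'^{g'_1}_1 + X^x) = \PP$ and $H'^{h'_1}_1 \in H^g$ witnesses (b); or $G''^{g''} \cong G'^{g'_i}_i$ for some $i \geq 2$, in which case the very same $G'^{g'_i}_i$ lies in $H^g$ and witnesses (b). Thus both (a) and (b) hold for every admissible $X^x$, and Lemma~\ref{lem:equal} yields $G^g = H^g$, which is the claim. I expect the only subtle point --- the part worth stating carefully rather than the part that is genuinely hard --- is making sure $g = h$ so that Lemma~\ref{lem:equal} (which assumes both games carry the bit $g$) actually applies; this is supplied by Corollary~\ref{cor:equiv}(ii) applied to $G'^{g'_1}_1 = H'^{h'_1}_1$. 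Everything else is a bookkeeping translation of ``the option multisets differ in one equivalent swap'' into the hypotheses of the lemma, so there is no real obstacle beyond that.
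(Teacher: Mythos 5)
Your proposal is correct and follows essentially the same route as the paper: both verify conditions (a) and (b) of Lemma~\ref{lem:equal} for the pair $G^g$ and $H^g$, handling the replaced option via the equivalence $G'^{g'_1}_1 = H'^{h'_1}_1$ and the remaining options verbatim. The only superfluous step is your worry about the activeness bits: both games carry the superscript $g$ by construction in the statement, so no appeal to Corollary~\ref{cor:equiv}(ii) is needed there.
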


\begin{proof}[Proof of Theorem \ref{prop:replace}]
Define $G^g \defeq\cong \{G'^{g'_1}_1, G'^{g'_2}_2, \ldots, G'^{g'_n}_n\}^g$ and $H^g \defeq\cong \{H'^{h'_1}_1, G'^{g'_2}_2, \ldots, G'^{g'_n}_n\}^g$. 
It suffices to show that $G^g$ and $H^h$ satisfy the conditions (a) and (b) of Lemma \ref{lem:equal}.
To prove that, choose $X^x \in \short$ with
\begin{align}
g \lor x = 1 \label{eq:qm6qtioj81gj}
\end{align}
arbitrarily.

(Proof of Lemma \ref{lem:equal} (a))
Assume $o(G^g + X^x) = \PP$.
Then we confirm that every option $H'^{h'}$ of $H^g$ satisfies $o(H'^{h'} + X^x) = \NN$ as follows.
\begin{itemize}
\item For any $H'^{h'_1}_1\in H^g$, we have
\begin{align}
H'^{h'_1}_1+X^{x} \eqlab{A}= G'^{g'_1}_1 + X^{x} \in G^g + X^x \eqlab{B}\subseteq \NN,
\end{align}
where
(A) follows from $G'^{g'_1}_1 = H'^{h'_1}_1$,
and (B) follows from $o(G^g+X^x) = \PP$ and \eqref{eq:qm6qtioj81gj}.

\item For any $G'^{g'_i}_i \in H^g$ with $i \geq 2$, we have
$G'^{g'_i}_i + X^x \in G^g + X^x \subseteq \NN$ by $o(G^g+X^x) = \PP$ and \eqref{eq:qm6qtioj81gj}.
\end{itemize}

(Proof of Lemma \ref{lem:equal} (b))
Assume that $o(G'^{g'_i}_i + X^x) = \PP$ for some $G'^{g'_i}_i \in G^g$.
We consider the following two cases separately: the case $o(G'^{g'_1}_1+X^x) = \PP$ and the case where $o(G'^{g'_i}_i+X^x) = \PP$ for some $i \geq 2$.
\begin{itemize}
\item The case $o(G'^{g'_1}_1+X^x) = \PP$: Then $H^g + X^x \owns H'^{h'_1}_1+X^{x} = G'^{g'_1}_1 + X^{x} \in \PP$ by $G'^{g'_1}_1 = H'^{h'_1}_1$.
\item The case where $o(G'^{g'_i}_i+X^x) = \PP$ for some $i \geq 2$: Then $H^g + X^x \owns G'^{g'_i}_i + X^x \in \PP$.
\end{itemize}
\end{proof}

Suppose that a game $G^g$ has two distinct options $G'^{g'_1}_1$ and $G'^{g'_2}_2$ with $G'^{g'_1}_1 = G'^{g'_2}_2$.
The game $H^h$ obtained by replacing $G'^{g'_2}_2$ with $G'^{g'_1}_1$ (and removing the duplicate) is equivalent to $G^g$ by Theorem \ref{prop:replace}.
Therefore, we obtain the following corollary.

\begin{corollary}
For any $G^g \in \short$ and $G'^{g'_1}_1, G'^{g'_2}_2 \in G^g$, if $G'^{g'_1}_1 \not\cong G'^{g'_2}_2$ and $G'^{g'_1}_1 = G'^{g'_2}_2$ then $G^g = (G\setminus\{G'^{g'_1}_1\})^g$.
\end{corollary}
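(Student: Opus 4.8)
The plan is to obtain this corollary as an immediate consequence of the Replacement Lemma (Theorem~\ref{prop:replace}). The underlying idea is that replacing the option $G'^{g'_1}_1$ by the equivalent game $G'^{g'_2}_2$ does not change the equivalence class, and since $G'^{g'_2}_2$ is \emph{already} an option of $G^g$, this replacement amounts precisely to deleting $G'^{g'_1}_1$ from the option set.

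Concretely, I would first record the setup. Since $G'^{g'_1}_1 \in G$, we may enumerate $G \setminus \{G'^{g'_1}_1\} = \{G'^{g'_2}_2, G'^{g'_3}_3, \ldots, G'^{g'_n}_n\}$, so that $G^g \cong \{G'^{g'_1}_1, G'^{g'_2}_2, \ldots, G'^{g'_n}_n\}^g$; here $G'^{g'_2}_2$ does lie in $G \setminus \{G'^{g'_1}_1\}$ precisely because $G'^{g'_1}_1 \not\cong G'^{g'_2}_2$. Setting $H'^{h'_1}_1 \defeq\cong G'^{g'_2}_2$, the hypothesis $G'^{g'_1}_1 = G'^{g'_2}_2$ becomes $G'^{g'_1}_1 = H'^{h'_1}_1$, so Theorem~\ref{prop:replace} applies and gives $G^g = \{H'^{h'_1}_1, G'^{g'_2}_2, \ldots, G'^{g'_n}_n\}^g = \{G'^{g'_2}_2, G'^{g'_2}_2, G'^{g'_3}_3, \ldots, G'^{g'_n}_n\}^g$. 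Finally I would note that the two occurrences of $G'^{g'_2}_2$ coincide as elements of the underlying set, so the last game equals $\{G'^{g'_2}_2, G'^{g'_3}_3, \ldots, G'^{g'_n}_n\}^g \cong (G \setminus \{G'^{g'_1}_1\})^g$, which is the desired conclusion.

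There is essentially no obstacle here; the only thing requiring any care is the bookkeeping between the informal list notation used in the statement of Theorem~\ref{prop:replace} and honest set operations on $G$. In particular, the hypothesis $G'^{g'_1}_1 \not\cong G'^{g'_2}_2$ is exactly what guarantees that adjoining $G'^{g'_2}_2$ after removing $G'^{g'_1}_1$ really reproduces $G \setminus \{G'^{g'_1}_1\}$ (were the two options equal as elements, the claimed identity would delete that element altogether and would generally fail). If one prefers, the argument can be phrased purely through the compatibility of $=$ with $\cong$ together with Theorem~\ref{prop:replace}, without ever re-enumerating $G$, but the explicit enumeration makes the match with Theorem~\ref{prop:replace} most transparent.
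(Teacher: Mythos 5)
Your proof is correct and follows exactly the route the paper takes: it derives the corollary from the Replacement Lemma (Theorem~\ref{prop:replace}) by substituting $G'^{g'_2}_2$ for $G'^{g'_1}_1$ and letting the resulting duplicate collapse in the underlying set. Your explicit remark that the hypothesis $G'^{g'_1}_1 \not\cong G'^{g'_2}_2$ is what makes the set-level bookkeeping work is a welcome clarification of a point the paper only states informally.
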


\subsection{Distinguishing Theorem}
\label{subsec:distinguish}

If $G^g \neq H^h$, then there exists a game $X^x$ that distinguishes $G^x$ from $H^x$.
Then it is not specified which of $o(G^g + X^x)$ and $o(H^h + X^x)$ is in $\PP$.
In this subsection, we show that if $G^g \neq H^h$, then there exists a game $X^1$ such that $o(G^g + X^1) = \PP$ and $o(H^h + X^1) = \NN$.
More generally, we consider a game $X^1$ such that
$o(G_1^{g_1} + X^1) =  o(G_2^{g_2} + X^1) = \cdots = o(G_m^{g_m} + X^1) =\PP$ and
$o(H_1^{h_1} + X^1) =  o(H_2^{h_2} + X^1) = \cdots = o(H_n^{h_n} + X^1) =\NN$,
and we give a necessary and sufficient condition for such a game $X^1$ exists as Theorem \ref{thm:distinguish}.
To state Theorem \ref{thm:distinguish}, we first introduce the following definition.

\begin{definition}
For $G^g, H^h \in \short$, if the following conditions (a) and (b) hold, then we write $G^g \bowtie H^h$
.
\begin{enumerate}[(a)]
\item For any $G'^{g'} \in G^g$, it holds that $G'^{g'} \neq H^h$.
\item For any $H'^{h'} \in H^h$, it holds that $G^{g} \neq H'^{h'}$.
\end{enumerate}
\end{definition}

Using the above definition, we state the desired theorem as follows.

\begin{theorem}[Distinguishing Theorem]
\label{thm:distinguish}
For any $m, n \in \uint$, and $G_1^{g_1}, G_2^{g_2}, \ldots, G_m^{g_m}, H_1^{h_1}, H_2^{h_2}, \ldots, H_n^{h_n}$,
the following conditions (a) and (b) are equivalent.
\begin{enumerate}[(a)]
\item There exists $X^1 \in \short$ such that
\begin{align}
o(G_i^{g_i} + X^1) &= \PP \,\,\text{for any}\,\,i \in [m],\label{eq:wtv7j5xqrzec}\\
o(H_j^{h_j} + X^1) &= \NN \,\,\text{for any}\,\,j \in [n]. \label{eq:zig7xesatpwr}
\end{align}
\item The following conditions (b1) and (b2) hold.
\begin{itemize}
\item[\rm{(b1)}] $G^{g_i}_i \bowtie G^{g_j}_j$ for any $i, j \in [m]$.
\item[\rm{(b2)}] $G^{g_i}_i \neq H^{h_j}_j$ for any $i \in [m], j \in [n]$.
\end{itemize}
\end{enumerate}
\end{theorem}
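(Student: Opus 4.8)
The plan is to prove the two directions separately, with the forward implication $(a)\Rightarrow(b)$ being routine and the reverse implication $(b)\Rightarrow(a)$ carrying essentially all of the content. For $(a)\Rightarrow(b)$: suppose such an $X^1$ exists. Condition (b2) is immediate, since $X^1$ distinguishes each $G_i^{g_i}$ from each $H_j^{h_j}$ (their outcomes in sum with $X^1$ differ). For (b1), fix $i,j\in[m]$ and consider any option $G'^{g'}\in G_i^{g_i}$; I want $G'^{g'}\neq G_j^{g_j}$. Since $o(G_i^{g_i}+X^1)=\PP$ and $X^1$ is active, every option of $G_i^{g_i}+X^1$ is in $\NN$, in particular $o(G'^{g'}+X^1)=\NN$, whereas $o(G_j^{g_j}+X^1)=\PP$; so $X^1$ distinguishes them. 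The symmetric argument handles options of $G_j^{g_j}$, giving $G_i^{g_i}\bowtie G_j^{g_j}$.

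The hard part is $(b)\Rightarrow(a)$, where I must construct the distinguishing game $X^1$. The natural approach is to build $X^1$ explicitly by recursion and prove its correctness by induction. I expect the construction to be something like
\begin{align*}
X^1 \defeq\cong \left(\{G_i^{g_i} + \es^1 : i \in [m]\} \cup \{\text{options handling the }H_j\text{'s and the }\bowtie\text{ witnesses}\}\right)^1.
\end{align*}
The intuition: to force $o(G_i^{g_i}+X^1)=\PP$ we need every option of the sum to be in $\NN$; the options coming from moves in $G_i^{g_i}$ are dangerous, and including a copy of $G_i^{g_i}$ among the options of $X^1$ lets us use a "play-the-mirror" strategy (as in Theorem~\ref{prop:twice}) — but because the $G_i$'s differ from each other we cannot use a single mirror, which is exactly why hypothesis (b1), i.e.\ $G_i^{g_i}\bowtie G_j^{g_j}$, is needed: it says the options of one $G_i$ are distinguishable from the other $G_j$'s, so we can recursively attach distinguishing gadgets. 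Meanwhile (b2) is what lets us arrange $o(H_j^{h_j}+X^1)=\NN$, namely by ensuring $X^1$ has an option $Y^y$ with $o(H_j^{h_j}+Y^y)=\PP$ for each $j$. The recursion will be on some well-founded measure — plausibly the total birthday $\sum_i \birth(G_i^{g_i}) + \sum_j \birth(H_j^{h_j})$, or a more delicate multiset ordering — and I anticipate the main obstacle is setting up this measure so that every recursive invocation (there will be one per option of each $G_i$, matched against the other $G_j$'s, and one per $H_j$, unpacking its witness $X_j^{x_j}$) strictly decreases it, while simultaneously verifying that conditions (b1)–(b2) are inherited by each sub-instance.

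Concretely, I would proceed as follows. First, reduce to the case $g_i = 1$ for all $i$ and handle the inactive cases and the $m=0$ or $n=0$ boundary cases separately (these should be easy: e.g.\ if some $G_i$ is inactive then $o(G_i^{g_i}+X^1)=\PP$ automatically for any $X^1$, etc.). Second, for each pair $i,j\in[m]$ and each option $G'^{g'}\in G_i^{g_i}$, hypothesis (b1)(a) gives a witness distinguishing $G'^{g'}$ from $G_j^{g_j}$; for each $j\in[n]$, hypothesis (b2) gives a witness $W_j$ distinguishing $G_i^{g_i}$ from $H_j^{h_j}$ (for each $i$), hence — after possibly replacing $W_j$ by the recursively-constructed game for the appropriate sub-instance — a game $Y_j^{y_j}$ with $o(G_i^{g_i}+Y_j^{y_j})=\NN$ for all $i$ and $o(H_j^{h_j}+Y_j^{y_j})=\PP$. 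Third, assemble $X^1$ with option set consisting of all the $Y_j^{y_j}$ (to kill the $H_j$'s) together with, for each $i$, a gadget built from $G_i^{g_i}$ and the distinguishing witnesses above (to absorb $G_i$-moves via a mirroring strategy). Fourth, verify $o(G_i^{g_i}+X^1)=\PP$ and $o(H_j^{h_j}+X^1)=\NN$ by induction, checking each option of each sum lands in the correct class, invoking Theorem~\ref{prop:twice} for the mirror moves and the induction hypothesis (on the decreased measure, with inherited (b1)–(b2)) for the cross moves. The delicate bookkeeping in steps two and three — ensuring the sub-instances genuinely satisfy (b1)–(b2) and genuinely decrease the measure — is where I expect to spend most of the effort.
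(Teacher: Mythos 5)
Your $(a)\Rightarrow(b)$ direction is correct and is exactly the paper's argument. The $(b)\Rightarrow(a)$ direction, however, is a plan rather than a proof, and two of its concrete commitments would fail. First, the claim that if some $G_i^{g_i}$ is inactive then $o(G_i^{g_i}+X^1)=\PP$ automatically is false: since $X^1$ is active, the sum $G_i^{g_i}+X^1$ is active, so its outcome is determined by its options (e.g.\ $o(\es^0+\es^{01})=\NN$); no such case split is needed anyway. Second, the sketched option set $\{G_i^{g_i}+\es^1 : i\in[m]\}$ for $X^1$ is self-defeating: it gives $G_i^{g_i}+X^1$ the option $G_i^{g_i}+G_i^{g_i}+\es^1$, which is in $\PP$ by Theorem \ref{prop:twice}, forcing $o(G_i^{g_i}+X^1)=\NN$ --- the opposite of \eqref{eq:wtv7j5xqrzec}. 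The mirror gadgets that actually work are built from the \emph{options} $G'^{g'}+\es^1$ with $G'^{g'}\in G_i^{g_i}$, and even these survive only in the single-$G$ lemmas; in the full theorem they must be abandoned, because with several games $G_j^{g_j}$ the option $G_j^{g_j}+(G'^{g'}+\es^1)$ with $G'^{g'}\in G_i^{g_i}$, $j\neq i$, cannot be answered by mirroring.

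The deeper structural issue is your reliance on a recursion over a birthday-type measure. The auxiliary games you need --- a $Y_j$ with $o(H_j^{h_j}+Y_j)=\PP$ and $o(G_i^{g_i}+Y_j)=\NN$ for all $i$, and analogously a $Y_{i,k}$ with $o(G'^{g'_{i,k}}_{i,k}+Y_{i,k})=\PP$ and $o(G_j^{g_j}+Y_{i,k})=\NN$ for all $j$ --- are instances of the same kind of statement on collections of games whose total birthday is not smaller than that of the original instance (for $n=1$ the first sub-instance involves exactly the same games), so the induction you propose does not obviously terminate. The paper avoids any such measure: it first proves directly, with no induction, that $G^g\neq H^h$ yields $X^1$ with $o(G^g+X^1)=\PP$ and $o(H^h+X^1)=\NN$ (Lemma \ref{lem:distinguish}, via a one-game mirror construction), then bootstraps this to one game versus many (Lemma \ref{lem:distinguish2}), and finally assembles $X^1$ for the theorem as the set of all the resulting $Y_{i,k}^1$ and $Z_j^1$ with no mirror gadgets at all: the winning response to a move $G_i^{g_i}+X^1\to G'^{g'_{i,k}}_{i,k}+X^1$ is the move to $G'^{g'_{i,k}}_{i,k}+Y_{i,k}^1\in\PP$. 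To repair your proof you would need to isolate and prove these two lemmas (or an equivalent) by direct construction rather than by recursion on the theorem itself.
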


To prove Theorem \ref{thm:distinguish}, we first show results for particular cases as Lemmas \ref{lem:distinguish} and \ref{lem:distinguish2}.

\begin{lemma}
\label{lem:distinguish}
For any $G^g, H^h \in \short$, if $G^g \neq H^h$, then there exists $X^1 \in \short$ such that $o(G^g + X^1) = \PP$ and $o(H^h + X^1) = \NN$.
\end{lemma}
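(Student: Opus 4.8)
The plan is to reduce the whole statement to a single explicit ``augmentation'' construction, which is then applied at most twice and needs no induction on birthday. The key auxiliary fact I would isolate first is: \emph{if $G^g, H^h, W^w \in \short$ satisfy $o(G^g + W^w) = \NN$ and $o(H^h + W^w) = \PP$, then the game $X^1 \defeq\cong ( \{ G'^{g'} + \es^1 : G'^{g'} \in G^g \} \cup \{ W^w \} )^1$ satisfies $o(G^g + X^1) = \PP$ and $o(H^h + X^1) = \NN$.} Granting this, the lemma follows at once. Since $G^g \neq H^h$, some $X^x$ distinguishes $G^g$ from $H^h$, so exactly one of two cases occurs. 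If $o(G^g + X^x) = \NN$ and $o(H^h + X^x) = \PP$, I apply the auxiliary fact to $(G^g, H^h, X^x)$ directly. Otherwise $o(G^g + X^x) = \PP$ and $o(H^h + X^x) = \NN$, so $X^x$ plays the role of $W$ for the pair $(H^h, G^g)$, and the auxiliary fact produces an active $Y^1$ with $o(H^h + Y^1) = \PP$ and $o(G^g + Y^1) = \NN$; now $Y^1$ plays the role of $W$ for $(G^g, H^h)$, and a second application of the auxiliary fact yields the desired $X^1$.

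Proving the auxiliary fact should be a direct option count together with Theorem~\ref{prop:twice}, with no induction. Both $G^g + X^1$ and $H^h + X^1$ are active. For the $H$-side, $H^h + X^1$ has the option $H^h + W^w \in \PP$, so $o(H^h + X^1) = \NN$. For the $G$-side I would check that every option of $G^g + X^1$ lies in $\NN$; these fall into three shapes. An option $\tilde G^{\tilde g} + X^1$ (a move inside $G^g$) is active and has the option $\tilde G^{\tilde g} + (\tilde G^{\tilde g} + \es^1)$, obtained by moving in the $X^1$-component to its inserted option $\tilde G^{\tilde g} + \es^1$; this equals $\tilde G^{\tilde g} + \tilde G^{\tilde g} + \es^1 \in \PP$ by Theorem~\ref{prop:twice}, so it is in $\NN$. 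An option $G^g + (G'^{g'} + \es^1)$ (a move inside $X^1$ landing on an inserted option) is active and has the option $G'^{g'} + (G'^{g'} + \es^1) = G'^{g'} + G'^{g'} + \es^1 \in \PP$, obtained by moving inside $G^g$; so it is in $\NN$. The remaining option $G^g + W^w$ is in $\NN$ by hypothesis. Hence every option of $G^g + X^1$ is in $\NN$ and $o(G^g + X^1) = \PP$. (Commutativity and associativity of $+$, Theorem~\ref{thm:monoid}, are used silently to rearrange summands.)

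The step that takes the most thought is guessing the right shape of $X^1$: it must simultaneously force every move of $G^g$ to land in $\NN$ and still hand $H^h$ a winning reply. Inserting the options $G'^{g'} + \es^1$ is exactly what handles the first requirement, since in each position $\tilde G^{\tilde g} + X^1$ the first player can answer into the $\PP$-position $\tilde G^{\tilde g} + \tilde G^{\tilde g} + \es^1$ furnished by Theorem~\ref{prop:twice}; inserting $W^w$ handles the second. The only remaining subtlety is the directional asymmetry of the statement (we must finish with $G^g$ on the $\PP$-side and $H^h$ on the $\NN$-side), and this is absorbed by the possible double application of the auxiliary fact; in particular the argument needs neither an induction on $\birth$ nor any of the later results.
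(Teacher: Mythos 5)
Your proposal is correct and is essentially the paper's own proof: the paper uses the identical witness $X^1 \cong \{Y^y,\ G'^{g'_1}_1+\es^1,\ldots,G'^{g'_n}_n+\es^1\}^1$, verifies the three option shapes via Theorem \ref{prop:twice} in the same way, and handles the reversed-outcome case by the same double application of the construction. Isolating the construction as a named auxiliary fact is a presentational difference only.
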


\begin{proof}[Proof of Lemma \ref{lem:distinguish}]
Assume $G^g \neq H^h$.
Then there exists $Y^y \in \short$ such that $o(G^g + Y^y) \neq o(H^h + Y^y)$.
We first consider the case 
\begin{align}
o(G^g + Y^y) &= \NN, \label{eq:t67swugdp8cq}\\
o(H^h + Y^y) &= \PP. \label{eq:oce6y97xyqnq}
\end{align}
Let $G^g \cong \{G'^{g'_1}_1, G'^{g'_2}_2, \ldots, G'^{g'_n}_n\}^g$,
and define $X^1 \defeq\cong \{Y^y, G'^{g'_1}_1+\es^1, G'^{g'_2}_2+\es^1, \ldots, G'^{g'_n}_n+\es^1\}^1$.

We show that $X^1$ satisfies the desired condition: $o(G^g + X^1) = \PP$ and $o(H^h + X^1) = \NN$.
We have $o(H^h + X^1) = \NN$ because $H^h + X^1 \owns H^h + Y^y \in \PP$ by \eqref{eq:oce6y97xyqnq}.

We confirm $o(G^g + X^1) = \PP$ because every option of $G^g + X^1$ is in $\NN$ as follows.
\begin{itemize}
\item For any $G'^{g'_i}_i \in G^g$, the option $G'^{g'_i}_i + X^1 \in G^g + X^1$ has an option $G'^{g'_i}_i + G'^{g'_i}_i + \es^1 \in \PP$ by Theorem \ref{prop:twice}, so that $G'^{g'_i}_i + X^1 \in \NN$.
\item For any $G'^{g'_i}_i \in G^g$, the option $G^g + (G'^{g'_i}_i + \es^1) \in G^g + X^1$ has an option $G'^{g'_i}_i + G'^{g'_i}_i + \es^1 \in \PP$ by Theorem \ref{prop:twice}, so that $G^g + (G'^{g'_i}_i + \es^1)  \in \NN$.
\item The option $G^g + Y^y \in G^g + X^1$ is in $\NN$ by \eqref{eq:t67swugdp8cq}.
\end{itemize}
In the other case than \eqref{eq:t67swugdp8cq} and \eqref{eq:oce6y97xyqnq} (i.e., the case $o(H^h + Y^y) = \NN$ and $o(G^g + Y^y) = \PP$),
we can obtain $X^1 \in \short$ with $o(H^h + X^1) = \PP$ and $o(G^g + X^1) = \NN$ by the discussion above,
and thus we can reduce the problem to the case \eqref{eq:t67swugdp8cq} and \eqref{eq:oce6y97xyqnq}.
\end{proof}

We next prove Lemma \ref{lem:distinguish2}, which is a strengthened version of Lemma \ref{lem:distinguish}.

\begin{lemma}
\label{lem:distinguish2}
For any $n \in \uint$ and $G^g, H_1^{h_1}, \ldots, H_n^{h_n} \in \short$, 
if $G^g \neq H_i^{h_i}$ for any $i \in [n]$, then 
there exists $X^1 \in \short$ such that
\begin{align}
o(G^g + X^1) &= \PP, \label{eq:xs724yt5rqna}\\
o(H_i^{h_i} + X^1) &= \NN \,\,\text{for any}\,\,i \in [n]. \label{eq:fy00p2s82d7o}
\end{align}
\end{lemma}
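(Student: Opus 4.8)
The plan is to induct on $n$, using Lemma \ref{lem:distinguish} as the base case ($n = 1$; the case $n = 0$ is vacuous, taking any active $X^1$). For the inductive step, suppose $G^g \neq H_i^{h_i}$ for all $i \in [n]$ with $n \geq 2$. By the induction hypothesis applied to $H_1^{h_1}, \ldots, H_{n-1}^{h_{n-1}}$, there is a game $Y^1 \in \short$ with $o(G^g + Y^1) = \PP$ and $o(H_i^{h_i} + Y^1) = \NN$ for $i \in [n-1]$. By Lemma \ref{lem:distinguish} applied to $G^g \neq H_n^{h_n}$, there is a game $Z^1 \in \short$ with $o(G^g + Z^1) = \PP$ and $o(H_n^{h_n} + Z^1) = \NN$. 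The difficulty is that $Y^1$ may fail to distinguish $H_n^{h_n}$ from $G^g$ (it could give $o(H_n^{h_n} + Y^1) = \PP$), and $Z^1$ may fail on $H_1^{h_1}, \ldots, H_{n-1}^{h_{n-1}}$, so neither alone works. I need to splice them into a single $X^1$ that retains $o(G^g + X^1) = \PP$ while forcing every $H_i^{h_i}$ into $\NN$.

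The idea, mirroring the construction in the proof of Lemma \ref{lem:distinguish}, is to let $G^g \cong \{G_1'^{g_1'}, \ldots, G_k'^{g_k'}\}^g$ and set
\begin{align}
X^1 \defeq\cong \{\, Y^1,\ Z^1,\ G_1'^{g_1'} + \es^1,\ \ldots,\ G_k'^{g_k'} + \es^1 \,\}^1.
\end{align}
Then $o(H_i^{h_i} + X^1) = \NN$ for $i \in [n-1]$ because $H_i^{h_i} + X^1 \owns H_i^{h_i} + Y^1 \in \PP$, and $o(H_n^{h_n} + X^1) = \NN$ because $H_n^{h_n} + X^1 \owns H_n^{h_n} + Z^1 \in \PP$. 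It remains to check $o(G^g + X^1) = \PP$, i.e.\ that every option of $G^g + X^1$ lies in $\NN$. The options playing on $X^1$ toward $Y^1$ give $G^g + Y^1 \in \NN$ and toward $Z^1$ give $G^g + Z^1 \in \NN$ (both are $\PP$, so wait — these are in $\PP$, not $\NN$!).

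So the naive splice fails: adding both $Y^1$ and $Z^1$ as options of $X^1$ creates options $G^g + Y^1, G^g + Z^1 \in \PP$ of $G^g + X^1$, ruining $o(G^g + X^1) = \PP$. The fix I would pursue is to not reuse $Y^1$ directly but to apply Lemma \ref{lem:distinguish}'s construction idea at one more level: take $X^1 \defeq\cong \{\, W^1,\ G_1'^{g_1'} + \es^1,\ \ldots,\ G_k'^{g_k'} + \es^1 \,\}^1$ where $W^1$ is a single game that is a $\PP$-position when summed with each $H_i^{h_i}$ and an $\NN$-position when summed with $G^g$; but such a $W^1$ is exactly what we are constructing, so instead I would build $X^1$ recursively so that its options toward the ``$H$-side'' are all of the form $H_i^{h_i} +$ (something forcing $\PP$). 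Concretely: by induction hypothesis get $Y^1$ working for $H_1, \ldots, H_{n-1}$, and note $o(G^g + Y^1) = \PP$ means every option $Y'$ of $Y^1$ has $o(G^g + Y') = \NN$; set
\begin{align}
X^1 \defeq\cong \bigl(\, \{\, Y'^{y'} : Y'^{y'} \in Y^1 \,\} \cup \{ H_n^{h_n} + Z^1 \} \cup \{\, G_i'^{g_i'} + \es^1 : i \in [k] \,\} \,\bigr)^1.
\end{align}
Here $o(G^g + X^1) = \PP$ holds because every option is in $\NN$: the options $G^g + Y'^{y'}$ are in $\NN$ since $o(G^g+Y^1)=\PP$; the option $G^g + H_n^{h_n} + Z^1$ is in $\NN$ since it contains $G^g + G^g + Z^1 \in \PP$ (Theorem \ref{prop:twice}) — wait, that needs $Z^1$ to have the right parity, which it does as it is active; the options $G^g + (G_i'^{g_i'} + \es^1)$ contain $G_i'^{g_i'} + G_i'^{g_i'} + \es^1 \in \PP$; and the options $G_i'^{g_i'} + X^1$ contain $G_i'^{g_i'} + G_i'^{g_i'} + \es^1 \in \PP$. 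Conversely $o(H_i^{h_i} + X^1) = \NN$ for $i \leq n-1$ because... here is where I must verify $H_i^{h_i} + X^1$ still contains a $\PP$-option, which requires re-running the $n=1$ analysis to see that the options $\{Y'^{y'}\}$ of $X^1$ inherited from $Y^1$ together with the $G_i' + \es^1$ terms suffice — essentially redoing the proof of Lemma \ref{lem:distinguish} with $Y^1$ replaced by its option set.

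The main obstacle, and where I expect to spend the real effort, is precisely this last verification: proving $o(H_i^{h_i} + X^1) = \NN$ for $i \in [n-1]$ when $X^1$ is built from the \emph{options} of $Y^1$ rather than $Y^1$ itself. The clean route is to prove a sharper base lemma first — namely, re-examine the proof of Lemma \ref{lem:distinguish} to extract that the $X^1$ it produces can be taken of the form $\{(\text{options of } Y^y)\} \cup \{G_i' + \es^1\} \cup \{\text{one extra}\}^1$ — and then the induction closes because adding the single option $H_n^{h_n} + Z^1$ handles the $n$-th game without disturbing the others, since $G^g + H_n^{h_n} + Z^1 \in \NN$ by Theorem \ref{prop:twice}. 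If the parity bookkeeping ($g \lor 1$, etc.) cooperates — and it should, since every auxiliary game we adjoin is active — the construction goes through; tracking that bookkeeping carefully through the nested sums is the one place I would be cautious.
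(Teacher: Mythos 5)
There is a genuine gap, and the missing idea is to reverse the roles of $G^g$ and $H_i^{h_i}$ when invoking Lemma \ref{lem:distinguish}. The paper applies that lemma to each pair $(H_i^{h_i}, G^g)$ to obtain, for every $i \in [n]$, a game $Y_i^1$ with $o(H_i^{h_i} + Y_i^1) = \PP$ and $o(G^g + Y_i^1) = \NN$, and then sets $X^1 \cong \{G'^{g'_1}_1 + \es^1, \ldots, G'^{g'_m}_m + \es^1, Y_1^1, \ldots, Y_n^1\}^1$ in one shot, with no induction on $n$. Because each $o(G^g + Y_i^1) = \NN$, the options $G^g + Y_i^1$ of $G^g + X^1$ are harmless and $o(G^g + X^1) = \PP$ follows exactly as in Lemma \ref{lem:distinguish}, while $H_i^{h_i} + X^1 \owns H_i^{h_i} + Y_i^1 \in \PP$ gives $o(H_i^{h_i} + X^1) = \NN$. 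Your proposal instead insists on distinguishers $Y^1, Z^1$ satisfying $o(G^g + Y^1) = o(G^g + Z^1) = \PP$, which, as you yourself observe, cannot be adjoined as options of $X^1$ without destroying $o(G^g + X^1) = \PP$; that is precisely the obstruction the role reversal removes.

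Your attempted repair does not close the gap. First, $G^g + G^g + Z^1$ is not an option of $G^g + H_n^{h_n} + Z^1$: options of a sum arise by moving in exactly one component, and no move turns the component $H_n^{h_n}$ into $G^g$. Second, even where such a position did arise, Theorem \ref{prop:twice} only asserts $o(G^g + G^g + \es^b) = \PP$; it says nothing about $G^g + G^g + Z^1$ for a general active $Z^1$, and that sum can be in $\NN$ (e.g.\ $G^g \cong \es^0$ and $Z^1 \cong \es^{11}$ give $\es^{11} \in \NN$). Third, the verification you defer --- that $o(H_i^{h_i} + X^1) = \NN$ for $i \in [n-1]$ when $X^1$ is built from the options of $Y^1$ --- genuinely fails in general: $o(H_i^{h_i} + Y^1) = \NN$ guarantees a $\PP$-option of $H_i^{h_i} + Y^1$, but that option may have the form $H_i'^{h_i'} + Y^1$ (a move on the $H_i$ component), and such an option does not survive when $Y^1$ is replaced by its option set inside $X^1$. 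So the induction on $n$ is not salvageable as written; the one-step construction with reversed distinguishers is the way through.
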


\begin{proof}[Proof of Lemma \ref{lem:distinguish2}]
Assume that $G^g \neq H_i^{h_i}$ for any $i \in [n]$.
By Lemma \ref{lem:distinguish}, for any $i \in [n]$,
there exists $Y_i^1 \in \short$ satisfying
\begin{align}
o(G^g + Y_i^1) = \NN,  \label{eq:t7ao87w6k9hw}\\
o(H_i^{h_i} + Y_i^1) = \PP. \label{eq:k64vwslgg78d}
\end{align}
Let $G^g \cong \{G'^{g'_1}_1, G'^{g'_2}_2, \ldots, G'^{g'_m}_m\}^g$,
and define $X^1 \defeq\cong \{G'^{g'_1}_1 + \es^1, G'^{g'_2}_2 + \es^1, \ldots, G'^{g'_m}_m + \es^1, Y_1^1, Y_2^1, \ldots, Y_n^1\}^1$.

We show that $X^1$ satisfies \eqref{eq:xs724yt5rqna} and \eqref{eq:fy00p2s82d7o}.
First, we confirm $o(G^g + X^1) = \PP$ because every option of $G^g + X^1$ is in $\NN$ as follows.
\begin{itemize}
\item For any $G'^{g'_i}_i \in G^g$, the option $G'^{g'_i}_i + X^1 \in G^g + X^1$ has an option $G'^{g'_i}_i + G'^{g'_i}_i  + \es^1 \in \PP$ by Theorem \ref{prop:twice}, so that $o(G'^{g'_i}_i + X^1) = \NN$.
\item For any $G'^{g'_i}_i \in G^g$, the option $G^g + (G'^{g'_i}_i + \es^1) \in G^g + X^1$ has an option $G'^{g'_i}_i + G'^{g'_i}_i  + \es^1 \in \PP$ by Theorem \ref{prop:twice}, so that $o(G^g + (G'^{g'_i}_i + \es^1)) = \NN$.
\item For any $Y_i^1 \in X^1$, the option $G^g + Y_i^1 \in G^g + X^1$ is in $\NN$ by \eqref{eq:t7ao87w6k9hw}.
\end{itemize}
Also, for any $i \in [n]$, we have $o(H_i^{h_i} + X^1) = \NN$
because $H_i^{h_i} + X^1 \owns H_i^{h_i} + Y_i^1 \in \PP$ by \eqref{eq:k64vwslgg78d}.
\end{proof}

We now give the proof of Theorem \ref{thm:distinguish} using Lemma \ref{lem:distinguish2} shown above.

\begin{proof}[Proof of Theorem \ref{thm:distinguish}]
(Proof of (a) $\implies$ (b))
By the assumption (a), there exists $X^1 \in \short$ satisfying \eqref{eq:wtv7j5xqrzec} and \eqref{eq:zig7xesatpwr}.
Choose $i, j \in [m]$ arbitrarily.
We have
\begin{align}
o(G_i^{g_i} + X^1) &= \PP, \label{eq:6541fto1cc6r}\\
o(G_j^{g_j} + X^1) &= \PP \label{eq:g3x47najh1ps}
\end{align}
by \eqref{eq:wtv7j5xqrzec}.
Hence,
\begin{align}
G'^{g'} \neq G_j^{g_j} \,\,\text{for any}\,\, G'^{g'} \in G^{g_i}_i \label{eq:gbwcfbzesnly}
\end{align}
because $x^1$ distinguishes $G'^{g'}$ from $G_j^{g_j}$:
\begin{align}
o(G'^{g'} + X^1) \eqlab{A}= \NN \neq \PP \eqlab{B}= o(G_j^{g_j} + X^1),
\end{align}
where
(A) follows from $G'^{g'} + X^1 \in G_i^{g_i} + X^1 \in \PP$ by \eqref{eq:6541fto1cc6r},
and (B) follows from \eqref{eq:g3x47najh1ps}.
By symmetry, we also have
\begin{align}
G_i^{g_i} \neq G'^{g'} \,\,\text{for any}\,\, G'^{g'} \in G^{g_j}_j. \label{eq:sk0p4mwgpfsq}
\end{align}
Therefore, we obtain $G^{g_i}_i \bowtie G^{g_j}_j$ by \eqref{eq:gbwcfbzesnly} and \eqref{eq:sk0p4mwgpfsq}.
Additionally, \eqref{eq:wtv7j5xqrzec} and \eqref{eq:zig7xesatpwr} clearly imply that $G^{g_i}_i \neq H^{h_j}_j$ for any $i \in [m], j \in [n]$.

(Proof of (b) $\implies$ (a))
Let $G_i^{g_i} \cong \{G'^{g'_{i,1}}_{i,1}, G'^{g'_{i,2}}_{i,2}, \ldots, G'^{g'_{i,p_i}}_{i, p_i}\}$ for $i \in [m]$.
By the assumption (b1), we have $G'^{g'_{i,k}}_{i,k} \neq G^{g_j}_j$ for any $G'^{g'_{i,k}}_{i,k} \in G^{g_i}_i$.
Hence, by Lemma \ref{lem:distinguish2},
for $i \in [m]$ and $k \in [p_i]$,
there exists $Y_{i,k}^1 \in \short$ such that
\begin{align}
o(G'^{g'_{i,k}}_{i,k} + Y_{i,k}^1) &= \PP, \label{eq:f85ar58892lx}\\
o(G^{g_j}_j + Y_{i,k}^1) &= \NN \,\,\text{for any}\,\,j \in [m]. \label{eq:d3gub08knex6}
\end{align}
By Lemma \ref{lem:distinguish2} and the assumption (b2),
for $i \in [n]$, there exists $Z_i^1 \in \short$ such that
\begin{align}
o(H^{h_i}_i + Z_i^1) &= \PP,  \label{eq:vwszaddq9hqq}\\
o(G^{g_j}_j + Z_i^1) &= \NN \,\,\text{for any}\,\,j \in [m]. \label{eq:fem1mjxqk13q}
\end{align}
Define $X \defeq= \left( \bigcup_{i = 1}^m\{Y_{i, 1}^1, Y_{i, 2}^1, \ldots, Y_{i, p_i}^1\} \right) \cup \{Z_1^1, Z_2^1, \ldots, Z_n^1\}$.
We show that $X^1$ satisfies \eqref{eq:wtv7j5xqrzec} and \eqref{eq:zig7xesatpwr}.
First, for any $i \in [m]$, we have $o(G^{g_i}_i + X^1) = \PP$ because every option of $G^{g_i}_i + X^1$ is in $\NN$ as follows.
\begin{itemize}
\item For any $G'^{g'_{i, k}}_{i, k} \in G^{g_i}_i$, the option $G'^{g'_{i, k}}_{i, k} + X^1 \in G^{g_i}_i + X^1$ has an option $G'^{g'_{i, k}}_{i, k} + Y_{i, k}^1 \in \PP$ by \eqref{eq:f85ar58892lx}, so that $o(G'^{g'_{i, k}}_{i, k} + X^1) = \NN$.
\item For any $Y_{j, k}^1 \in X^1$, the option $G^{g_i}_i  + Y_{j, k}^1 \in G^{g_i}_i + X^1$ is in $\NN$ by \eqref{eq:d3gub08knex6}.
\item For any $Z_j^1 \in X^1$, the option $G^{g_i}_i  + Z_j^1 \in G^{g_i}_i + X^1$ is in $\NN$ by \eqref{eq:fem1mjxqk13q}.
\end{itemize}
Also, we have $o(H^{h_i}_i + X^1) = \NN$ because $H^{h_i}_i + X^1 \owns H^{h_i}_i + Z_i^1 \in \PP$ by \eqref{eq:vwszaddq9hqq}.
\end{proof}

By applying Theorem \ref{thm:distinguish} with $m = 2$ and $n = 0$, we obtain the following corollary.

\begin{corollary}
\label{prop:link}
For any $G^g, H^h \in \short$, the following equivalence holds:
\begin{align}
G^g \bowtie H^h \iff \exists X^1 \in \short \,\,\text{s.t.}\,\, o(G^g + X^1) = o(H^h + X^1) = \PP.
\end{align}
\end{corollary}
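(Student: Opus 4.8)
The plan is to deduce this directly from Theorem \ref{thm:distinguish} by specializing to $m = 2$ and $n = 0$: take $G_1^{g_1} \defeq\cong G^g$ and $G_2^{g_2} \defeq\cong H^h$ as the games to be forced into $\PP$, and take the family $H_1^{h_1}, \ldots, H_n^{h_n}$ to be empty. Under this choice, condition (a) of Theorem \ref{thm:distinguish} asks exactly for an $X^1 \in \short$ with $o(G^g + X^1) = o(H^h + X^1) = \PP$, since the requirement \eqref{eq:zig7xesatpwr} on the $H_j^{h_j}$ is vacuous when $n = 0$; this is precisely the right-hand side of the asserted equivalence. So it remains only to check that, under the same specialization, condition (b) of Theorem \ref{thm:distinguish} is equivalent to $G^g \bowtie H^h$.

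Since $n = 0$, condition (b2) is vacuous, so (b) reduces to (b1), which for $m = 2$ unpacks into the four statements $G^g \bowtie G^g$, $G^g \bowtie H^h$, $H^h \bowtie G^g$, and $H^h \bowtie H^h$ (the pair $(i,j)$ ranging over $[2] \times [2]$, including the diagonal). I would handle the diagonal cases $G^g \bowtie G^g$ and $H^h \bowtie H^h$ using Theorem \ref{thm:option-neq}: for every $G'^{g'} \in G^g$ one has $G'^{g'} \neq G^g$, which is exactly both defining conditions of $G^g \bowtie G^g$, and similarly for $H^h$; hence these two always hold. For the off-diagonal cases I would note that $\bowtie$ is symmetric — this is immediate because $\neq$ (the negation of the equivalence relation $=$) is symmetric, so condition (a) in the definition of $G^g \bowtie H^h$ is literally condition (b) in the definition of $H^h \bowtie G^g$ and vice versa. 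Therefore (b1) holds if and only if $G^g \bowtie H^h$ holds, and combining this with the equivalence (a) $\iff$ (b) supplied by Theorem \ref{thm:distinguish} completes the argument.

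I do not anticipate any real obstacle: all the mathematical content lies in Theorem \ref{thm:distinguish}, and the corollary is obtained purely by bookkeeping — verifying that $\bowtie$ is reflexive (Theorem \ref{thm:option-neq}) and symmetric (symmetry of $\neq$), and that passing to $n = 0$ genuinely makes both the $\NN$-conditions and (b2) vacuous. The only point worth spelling out with a little care is this last vacuity, since it is what collapses the general two-sided statement of Theorem \ref{thm:distinguish} to the clean equivalence claimed here.
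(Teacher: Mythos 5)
Your proposal is correct and matches the paper's approach exactly: the paper derives this corollary by the same specialization of Theorem \ref{thm:distinguish} to $m=2$, $n=0$, leaving the bookkeeping implicit. Your explicit verification that the diagonal conditions $G^g \bowtie G^g$ and $H^h \bowtie H^h$ in (b1) always hold (via Theorem \ref{thm:option-neq}) is the one detail the paper glosses over, and you handle it correctly.
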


\subsection{Simplification Theorem}
\label{subsec:simplify}

In this section, we prove Theorem \ref{prop:simplify}, which reduces a given game to an equivalent and simpler game.
To state the theorem, we first introduce the following definition.

\begin{definition}
\label{def:cover}
An option $G'^{g'}$ of $G^g \in \short$ is said to be \emph{uncovered by $H^h \in \short$}
if for any $H'^{h'} \in H^h$, it holds that $G'^{g'} \neq H'^{h'}$.
A game $G^g$ is said to be \emph{covered by} a game $H^h$ if $G^g$ has no options uncovered by $H^h$.
\end{definition}
Namely, a game $G^g$ is covered by a game $H^h$ if and only if
for any $G'^{g'} \in G^g$, there exists $H'^{h'} \in H^h$ such that $H'^{h'} = G'^{g'}$.

We now present the desired theorem that gives a sufficient and necessary condition of $G^g = H^h$, where $G^g$ is covered by $H^h$.

\begin{theorem}[Simplification Theorem]
\label{prop:simplify}
For any $G^g, H^h \in \short$, if $G^g$ is covered by $H^h$, then the following conditions (a) and (b) are equivalent.
\begin{enumerate}[(a)]
\item $G^g = H^h$.
\item The following conditions (b1) and (b2) hold.
\begin{itemize}
\item[(b1)] $\type(G^g) = \type(H^h)$.
\item[(b2)] For any option $H'^{h'}$ of $H^h$ uncovered by $G^g$, there exists $H''^{h''} \in H'^{h'}$ such that $H''^{h''} = G^g$.
\end{itemize}
\end{enumerate}
\end{theorem}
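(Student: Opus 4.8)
The plan is to prove the two implications $(a)\Rightarrow(b)$ and $(b)\Rightarrow(a)$ separately, keeping the hypothesis that $G^g$ is covered by $H^h$ fixed throughout.

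For $(a)\Rightarrow(b)$: condition $(b1)$ is immediate from Corollary~\ref{cor:equiv}~(ii). For $(b2)$ I would argue by contraposition. Suppose some option $H'^{h'}$ of $H^h$ is uncovered by $G^g$ and \emph{no} option of $H'^{h'}$ equals $G^g$. Unwinding the definitions, this says exactly $G^g \bowtie H'^{h'}$: every $G'^{g'}\in G^g$ satisfies $G'^{g'}\neq H'^{h'}$ (that is the meaning of ``uncovered by $G^g$''), and every $H''^{h''}\in H'^{h'}$ satisfies $G^g\neq H''^{h''}$. By Corollary~\ref{prop:link} there is an active $X^1$ with $o(G^g+X^1)=o(H'^{h'}+X^1)=\PP$. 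Since $H'^{h'}\in H^h$, the game $H'^{h'}+X^1$ is an option of $H^h+X^1$, and $H^h+X^1$ is active (its activeness is $h\lor 1=1$), so $o(H^h+X^1)=\NN\neq\PP=o(G^g+X^1)$, contradicting $G^g=H^h$.

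For $(b)\Rightarrow(a)$: by $(b1)$ we have $g=h$, so write $H^h=H^g$; it then suffices to verify hypotheses $(a)$ and $(b)$ of Lemma~\ref{lem:equal} for the pair $(G^g,H^g)$. Hypothesis $(b)$ of that lemma is immediate from $G^g$ being covered by $H^g$: if $o(G'^{g'}+X^x)=\PP$ for some $G'^{g'}\in G^g$, choose $H'^{h'}\in H^g$ with $H'^{h'}=G'^{g'}$, and then $o(H'^{h'}+X^x)=\PP$. For hypothesis $(a)$, fix $X^x$ with $g\lor x=1$, assume $o(G^g+X^x)=\PP$, and take $H'^{h'}\in H^g$; the goal is $o(H'^{h'}+X^x)=\NN$. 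If $H'^{h'}$ is covered by $G^g$, pick $G'^{g'}\in G^g$ with $G'^{g'}=H'^{h'}$; then $o(H'^{h'}+X^x)=o(G'^{g'}+X^x)$, and $G'^{g'}+X^x$ is an option of the active game $G^g+X^x\in\PP$, so it lies in $\NN$. If $H'^{h'}$ is uncovered by $G^g$, then $(b2)$ gives $H''^{h''}\in H'^{h'}$ with $H''^{h''}=G^g$, so $o(H''^{h''}+X^x)=o(G^g+X^x)=\PP$; since $H''^{h''}+X^x$ is an option of $H'^{h'}+X^x$, it remains only to check that $H'^{h'}+X^x$ is active, i.e.\ $h'\lor x=1$.

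That last point is the one subtlety, and I expect it to be the main obstacle. When $x=1$ it is trivial. When $x=0$ we have $g=1$, and I claim $\type(G^1)=1_{\forall 1}$: otherwise $G^1$ has an inactive option $G'^{0}$, whence $G'^{0}+X^0$ is an inactive (hence $\PP$) option of the active game $G^1+X^0$, forcing $o(G^1+X^0)=\NN$ and contradicting the assumption $o(G^1+X^0)=\PP$. Then $(b1)$ gives $\type(H^1)=1_{\forall 1}$, so every option of $H^1$ is active, i.e.\ $h'=1$ and $h'\lor x=1$ as needed. This activeness bookkeeping — ruling out inactive options of $H^g$ precisely when $X^x$ is inactive — is the only place where $(b1)$ is genuinely used in the $(b)\Rightarrow(a)$ direction; everything else is a direct unwinding of the definitions of ``covered'' and $\bowtie$ together with Corollaries~\ref{cor:equiv} and~\ref{prop:link} and Lemma~\ref{lem:equal}.
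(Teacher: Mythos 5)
Your proposal is correct, and the $(b)\Rightarrow(a)$ direction is essentially identical to the paper's: the same reduction via (b1) to $H^h\cong H^g$, the same verification of the two hypotheses of Lemma~\ref{lem:equal}, and the same activeness bookkeeping for uncovered options of $H^g$ (the paper splits into three cases by $\type(G^g)$, you split by $x$, but the content --- ruling out $\type(G^g)=1_{\exists 0}$ when $x=0$ and then transferring $1_{\forall 1}$ to $H^g$ via (b1) --- is the same). Where you genuinely diverge is in $(a)\Rightarrow(b2)$: the paper argues by contradiction but then builds an explicit distinguisher from scratch, invoking Lemma~\ref{lem:distinguish} once for each option $G'^{g'_i}_i$ of $G^g$ and once for each option $H''^{h''_i}_i$ of the offending $H'^{h'}$, and assembling these into a single game $X^1\cong\{Y_1^1,\ldots,Y_m^1,Z_1^1,\ldots,Z_n^1\}^1$ whose outcome analysis occupies most of that half of the proof. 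You instead observe that the negation of (b2) for a fixed uncovered option $H'^{h'}$ is \emph{precisely} the relation $G^g\bowtie H'^{h'}$, so Corollary~\ref{prop:link} hands you an active $X^1$ with $o(G^g+X^1)=o(H'^{h'}+X^1)=\PP$, and then $H^h+X^1$ is active with an option in $\PP$, hence in $\NN$. This is shorter and reuses machinery the paper has already established (indeed it is the same move the paper itself makes in Lemma~\ref{prop:not-link}); the paper's explicit construction buys nothing extra here beyond self-containedness, since Corollary~\ref{prop:link} is proved before Subsection~\ref{subsec:simplify}. Both arguments are valid; yours is the more economical.
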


\begin{proof}[Proof of Theorem \ref{prop:simplify}]
[Proof of (b) $\implies$ (a)]
By the assumption (b1), we have $g = h$, so that $H^h \cong H^g$.
Hence, it suffices to show that $G^g$ and $H^g$ satisfy the conditions (a) and (b) of Lemma \ref{lem:equal}.
To show that, choose $X^x \in \short$ with
\begin{align}
\label{eq:7bc2h9wuvjeb}
g \lor x = 1
\end{align}
arbitrarily.

(Proof of Lemma \ref{lem:equal} (a))
Assume
\begin{align}
\label{eq:n8swoek0ownd}
o(G^g + X^x) = \PP
\end{align}
and choose $H'^{h'} \in H^g$ arbitrarily. 
We show $o(H'^{h'} + X^x) = \NN$ dividing into two cases by whether the option $H'^{h'}$ of $H^g$ is uncovered by $G^g$ or not.
\begin{itemize}
\item The case where $H'^{h'} = G'^{g'}$ for some $G'^{g'} \in G^g$: We have
\begin{align}
o(H'^{h'} + X^x) \eqlab{A}= o(G'^{g'} + X^x) \eqlab{B}= \NN,
\end{align}
where
(A) follows from $H'^{h'} = G'^{g'}$,
and (B) follows from $o(G^g + X^x) = \PP$ and \eqref{eq:7bc2h9wuvjeb}.
\item The case where the option $H'^{h'}$ of $H^g$ is uncovered by $G^g$: We confirm $H'^{h'} + X^x$ is active (i.e., $h' \lor x = 1$) dividing into three cases by $\type(G^g)$ as follows.
\begin{itemize}
\item The case $\type(G^g) = 0$: Then $h' \lor x \geq x = 0 \lor x = g \lor x = 1$ by \eqref{eq:7bc2h9wuvjeb}. 
\item The case $\type(G^g) = 1_{\exists 0}$: Then $g' = 0$ for some $G'^{g'} \in G^g$.
By \eqref{eq:7bc2h9wuvjeb} and \eqref{eq:n8swoek0ownd}, we have $o(G'^{g'} + X^x) = \NN$, so that $g' \lor x = 1$.
Hence,  we obtain $h' \lor x \geq x = 0 \lor x = g' \lor x = 1$.
\item The case $\type(G^g) = 1_{\forall 1}$: By the condition (b1), we have $\type(H^g) = 1_{\forall 1}$, which leads to $h' = 1$.
Hence, we have $h' \lor x = 1 \lor x = 1$.
\end{itemize}
Next, we show that $H'^{h'} + X^x$ has an option in $\PP$.
By the condition (b2), there exists $H''^{h''} \in H'^{h'}$ such that $H''^{h''} = G^g$, so that
\begin{align}
H'^{h'} + X^x \owns H''^{h''} + X^x \eqlab{A}= G^g + X^x \eqlab{B}\in \PP,
\end{align}
where (A) follows from $H''^{h''} = G^g$,
and (B) follows from \eqref{eq:n8swoek0ownd}.
\end{itemize}

(Proof of Lemma \ref{lem:equal} (b))
Assume that there exists $G'^{g'} \in G^g$ such that $o(G'^{g'} + X^x) = \PP$.
Since $G^g$ is covered by $H^g$, there exists $H'^{h'} \in H^g$ such that $H'^{h'} = G'^{g'}$,
so that 
\begin{align}
H^g + X^x \owns H'^{h'} + X^x \eqlab{A}= G'^{g'} + X^x \in \PP,
\end{align}
where 
(A) follows from $H'^{h'} = G'^{g'}$.

[Proof of (a) $\implies$ (b)]
The condition (b1) holds directly from Corollary \ref{cor:equiv} (ii).

Now, we show (b2) by contradiction.
Let $G^g \cong \{G'^{g'_1}_1, G'^{g'_2}_2, \ldots, G'^{g'_m}_m\}^g$.
Suppose that (b2) is false, that is, assume that there exists $H'^{h'} \defeq\cong \{H''^{h''_1}_1, H''^{h''_2}_2, \ldots, H''^{h''_n}_n\}^{h'} \in H^h$ such that
\begin{align}
H'^{h'} &\neq G'^{g'_i}_i \,\,\text{for any}\,\, i \in [m] \,\,\text{(i.e., the option}\,\, H'^{h'} \,\,\text{of}\,\,  H^h \,\,\text{is uncovered by}\,\, G^g), \label{eq:dm3sqe7x8zid} \\
H''^{h''_i}_i &\neq G^g \,\,\text{for any}\,\, i \in [n]. \label{eq:mufidizutnml}
\end{align}
By Lemma \ref{lem:distinguish} and \eqref{eq:dm3sqe7x8zid}, for $i \in [m]$, there exists $Y_i^1 \in \short$ such that
\begin{align}
o(G'^{g'_i}_i + Y_i^1) &= \PP, \label{eq:bqfbk36spskl}\\
o(H'^{h'} + Y_i^1) &= \NN. \label{eq:ack7vsbcv7cb}
\end{align}
By Lemma \ref{lem:distinguish} and \eqref{eq:mufidizutnml}, for $i \in [n]$, there exists $Z_i^1 \in \short$ such that
\begin{align}
o(H''^{h''_i}_i + Z_i^1) &= \PP, \label{eq:jrfz207xuyr0}\\
o(G^g + Z_i^1) &= \NN. \label{eq:yxn790dw5oi2}
\end{align}
Define $X^1 \defeq\cong \{Y_1^1, Y_2^1, \ldots, Y_m^1, Z_1^1, Z_2^1, \ldots, Z_n^1\}^1$.
We show that $X^1$ distinguishes $G^g$ from $H^h$.
First, we have $o(G^g + X^1) = \PP$ because every option of $G^g + X^1$ is in $\NN$ as follows.

\begin{itemize}
\item For any $G'^{g'_i}_i \in G^g$, the option $G'^{g'_i}_i + X^1 \in G^g + X^1$ has an option $G'^{g'_i}_i + Y_i^1 \in \PP$ by  \eqref{eq:bqfbk36spskl}, so that $G'^{g'_i}_i + X^1 \in \NN$.
\item For any $Y_i^1 \in X^1$, the option $G^g + Y_i^1 \in G^g + X^1$ has an option $G'^{g'_i}_i + Y_i^1 \in \PP$ by  \eqref{eq:bqfbk36spskl}, so that $G^g + Y_i^1 \in \NN$.
\item For any $Z_i^1 \in X^1$, the option $G^g + Z_i^1 \in G^g + X^1$ is in $\NN$ by \eqref{eq:yxn790dw5oi2}.
\end{itemize}
We also have $o(H^h + X^1) = \NN$ because
the option $H'^{h'} + X^1 \in H^h + X^1$ is in $\PP$ as follows.
\begin{itemize}
\item For any $H''^{h''_i}_i \in H'^{h'}$, the option $H''^{h''_i}_i + X^1 \in H'^{h'} + X^1$ has an option $H''^{h''_i}_i + Z_i^1 \in \PP$
by \eqref{eq:jrfz207xuyr0}, so that $H''^{h''_i}_i + X^1 \in \NN$.
\item For any $Y_i^1 \in X^1$, the option $H'^{h'} + Y_i^1 \in H'^{h'} + X^1$ is in $\NN$ by \eqref{eq:ack7vsbcv7cb}.
\item For any $Z_i^1 \in X^1$, the option $H'^{h'} + Z_i^1 \in H'^{h'} + X^1$ has an option $H''^{h''_i}_i + Z_i^1 \in \PP$ by \eqref{eq:jrfz207xuyr0}, so that $H'^{h'} + Z_i^1 \in \NN$.
\end{itemize}
This concludes $G^g \neq H^h$, which contradicts the assumption (a).
\end{proof}

\subsection{Canonical Form Theorem}
\label{subsec:canonical}

In this subsection, we define a unique ``canonical'' game for each equivalence class modulo $=$.

An option $G'^{g'}$ of $G^g$ is said to be reversible if
the move to $G'^{g'}$ can be reverted to some option $G''^{g''}$ of $G'^{g'}$ with $G''^{g''} = G^g$,
formally defined as follows.

\begin{definition}
\label{def:reversible}
Let $G^g \in \short$, and let $G'^{g'}$ be an option of $G^g$.
For $G''^{g''} \in G'^{g'}$, the option $G'^{g'}$ of $G^g$ is said to be \emph{reversible through $G''^{g''}$} if $G''^{g''} = G^g$.
The option $G'^{g'}$ of $G^g$ is said to be \emph{reversible} if $G'^{g'}$ is reversible through $G''^{g''}$ for some $G''^{g''} \in G'^{g'}$.
\end{definition}

We define a canonical game as a game such that it and all of its subpositions have no reversible options as follows.

\begin{definition}
\label{def:canonical}
A game $G^g \in \short$ is said to be \emph{canonical} if the following conditions (a) and (b) hold.
\begin{enumerate}[(a)]
\item Every option of $G^g$ is canonical.
\item No option of $G^g$ is reversible.
\end{enumerate}
\end{definition}

It is not trivial that each equivalence class has exactly one canonical game.
We show the existence and uniqueness of the canonical game for each equivalence class in Subsubsections \ref{subsubsec:canonical-exist}
 and \ref{subsubsec:canonical-unique}, respectively,  to prove the following theorem.

\begin{theorem}[Canonical Form Theorem]
\label{thm:canonical}
For any $G^g \in \short$, there exists a unique canonical game $H^h \in \short$ with $G^g = H^h$.
\end{theorem}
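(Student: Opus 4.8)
The plan is to establish the two halves—existence and uniqueness—separately, mirroring the structure already announced in Subsubsections \ref{subsubsec:canonical-exist} and \ref{subsubsec:canonical-unique}, and then combine them. For \emph{existence}, I would argue by induction on $\birth(G^g)$. Given $G^g$, first replace each option by its canonical form (available by the induction hypothesis), using the Replacement Lemma (Theorem \ref{prop:replace}) to stay within the equivalence class; this makes every option canonical. Then, as long as some option $G'^{g'}$ is reversible—say through $G''^{g''}$ with $G''^{g''} = G^g$—I would \emph{bypass} it: remove $G'^{g'}$ and adjoin the options of $G''^{g''}$ in its place, and show the resulting game is still equivalent to $G^g$. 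The equivalence here should follow from Lemma \ref{lem:equal} together with the fact $G''^{g''} = G^g$, checking conditions (a) and (b) by the usual reversibility bookkeeping (an attack on the new game via an option of $G''^{g''}$ is answered exactly as the corresponding attack on $G^g$ was, since $G''^{g''}=G^g$; note one must confirm the activeness bit is unchanged, which holds because $g'' = g$ by Corollary \ref{cor:equiv} (ii) and the new option set has the same type). Each bypass strictly decreases a suitable complexity measure (e.g.\ the multiset of $\birth$-values of options, ordered appropriately, or the total number of subpositions), so the process terminates in a game all of whose options are canonical and none reversible—that is, a canonical game equivalent to $G^g$.

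For \emph{uniqueness}, suppose $G^g$ and $H^h$ are both canonical and $G^g = H^h$; I would prove $G^g \cong H^h$ by induction on $\birth(G^g) + \birth(H^h)$. By Corollary \ref{cor:equiv} (ii) we get $g = h$. It remains to show the two option sets coincide. Take any option $G'^{g'} \in G^g$. Since $G^g \neq G'^{g'}$ (Theorem \ref{thm:option-neq}), $G'^{g'}$ is \emph{not} reversible through any of its own options into $G^g$; but $G^g = H^h$, so no option of $G'^{g'}$ equals $H^h$ either. I would then use the Simplification Theorem (Theorem \ref{prop:simplify}) or a direct distinguishing argument (Theorem \ref{thm:distinguish} / Lemma \ref{lem:distinguish}) to force the existence of an option $H'^{h'} \in H^h$ with $H'^{h'} = G'^{g'}$: roughly, if no option of $H^h$ were equivalent to $G'^{g'}$, then since $G'^{g'}$ is not reversible (no option of it equals $G^g = H^h$), the pair $(G^g, H^h)$ with the extra option $G'^{g'}$ could be distinguished, contradicting $G^g = H^h$. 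This says $G'^{g'}$ is covered by $H^h$ in the sense of Definition \ref{def:cover}, and symmetrically every option of $H^h$ is matched in $G^g$. Then for matched options $G'^{g'} = H'^{h'}$, both being canonical (condition (a) of Definition \ref{def:canonical}) and of smaller total birthday, the induction hypothesis gives $G'^{g'} \cong H'^{h'}$. Hence the option sets are equal, so $G^g \cong H^h$.

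Combining: the existence argument produces some canonical $H^h$ with $G^g = H^h$, and the uniqueness argument shows any two such are identical as elements of $\short$. This is exactly the statement of Theorem \ref{thm:canonical}.

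\textbf{Main obstacle.} I expect the delicate point to be the uniqueness step—specifically, showing that an option $G'^{g'}$ of a canonical $G^g$ must be \emph{equivalent to an actual option} of $H^h$, not merely ``covered'' in some weaker averaged sense. The naive attempt is to feed the distinguishing game for $(G'^{g'}, H^h)$ back in, but one must simultaneously ensure it does not accidentally distinguish $G^g$ from $H^h$; this is precisely where non-reversibility of $G'^{g'}$ (no option of $G'^{g'}$ equals $G^g$) is needed to block the ``reversal'' response, and getting the activeness bits and the $\bowtie$/type conditions of Theorem \ref{thm:distinguish} to line up will require care. A secondary subtlety is verifying termination of the bypassing procedure in the existence proof: a single bypass can \emph{increase} the number of options, so the decreasing measure must be chosen to account for that (ordering the multiset of option birthdays, or counting subpositions with multiplicity, should work, but the argument needs to be spelled out).
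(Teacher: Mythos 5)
Your overall strategy coincides with the paper's: existence by canonicalizing options via the Replacement Lemma and then repeatedly bypassing reversible options, with termination controlled by a subposition count with multiplicity (the paper's measure is exactly $f(J^j) = 1 + \sum_{J'^{j'} \in J} f(J'^{j'})$, and your worry about a bypass increasing the number of options is the right one); uniqueness by showing a canonical game is covered by any equivalent game, using non-reversibility to rule out the ``some option of $G'^{g'}$ equals $H^h$'' branch of $G'^{g'} \not\bowtie H^h$, then inducting on total birthday. This is Theorems \ref{thm:bypass}, \ref{thm:canonical-exist}, \ref{prop:canonial-include}, and \ref{prop:canonical-unique} in the paper, and your identification of the delicate point (Corollary \ref{prop:link} via the Distinguishing Theorem) is accurate.

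There is, however, one step that fails as you describe it: the claim that the bypass equivalence ``should follow from Lemma \ref{lem:equal}.'' Hypothesis (b) of that lemma demands that whenever some option $G'^{g'}$ of $G^g$ satisfies $o(G'^{g'} + X^x) = \PP$, the bypassed game $H^g$ must have an \emph{option} $H'^{h'}$ with $o(H'^{h'} + X^x) = \PP$; but when $G'^{g'}$ is the removed reversible option, the winning reply in $H^g + X^x$ may lie in the $X^x$ component, which condition (b) cannot see. Concretely, take $G^g \cong \es^{010}$ with reversible option $G'^{g'} \cong \es^{01}$ through $G''^{g''} \cong \es^0 = \es^{010}$; bypassing yields $H^g \cong \es^0$, which has no options at all, yet $X^1 \cong \es^{11}$ gives $o(\es^{01} + \es^{11}) = o(\es^{111}) = \PP$, so condition (b) is violated even though $G^g = H^g$ does hold. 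The paper therefore proves the bypass equivalence (Theorem \ref{thm:bypass}) by its own direct induction on $\birth(X^x)$, in which the case ``$G''^{g''} + X^x$ escapes to $\PP$ by a move in $X^x$'' is handled by the induction hypothesis $o(H^g + X'^{x'}) = o(G^g + X'^{x'})$. You need to replace the appeal to Lemma \ref{lem:equal} by such an argument; the rest of your proof then goes through as in the paper.
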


\subsubsection{The Existence of a Canonical Game}
\label{subsubsec:canonical-exist}

We prove that there exists at least one canonical game in every equivalence class.
To prove that, we first show Theorem \ref{thm:bypass}, which states that  if a game possesses a reversible option, then ``bypassing'' it yields an equivalent and simpler game.

\begin{theorem}
\label{thm:bypass}
For any $G^g, G'^{g'}, G''^{g''} \in \short$ with $G''^{g''} \in G'^g \in G^g$, if $G^g = G''^{g''}$, then $G^g = ((G\setminus \{G'^{g'}\}) \cup G'')^g$.
\end{theorem}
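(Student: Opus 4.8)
The plan is to write $H^g$ for $((G\setminus\{G'^{g'}\})\cup G'')^g$, where $G''$ is the option set of the game $G''^{g''}$, and to prove $G^g = H^g$ by establishing $o(G^g+X^x) = o(H^g+X^x)$ for every $X^x\in\short$ by induction on $\birth(X^x)$. A preliminary observation does most of the bookkeeping: since $G^g = G''^{g''}$, Corollary \ref{cor:equiv}(ii) gives $\type(G''^{g''}) = \type(G^g)$, hence $g'' = g$; thus $G^g$, $G''^{g''}$ and $H^g$ all carry activeness $g$, the game $G''^{g''}+X^x$ has the same activeness $g\lor x$ as $G^g+X^x$ and $H^g+X^x$, and every option of $G''^{g''}$ is literally an option of $H^g$. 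It would be natural to feed the pair $(G^g,H^g)$ into Lemma \ref{lem:equal}, but condition (b) there appears to fail for those $X^x$ against which the deleted option $G'^{g'}$ is itself a $\PP$-witness, since $H^g$ need not contain a replacement option for that particular $X^x$; so I would run the induction by hand, using the reversibility $G''^{g''} = G^g$ as the substitute.

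If $o(G^g+X^x) = \PP$ I claim $o(H^g+X^x) = \PP$. When $g\lor x = 0$ this is immediate, so assume $g\lor x = 1$; then every option of $G^g+X^x$ is in $\NN$, and it remains to check the options of $H^g+X^x$. An option $K^k+X^x$ with $K^k\in G\setminus\{G'^{g'}\}$ is already an option of $G^g+X^x$, hence in $\NN$; an option $H^g+X'^{x'}$ with $X'^{x'}\in X^x$ satisfies $o(H^g+X'^{x'}) = o(G^g+X'^{x'}) = \NN$ by the induction hypothesis together with $o(G^g+X^x)=\PP$; and an option $L^l+X^x$ with $L^l\in G''$ is an option of $G''^{g''}+X^x$, which is active and satisfies $o(G''^{g''}+X^x) = o(G^g+X^x) = \PP$, so $L^l+X^x\in\NN$. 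Hence $o(H^g+X^x)=\PP$.

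If $o(H^g+X^x)=\PP$ I claim $o(G^g+X^x)=\PP$; again assume $g\lor x = 1$. Here one cannot evaluate $G^g+X^x$ option by option, because the deleted option $G'^{g'}+X^x$ is controlled neither by $H^g$ nor by the induction hypothesis. Instead I would compute $o(G^g+X^x) = o(G''^{g''}+X^x)$ and show the right-hand side is $\PP$: the game $G''^{g''}+X^x$ is active, its options $L^l+X^x$ with $L^l\in G''$ are options of $H^g+X^x$ and hence in $\NN$, and its options $G''^{g''}+X'^{x'}$ with $X'^{x'}\in X^x$ satisfy $o(G''^{g''}+X'^{x'}) = o(G^g+X'^{x'}) = o(H^g+X'^{x'}) = \NN$ by $G^g = G''^{g''}$, the induction hypothesis, and $o(H^g+X^x)=\PP$. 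So every option of $G''^{g''}+X^x$ is in $\NN$, giving $o(G''^{g''}+X^x)=\PP$ and therefore $o(G^g+X^x)=\PP$. The degenerate cases $X\cong\es$ are subsumed in the above.

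The main obstacle is exactly this second implication: because the reversible option $G'^{g'}$ lies in $G^g$ but has been removed from $H^g$, a naive option-by-option comparison of $G^g+X^x$ with $H^g+X^x$ breaks down on the option $G'^{g'}+X^x$. The device that resolves it is to replace $G^g$ throughout by the equivalent game $G''^{g''}$ through which $G'^{g'}$ reverses: the options of $G''^{g''}$ are, by construction of $H^g$, visible inside $H^g$, and the only ``external'' options of $G''^{g''}+X^x$ are moves in $X$, which the induction hypothesis handles. The rest is routine activeness bookkeeping, all of which follows once one knows $g''=g$ from Corollary \ref{cor:equiv}(ii).
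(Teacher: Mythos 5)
Your proposal is correct and takes essentially the same route as the paper's proof: induction on $\birth(X^x)$, with the first case handled option-by-option and the second case routed through the equivalent game $G''^{g''}$, whose options all appear in $H^g$. The only cosmetic difference is that you phrase the second case contrapositively (assuming $o(H^g+X^x)=\PP$ and showing every option of $G''^{g''}+X^x$ is in $\NN$) where the paper assumes $o(G^g+X^x)=\NN$ and exhibits a $\PP$-option of $H^g+X^x$; your observation that Lemma \ref{lem:equal} does not directly apply is also accurate and matches the paper's choice to argue by hand.
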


\begin{proof}[Proof of Theorem \ref{thm:bypass}]
Define $H^g \defeq\cong ((G\setminus \{G'^{g'}\}) \cup G'' )^g$.
We show
\begin{align}
\forall X^x \in \short, o(G^g+X^x) = o(H^g+X^x)
\end{align}
by induction on $\birth(X^x)$.
We consider the following two cases separately: the case $o(G^g + X^x) = \PP$ and the case $o(G^g + X^x) = \NN$.
\begin{itemize}
\item The case $o(G^g + X^x) = \PP$:
If $g \lor x = 0$, then clearly $o(H^g + X^x) = \PP$.
If $g \lor x = 1$, then every option of $H^g + X^x$ is in $\NN$ as follows.
\begin{itemize}
\item For any $\hat{G}^{\hat{g}} \in H \cap G''$, the option $\hat{G}^{\hat{g}} + X^{x}$ satisfies 
\begin{align}
\hat{G}^{\hat{g}} + X^{x}
\eqlab{A}\in G''^{g''} + X^x
\eqlab{B}\subseteq\NN,
\end{align}
where
(A) follows from $\hat{G}^{\hat{g}} \in G''$
and (B) follows from $o(G''^{g''} + X^x) = o(G^{g} + X^x) = \PP$ and $g'' \lor x = g \lor x = 1$ since $G''^{g''} = G^g$.
\item For any $\hat{G}^{\hat{g}} \in H \cap G$, the option $\hat{G}^{\hat{g}} + X^{x}$ satisfies 
\begin{align}
\hat{G}^{\hat{g}} + X^{x}
\eqlab{A}\in G^{g} + X^x
\eqlab{B}\subseteq\NN,
\end{align}
where
(A) follows from $\hat{G}^{\hat{g}} \in G$
and (B) follows from $o(G^{g} + X^x) = \PP$ and $g \lor x = 1$.
\item For any $X'^{x'} \in X^x$, the option $H^g + X'^{x'}$ satisfies 
\begin{align}
o(H^g + X'^{x'})
\eqlab{A}= o(G^g + X'^{x'})
\eqlab{B}= \NN,
\end{align}
where
(A) follows from the induction hypothesis,
and (B) follows from $G^g + X^x \in \PP$ and $g \lor x = 1$.
\end{itemize}

\item The case $o(G^g + X^x) = \NN$:
Then $g \lor x = 1$. Hence, it suffices to show that $H^g + X^x$ has an option in $\PP$.
Because $o(G''^{g''} + X^x) = o(G^g + X^x) = \NN$ by $G''^{g''} = G^g$, 
the game $G''^{g''} + X^x$ has an option in $\PP$.
Hence, the following two cases are possible: the case where $o(\hat{G}^{\hat{g}}+ X^x) = \PP$ for some $\hat{G}^{\hat{g}} \in G''$;
and the case where $o(G''^{g''}+ X'^{x'}) = \PP$ for some $X'^{x'} \in X^x$.
We show that $H^g + X^x$ has an option in $\PP$ for the two cases separately:

\begin{itemize}
\item The case where $o(\hat{G}^{\hat{g}}+ X^x) = \PP$ for some $\hat{G}^{\hat{g}} \in G''$:
Because $\hat{G}^{\hat{g}} \in G'' \subseteq H$, we have $H^g + X^x \owns \hat{G}^{\hat{g}} + X^x \in \PP$.

\item The case where $o(G''^{g''}+ X'^{x'}) = \PP$ for some $X'^{x'} \in X^x$:
Then 
$H^g + X^x$ has the option $H^g + X'^{x'}$, which satisfies
\begin{align}
o(H^g + X'^{x'})
\eqlab{A}= o(G^g + X'^{x'})
\eqlab{B}= o(G''^{g''} + X'^{x'})
= \PP,
\end{align}
where
(A) follows from the induction hypothesis,
and (B) follows from $G''^{g''} = G^g$.
\end{itemize}
\end{itemize}
\end{proof}

We now show the existence of a canonical game for each equivalence class as follows.

\begin{theorem}
\label{thm:canonical-exist}
For any $G^g \in \short$, there exists $H^h \in \short$ such that $H^h = G^g$ and $H^h$ is canonical.
\end{theorem}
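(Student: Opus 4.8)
The plan is to argue by induction on $\birth(G^g)$. The base case $\birth(G^g)=0$ is immediate: then $G^g$ is $\es^0$ or $\es^1$, which has no options, so both clauses of Definition \ref{def:canonical} hold vacuously and $G^g$ is itself canonical. For the inductive step the strategy is first to replace every option by its canonical form (which exists by the induction hypothesis), and then to dispose of reversibility at the root, where the only real issue lies.

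Concretely, in the inductive step I would proceed as follows. Each option $G'^{g'}\in G^g$ has $\birth(G'^{g'})<\birth(G^g)$ (the remark following the definition of $\birth$), so the induction hypothesis supplies a canonical game $C(G'^{g'})$ with $C(G'^{g'})=G'^{g'}$; fix one such for each option. Applying the Replacement Lemma (Theorem \ref{prop:replace}) once for each option of $G^g$ in turn — reordering the option list as needed, and letting coincident canonical forms collapse harmlessly in the resulting set — and using that $=$ is an equivalence relation to compose the rewrites, one obtains $H^h\defeq\cong\{C(G'^{g'}):G'^{g'}\in G^g\}^g$ with $H^h=G^g$ and with every option of $H^h$ canonical. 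Now split into two cases. If no option of $H^h$ is reversible, then $H^h$ satisfies conditions (a) and (b) of Definition \ref{def:canonical} and is canonical, so $H^h$ is the desired game. Otherwise some option $H'^{h'}$ of $H^h$ is reversible through some $H''^{h''}\in H'^{h'}$; by Definition \ref{def:reversible} this means $H''^{h''}=H^h$, hence $H''^{h''}=G^g$ since $H^h=G^g$. But $H'^{h'}$ is canonical, so its option $H''^{h''}$ is canonical by Definition \ref{def:canonical}(a), and therefore $H''^{h''}$ is a canonical game equivalent to $G^g$. (Alternatively, in this case one can repeatedly apply the bypassing operation of Theorem \ref{thm:bypass} to a reversible option; this is presumably the route the paper's proof takes.)

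The step I expect to be the main obstacle is exactly this last case. After canonicalizing the options, a reversible option can reappear at the root, and the naive remedy — re-canonicalize, or bypass via Theorem \ref{thm:bypass} and recurse — appears to demand a termination or complexity argument, since neither $\birth(H^h)$ nor the option set is obviously monotone under these operations. The observation that makes the argument clean, and that I would stress, is that any such reappearing reverser $H''^{h''}$ is an option of a canonical game, hence already canonical, and being equivalent to $G^g$ it serves directly as the canonical form; so no separate termination argument is needed. The remaining points to verify are purely routine: that the chain of Replacement-Lemma rewrites indeed composes (using transitivity of $=$), and that a canonical form for each option is available (using the strictly smaller birthday of options).
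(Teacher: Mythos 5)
Your proof is correct, and it takes a genuinely different route from the paper's at the decisive step. The paper also begins by canonicalizing all options (via the induction hypothesis and the Replacement Lemma), but when a reversible option survives at the root it invokes Theorem \ref{thm:bypass} to bypass it and then recurses; to make that recursion terminate it sets up a strong induction on an auxiliary measure $f(J^j) = 1 + \sum_{J'^{j'} \in J} f(J'^{j'})$ and verifies that bypassing strictly decreases $f$. Your key observation — that if the option $H'^{h'}$ of $H^h$ is reversible through $H''^{h''}$, then $H''^{h''} = H^h = G^g$ and $H''^{h''}$ is already canonical by Definition \ref{def:canonical}(a) applied to the canonical game $H'^{h'}$, so $H''^{h''}$ itself is the desired canonical form — eliminates both the bypassing step and the termination measure, and lets a plain induction on $\birth(G^g)$ suffice. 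This is a real simplification. What the paper's longer argument buys in exchange is the quantitative guarantee $f(H^h) \leq f(G^g)$ (the canonical form is no more complex than the original under the measure $f$), which your argument does not provide, since the reverser $H''^{h''}$ is a subposition of the already-rewritten game $H^h$ whose options were replaced by canonical forms of a priori unbounded birthday; but that guarantee is not needed for the statement of Theorem \ref{thm:canonical-exist} itself. The bookkeeping points you flag (composing Replacement-Lemma rewrites by transitivity of $=$, and coincident canonical forms collapsing in the option set) are handled at the same level of detail in the paper and are indeed routine.
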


\begin{proof}[Proof of Theorem \ref{thm:canonical-exist}]
First, we define a mapping $f \colon \short \to \uint$ as
\begin{align}
f(J^j) \defeq= 1 + \sum_{J'^{j'} \in J} f(J'^{j'}) \label{eq:kj6i1xnoj58e}
\end{align}
for $J^j \in \short$.
Note that for any $J^j, J'^{j'} \in \short$, if $J^j \owns J'^{j'}$, then
\begin{align}
f(J^{j})
\eqlab{A}= 1 + \sum_{\hat{J}^{\hat{j}} \in J} f(\hat{J}^{\hat{j}})
\eqlab{B}\geq 1 + f(J'^{j'})
> f(J'^{j'}), \label{eq:lkm94rea1yf1}
\end{align}
where
(A) follows from \eqref{eq:kj6i1xnoj58e},
and (B) follows from $J'^{j'} \in J$.

To prove the theorem, we prove the following stronger statement by induction on $f(G^g)$:
for any $G^g \in \short$, there exists $H^h \in \short$ such that $H^h = G^g$, $H^h$ is canonical, and $f(H^h) \leq f(G^g)$.

By \eqref{eq:lkm94rea1yf1}, we can apply the induction hypothesis to each option of $G^g$.
Hence, for each option $G'^{g'}$ of $G^g$, there exists $H'^{h'} \in H^h$ such that $H'^{h'} = G^{g'}$, $H^{h'}$ is canonical, and $f(H'^{h'}) \leq f(G'^{g'})$.
By replacing each option $G'^{g'}$ with the corresponding $H'^{h'}$, we may suppose that all options of $H^h$ are canonical.
If $G^g$ has no reversible options, then $G^g$ is canonical, and the proof is done.
Therefore, we may suppose that every option of $G^g$ is canonical and $G^g$ has a reversible option $G'^{g'}$ through some $G''^{g''}$.
Define $H^g \defeq\cong  ((G\setminus \{G'^{g'}\}) \cup G'')^g$.
Then we have
\begin{align}
f(H^g)
&\eqlab{A}= 1 + \sum_{H'^{h'} \in H} f(H'^{h'})\\
&= 1 + \sum_{H'^{h'} \in (G\setminus \{G'^{g'}\}) \cup G''} f(H'^{h'})\\
&\leq 1 + \sum_{H'^{h'} \in (G\setminus \{G'^{g'}\})} f(H'^{h'}) + \sum_{H'^{h'} \in G''} f(H'^{h'})\\
&= 1 + \sum_{H'^{h'} \in G} f(H'^{h'}) - f(G'^{g'}) + \sum_{H'^{h'} \in G''} f(H'^{h'})\\
&\eqlab{B}= f(G^g) - f(G'^{g'}) + \left(f(G''^{g''}) - 1\right)\\
&< f(G^g) + \left(f(G''^{g''}) - f(G'^{g'})\right)\\
&\eqlab{C}< f(G^g), \label{eq:9aezo7kvyaqx}
\end{align}
where
(A) follows from \eqref{eq:kj6i1xnoj58e},
(B) follows from \eqref{eq:kj6i1xnoj58e},
and (C) follows from \eqref{eq:lkm94rea1yf1} and $G''^{g''} \in G'^{g'}$.
Therefore, by the induction hypothesis, there exists $\hat{H}^{\hat{h}} \in \short$ such that $\hat{H}^{\hat{h}} = H^g$, $\hat{H}^{\hat{h}}$ is canonical, and $f(\hat{H}^{\hat{h}}) \leq f(H^g)$,
so that
\begin{align}
\hat{H}^{\hat{h}} = H^g \cong ((G\setminus \{G'^{g'}\}) \cup G'')^g \eqlab{A}= G^g,
\quad f(\hat{H}^h) \leq f(H^g) \eqlab{B}< f(G^g)
\end{align}
as desired, where
(A) follows from Theorem \ref{thm:bypass},
and (B) follows from \eqref{eq:9aezo7kvyaqx}.
\end{proof}

\subsubsection{The Uniqueness of the Canonical Game}
\label{subsubsec:canonical-unique}

Next, we show that every equivalence class has at most one canonical game.
We first show the following lemma.

\begin{lemma}
\label{prop:not-link}
For any $G^g, H^h \in \short$, if $G^g = H^h$, then the following statements (i) and (ii) hold,
where $\not\bowtie$ indicates that the relation $\bowtie$ does not hold.
\begin{enumerate}[(i)]
\item for any $G'^{g'} \in G^g$, we have $G'^{g'} \not\bowtie H^h$.
\item for any $H'^{h'} \in H^h$, we have $H'^{h'} \not\bowtie G^g$.
\end{enumerate}
\end{lemma}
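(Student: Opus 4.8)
The plan is to prove statement (i) only; statement (ii) then follows immediately by symmetry, since the relation $=$ is symmetric ($G^g = H^h \iff H^h = G^g$) and (ii) is obtained from (i) simply by interchanging the names of the two games. So assume $G^g = H^h$ and, towards a contradiction, suppose that some option $G'^{g'} \in G^g$ satisfies $G'^{g'} \bowtie H^h$.

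The key step is to invoke Corollary \ref{prop:link} (the $\Longrightarrow$ direction) applied to the pair $G'^{g'}$ and $H^h$: from $G'^{g'} \bowtie H^h$ we obtain some $X^1 \in \short$ with $o(G'^{g'} + X^1) = o(H^h + X^1) = \PP$. Now I would observe that since $x = 1$, the sum $G^g + X^1$ is active, so its outcome is governed by its options via \eqref{eq:outcome2}, and moreover $G'^{g'} + X^1$ is genuinely one of those options (it has the form $G'^{g'} + X^1$ with $G'^{g'} \in G^g$). Because $o(G'^{g'} + X^1) = \PP$, the active game $G^g + X^1$ has an option in $\PP$, hence $o(G^g + X^1) = \NN$. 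On the other hand, $G^g = H^h$ forces $o(G^g + X^1) = o(H^h + X^1)$, so $o(G^g + X^1) = \PP$ — a contradiction. Therefore no option of $G^g$ can be $\bowtie$-related to $H^h$, which is (i).

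I do not expect a genuine obstacle here: the combinatorial content has already been packaged into Corollary \ref{prop:link} (ultimately into the Distinguishing Theorem, Theorem \ref{thm:distinguish}), and what remains is the one-line deduction above. The only point deserving a moment's care is the use of activeness of $X^1$: it is needed both to ensure $G^g + X^1$ does not terminate prematurely (so that the option $G'^{g'} + X^1$ in $\PP$ actually witnesses $o(G^g + X^1) = \NN$) and to ensure that $G'^{g'} + X^1$ really appears in the option set of $G^g + X^1$. If one preferred a self-contained proof avoiding Corollary \ref{prop:link}, the same idea works: negating $\bowtie$ via its definition together with Lemma \ref{lem:distinguish} would reconstruct such an $X^1$ directly, but routing through Corollary \ref{prop:link} is cleaner.
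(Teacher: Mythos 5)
Your proposal is correct and follows essentially the same route as the paper's own proof: obtain $X^1$ from Corollary \ref{prop:link}, note that $G^g + X^1$ is active and contains the option $G'^{g'} + X^1 \in \PP$ so that $o(G^g + X^1) = \NN \neq \PP = o(H^h + X^1)$, contradicting $G^g = H^h$. The reduction of (ii) to (i) by symmetry matches the paper's ``without loss of generality'' step, since both $=$ and $\bowtie$ are symmetric relations.
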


\begin{proof}[Proof of Lemma \ref{prop:not-link}]
Assume $G^g = H^h$.
We prove this by contradiction, assuming that at least one of (i) and (ii) does not hold.
Without loss of generality, we may assume $G'^{g'} \bowtie H^h$ for some $G'^{g'} \in G^g$.
Then there exists $X^1 \in \short$ such that $o(G'^{g'} + X^1) = o(H^h + X^1) = \PP$ by Corollary \ref{prop:link}.
This game $X^1$ distinguishes $G^g$ from $H^h$:
$o(H^h + X^1) = \PP$ directly from the definition of $X^1$;
$o(G^{g} + X^1) =  \NN$ by $G^g + X^1 \owns G'^{g'} + X^1 \in \PP$.
This conflicts with $G^g = H^h$.
\end{proof}

Using Lemma \ref{prop:not-link}, we see that a canonical game $G^g$ is covered by any other equivalent game to $G^g$ as follows.

\begin{theorem}
\label{prop:canonial-include}
For any $G^g, H^h \in \short$, if $G^g$ is canonical and $G^g = H^h$, then $G^g$ is covered by $H^h$.
\end{theorem}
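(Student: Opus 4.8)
The plan is to verify directly that every option of $G^g$ is covered by $H^h$; concretely, that for each $G'^{g'} \in G^g$ there is some $H'^{h'} \in H^h$ with $H'^{h'} = G'^{g'}$ (this is the reformulation of ``covered'' noted just after Definition~\ref{def:cover}). The whole argument is a short deduction from Lemma~\ref{prop:not-link} together with the canonicity of $G^g$, and involves no computation.

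First I would fix an arbitrary option $G'^{g'} \in G^g$. Applying Lemma~\ref{prop:not-link}(i) to the hypothesis $G^g = H^h$ yields $G'^{g'} \not\bowtie H^h$. Unwinding the definition of $\bowtie$, the relation $G'^{g'} \bowtie H^h$ is the conjunction of two conditions, so its negation $G'^{g'} \not\bowtie H^h$ means that at least one of them fails; that is, \emph{either} there exists $G''^{g''} \in G'^{g'}$ with $G''^{g''} = H^h$, \emph{or} there exists $H'^{h'} \in H^h$ with $G'^{g'} = H'^{h'}$.

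In the second case we are done immediately: such an $H'^{h'}$ witnesses that the option $G'^{g'}$ of $G^g$ is (not uncovered, hence) covered by $H^h$. So it remains to rule out the first case. Suppose there is $G''^{g''} \in G'^{g'}$ with $G''^{g''} = H^h$. Combining this with $G^g = H^h$ and transitivity of $=$ (recall $=$ is an equivalence relation on $\short$) gives $G''^{g''} = G^g$; by Definition~\ref{def:reversible} this says precisely that the option $G'^{g'}$ of $G^g$ is reversible (through $G''^{g''}$). But $G^g$ is canonical, so by Definition~\ref{def:canonical}(b) no option of $G^g$ is reversible — a contradiction. Hence the first case cannot occur, the second case always holds, and since $G'^{g'} \in G^g$ was arbitrary, $G^g$ is covered by $H^h$.

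I do not expect a genuine obstacle here: the statement follows by unfolding the definitions of $\bowtie$, ``covered'', ``reversible'', and ``canonical'', with Lemma~\ref{prop:not-link} supplying the one nontrivial input. The only point that needs a little care is reading $\not\bowtie$ correctly as the disjunction obtained by negating a conjunction of two clauses, and then matching the surviving clause against the definition of ``covered''.
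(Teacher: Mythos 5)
Your proof is correct and follows essentially the same route as the paper: fix an option $G'^{g'}$ of $G^g$, invoke Lemma~\ref{prop:not-link} to get $G'^{g'} \not\bowtie H^h$, and rule out the disjunct ``$G''^{g''} = H^h$ for some $G''^{g''} \in G'^{g'}$'' because via $G^g = H^h$ it would make $G'^{g'}$ a reversible option, contradicting canonicity. The remaining disjunct is exactly the statement that $G'^{g'}$ is covered, which is what the paper concludes as well.
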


\begin{proof}[Proof of Theorem \ref{prop:canonial-include}]
Choose $G'^{g'} \in G^g$ arbitrarily.
Since $G^g = H^h$, we have $G'^{g'} \not\bowtie H^h$ by Lemma \ref{prop:not-link}.
Namely, we have at least one of the following statements hold: $G''^{g''} = H^h$ for some $G''^{g''} \in G'^{g'}$; or $G'^{g'} = H'^{h'}$ for some $H'^{h'} \in H^h$.
Because the former $G''^{g''} = H^h = G^g$ contradicts that $G^g$ is canonical, the latter is true.
\end{proof}

By combining Theorems \ref{prop:simplify} and \ref{prop:canonial-include}, we obtain the following corollary.

\begin{corollary}
\label{prop:canonical-simplify}
For any $G^g, H^h \in \short$, if $G^g$ is canonical, then the following conditions (a) and (b) are equivalent.
\begin{enumerate}[(a)]
\item $G^g = H^h$.
\item The following conditions (b0)--(b2) hold.
\begin{itemize}
\item[(b0)] $G^g$ is covered by $H^h$. 
\item[(b1)] $\type(G^g) = \type(H^h)$. 
\item[(b2)] For any option $H'^{h'}$ of $H^h$ uncovered by $G^g$, there exists $H''^{h''} \in H'^{h'}$ such that $H''^{h''} = G^g$.
\end{itemize}
\end{enumerate}
\end{corollary}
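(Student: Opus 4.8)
The plan is to derive the corollary by directly gluing together the Simplification Theorem (Theorem~\ref{prop:simplify}) and Theorem~\ref{prop:canonial-include}; no fresh argument should be required, since all the real work is already packaged in those two results.

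For the implication (a) $\implies$ (b): assuming $G^g = H^h$ and that $G^g$ is canonical, I would first invoke Theorem~\ref{prop:canonial-include} to conclude that $G^g$ is covered by $H^h$, which is exactly condition~(b0). Now that (b0) holds, the standing hypothesis ``$G^g$ is covered by $H^h$'' of Theorem~\ref{prop:simplify} is met, so I can apply the (a) $\implies$ (b) direction of that theorem together with $G^g = H^h$, which yields conditions~(b1) and~(b2) verbatim. Collecting (b0), (b1), (b2) gives condition~(b) of the corollary.

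For the converse (b) $\implies$ (a): assuming (b0)--(b2), condition~(b0) supplies precisely the covering hypothesis needed to apply Theorem~\ref{prop:simplify}, while (b1) and (b2) are exactly condition~(b) of that theorem. Hence the (b) $\implies$ (a) direction of Theorem~\ref{prop:simplify} gives $G^g = H^h$, i.e.\ condition~(a).

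The only point demanding a bit of care is the bookkeeping of where ``$G^g$ is covered by $H^h$'' sits: in Theorem~\ref{prop:simplify} it is a blanket hypothesis rather than part of condition~(b), whereas in the corollary it is repackaged as (b0) inside condition~(b). This causes no trouble: in the forward direction canonicity plus equivalence forces the covering through Theorem~\ref{prop:canonial-include}, and in the backward direction (b0) is assumed outright. I do not expect any genuine obstacle here; the entire substance of the corollary is already contained in the two theorems being combined.
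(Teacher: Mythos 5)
Your proposal is correct and matches the paper's own proof exactly: the forward direction obtains (b0) from Theorem~\ref{prop:canonial-include} and then (b1)--(b2) from Theorem~\ref{prop:simplify}, and the converse follows directly from Theorem~\ref{prop:simplify} once (b0) supplies its covering hypothesis. Your remark about the covering condition being a blanket hypothesis in the theorem but part of condition~(b) in the corollary is the right bookkeeping point and is handled correctly.
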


\begin{proof}[Proof of Corollary \ref{prop:canonical-simplify}]
(Proof of (a) $\implies$ (b))
The condition (b0) is directly from Theorem \ref{prop:canonial-include}.
The conditions (b1) and (b2) hold directly from Theorem \ref{prop:simplify}.

(Proof of (b) $\implies$ (a)) Directly from Theorem \ref{prop:simplify}.
\end{proof}

Finally, the uniqueness of the canonical game is obtained as follows.

\begin{theorem}
\label{prop:canonical-unique}
For any $G^g, H^{h} \in \short$, if $G^g$ and $H^h$ are canonical and $G^g = H^h$, then $G^g \cong H^h$.
\end{theorem}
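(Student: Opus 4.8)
The plan is to prove uniqueness by a symmetric induction argument, using the structural description of equivalent games provided by Corollary~\ref{prop:canonical-simplify}. Suppose $G^g$ and $H^h$ are both canonical and $G^g = H^h$. Since $G^g$ is canonical and $G^g = H^h$, Corollary~\ref{prop:canonical-simplify} tells us that $G^g$ is covered by $H^h$, that $\type(G^g) = \type(H^h)$, and that every option $H'^{h'}$ of $H^h$ uncovered by $G^g$ has an option $H''^{h''}$ with $H''^{h''} = G^g$. By the symmetric roles of $G^g$ and $H^h$ (both are canonical, and $=$ is symmetric), the same corollary applied with the roles swapped gives that $H^h$ is covered by $G^g$, and that every option $G'^{g'}$ of $G^g$ uncovered by $H^h$ has an option $G''^{g''}$ with $G''^{g''} = H^h = G^g$.

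The key observation is that the last condition in each direction is vacuous because $G^g$ is canonical: if some option $G'^{g'}$ of $G^g$ were uncovered by $H^h$, then it would have an option $G''^{g''}$ with $G''^{g''} = G^g$, making $G'^{g'}$ a reversible option of $G^g$, contradicting canonicity. Hence \emph{every} option of $G^g$ is covered by $H^h$, and symmetrically every option of $H^h$ is covered by $G^g$. I would then argue by induction on $\birth(G^g) + \birth(H^h)$ (or on $\birth(G^g)$, invoking the analogous statement for smaller games): take any option $G'^{g'} \in G^g$. Since $G^g$ is covered by $H^h$, there is $H'^{h'} \in H^h$ with $H'^{h'} = G'^{g'}$. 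Both $G'^{g'}$ and $H'^{h'}$ are canonical (options of canonical games are canonical), they are equivalent, and $\birth(G'^{g'}) + \birth(H'^{h'}) < \birth(G^g) + \birth(H^h)$, so by the induction hypothesis $G'^{g'} \cong H'^{h'} \in H^h$. This shows $G \subseteq H$ as sets of games; the reverse inclusion $H \subseteq G$ follows by the same argument with the roles of $G^g$ and $H^h$ interchanged. Thus $G = H$, and since $g = h$ by $\type(G^g) = \type(H^h)$, we conclude $G^g \cong H^h$.

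The main obstacle is making sure the vacuity argument is correctly deployed in \emph{both} directions and that the induction is set up so that the ``uncovered option'' clauses of Corollary~\ref{prop:canonical-simplify} are genuinely eliminated before the recursive step — one must be careful that Corollary~\ref{prop:canonical-simplify} requires the \emph{first} argument to be canonical, so applying it with $H^h$ as the first argument is legitimate precisely because $H^h$ is also assumed canonical. A minor point to check is the base case: when $G = H = \es$, the claim $G^g \cong H^h$ reduces to $g = h$, which is delivered by $\type(G^g) = \type(H^h)$ (equivalently by Corollary~\ref{cor:equiv}~(ii)). Everything else is a routine unwinding of the covering relation.
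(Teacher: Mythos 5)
Your proof is correct and follows essentially the same route as the paper's: both arguments reduce to the fact that a canonical game is covered by any equivalent game (Theorem \ref{prop:canonial-include}, which is condition (b0) of Corollary \ref{prop:canonical-simplify}), applied in both directions by symmetry, followed by induction on $\birth(G^g)+\birth(H^h)$ to upgrade equivalence of matched options to identity. Your extra vacuity observation about the uncovered-option clause is harmless but redundant, since the covering of $G^g$ by $H^h$ already comes directly from (b0); on the other hand, you are slightly more explicit than the paper in noting that $g=h$ follows from $\type(G^g)=\type(H^h)$.
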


\begin{proof}[Proof of Theorem \ref{prop:canonical-unique}]
We prove this by induction on $\birth(G^g) + \birth(H^h)$.
It suffices to show $G \subseteq H$ by symmetry.
Choose $G'^{g'} \in G^g$ arbitrarily. By Theorem \ref{prop:canonial-include}, there exists $H'^{h'} \in H^h$ such that $G'^{g'} = H'^{h'}$.
Since $G^g$ and $H^h$ are canonical, $G'^{g'}$ and $H'^{h'}$ are also canonical.
Hence, we obtain $G'^{g'} \cong H'^{h'} \in H^h$ by the induction hypothesis.
This concludes $G \subseteq H$.
\end{proof}

Combining Theorems \ref{thm:canonical-exist} and \ref{prop:canonical-unique} completes the proof of Theorem \ref{thm:canonical}.

\subsubsection{Transitive Games}

We introduce transitive games and demonstrate that they are canonical.

A game is said to be \emph{transitive} if all of its subpositions $G^g$ satisfy the following condition:
every option of every option of $G^g$ is an option of $G^g$.

\begin{definition}
\label{def:transitive}
A game $G^g\in \short$ is said to be \emph{transitive} if the following conditions (a) and (b) hold.
\begin{enumerate}[(a)]
\item Every option of $G^g$ is transitive.
\item For any $G'^{g'} \in G^g$, it holds that $G' \subseteq G$.
\end{enumerate}
\end{definition}

Transitive games are canonical as follows.

\begin{theorem}
\label{thm:trans-canonical}
For any $G^g \in \short$, if $G^g$ is transitive, then $G^g$ is canonical.
\end{theorem}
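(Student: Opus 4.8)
The plan is to proceed by induction on $\birth(G^g)$ and to show directly that a transitive game satisfies the two defining conditions of a canonical game (Definition~\ref{def:canonical}): that every option is canonical, and that no option is reversible. The first condition will be immediate from the inductive hypothesis, since every option of a transitive game is again transitive by Definition~\ref{def:transitive}(a) and has strictly smaller birthday. So the real content is showing that a transitive game has no reversible option.

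To handle reversibility, suppose for contradiction that $G^g$ is transitive but has an option $G'^{g'}$ that is reversible through some $G''^{g''} \in G'^{g'}$, meaning $G''^{g''} = G^g$. The key structural fact I would exploit is transitivity condition~(b): since $G'^{g'} \in G^g$, we have $G' \subseteq G$, and since $G''^{g''} \in G'^{g'}$, we similarly have $G'' \subseteq G'$; combining these, $G''^{g''}$ is itself an option of $G^g$. But then $G''^{g''} = G^g$ with $G''^{g''} \in G^g$ contradicts Theorem~\ref{thm:option-neq}, which asserts $G^g \neq G'^{g'}$ for any option $G'^{g'} \in G^g$. This gives the desired contradiction, so no option of $G^g$ is reversible.

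One subtlety to check carefully: the definition of reversible (Definition~\ref{def:reversible}) requires $G''^{g''} = G^g$ as an \emph{equivalence}, not as identity $\cong$, and what I want to rule out is precisely this equivalence holding; Theorem~\ref{thm:option-neq} is stated in exactly the right form ($G^g \neq G'^{g'}$ as inequivalence for options), so the contradiction is clean once I have established $G''^{g''} \in G^g$. I should also note that condition~(b) of transitivity refers to the underlying sets $G', G$ (with the activeness markers attached to elements), so "$G' \subseteq G$" together with "$G'' \subseteq G'$" genuinely yields "$G''^{g''} \in G$" with its activeness intact, i.e.\ $G''^{g''} \in G^g$ in the abused notation of the paper.

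I do not anticipate a serious obstacle here; the argument is short once the right ingredients are assembled. The only mild care needed is bookkeeping with the $\cong$ versus $=$ distinction and making sure the inductive hypothesis is applied to options (which are transitive and of smaller birthday) to get condition~(a) of canonicity. The base case $G = \es$ is trivial since $\es^g$ has no options and is vacuously canonical.
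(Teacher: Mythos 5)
Your proof is correct, but it takes a genuinely different route from the paper's. The paper first establishes Lemma \ref{lem:trans-grundy}, showing that every transitive game satisfies $\grundy(G^g) = \birth(G^g)$, and then rules out reversibility by noting $\grundy(G''^{g''}) = \birth(G''^{g''}) < \birth(G^g) = \grundy(G^g)$, which forbids $G''^{g''} = G^g$ via Corollary \ref{cor:equiv} (iii). You instead exploit transitivity purely set-theoretically: condition (b) of Definition \ref{def:transitive} gives $G' \subseteq G$, so any $G''^{g''} \in G'^{g'}$ is itself an option of $G^g$, and Theorem \ref{thm:option-neq} (no game is equivalent to any of its own options, which rests on Theorem \ref{prop:twice}) immediately yields $G''^{g''} \neq G^g$. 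Your argument is shorter and more elementary, bypassing the Grundy-number machinery entirely; what the paper's detour buys is the independently useful fact that transitive games have $\grundy(G^g)=\birth(G^g)$. One tiny remark: the inclusion $G'' \subseteq G'$ you mention is not actually needed --- all you use is $G''^{g''} \in G'$ together with $G' \subseteq G$ --- but this is harmless. The induction on $\birth(G^g)$ for condition (a) of canonicity and the vacuous base case are handled exactly as in the paper.
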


The proof of Theorem \ref{thm:trans-canonical} relies on the following lemma.

\begin{lemma}
\label{lem:trans-grundy}
For any $G^g \in \short$, if $G^g$ is transitive, then the following statements (i) and (ii) hold.
\begin{enumerate}[(i)]
\item $\{\birth(G'^{g'}) : G'^{g'} \in G^g\} = \{0, 1, 2, \ldots, \birth(G^g)-1\}$.
\item $\grundy(G^g) = \birth(G^g)$.
\end{enumerate}
\end{lemma}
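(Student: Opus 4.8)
The plan is to prove (i) by induction on $\birth(G^g)$, and then derive (ii) as an easy consequence of (i) together with the definition of $\grundy$ via $\mex$. Both statements are really facts about the underlying set-structure of a transitive game, and activeness plays no role, so I expect the argument to be short and combinatorial.

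First I would handle (i). If $G = \es$, then $\birth(G^g) = 0$ and the left-hand side is $\es = \{0, 1, \ldots, -1\}$, so the base case is immediate. For the inductive step, assume $G \neq \es$ and that (i) holds for every option. Fix an option $G'^{g'} \in G^g$ of maximal birthday, so that $\birth(G^g) = 1 + \birth(G'^{g'})$. Since $G'^{g'}$ is transitive (Definition \ref{def:transitive}(a)), the induction hypothesis gives $\{\birth(G''^{g''}) : G''^{g''} \in G'^{g'}\} = \{0, 1, \ldots, \birth(G'^{g'}) - 1\}$, and by transitivity of $G^g$ (Definition \ref{def:transitive}(b)) every such $G''^{g''}$ is itself an option of $G^g$. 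Hence $\{0, 1, \ldots, \birth(G'^{g'}) - 1\} \subseteq \{\birth(H^h) : H^h \in G^g\}$, and adding the option $G'^{g'}$ itself, whose birthday is $\birth(G'^{g'})$, we get $\{0, 1, \ldots, \birth(G'^{g'})\} = \{0, 1, \ldots, \birth(G^g) - 1\} \subseteq \{\birth(H^h) : H^h \in G^g\}$. The reverse inclusion is immediate: every option $H^h \in G^g$ satisfies $\birth(H^h) < \birth(G^g)$, i.e. $\birth(H^h) \le \birth(G^g) - 1$. This gives (i).

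For (ii), I would again induct on $\birth(G^g)$, or simply invoke (i). Each option $G'^{g'} \in G^g$ is transitive, so by the induction hypothesis $\grundy(G'^{g'}) = \birth(G'^{g'})$. Therefore
\begin{align}
\grundy(G^g) = \mex\{\grundy(G'^{g'}) : G'^{g'} \in G^g\} = \mex\{\birth(G'^{g'}) : G'^{g'} \in G^g\} = \mex\{0, 1, \ldots, \birth(G^g) - 1\} = \birth(G^g),
\end{align}
where the third equality uses part (i) and the last is the definition of $\mex$. The base case $G = \es$ gives $\grundy(\es^g) = \mex \es = 0 = \birth(\es^g)$.

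The only mild subtlety — and the one place to be careful — is the inductive step of (i): one must check that the set of birthdays of options is exactly an initial segment, which requires using transitivity in the nontrivial direction (options of options are options) to fill in all the smaller values, not just to produce the maximal one. Everything else is bookkeeping with $\birth$ and $\mex$. I do not anticipate any real obstacle; the lemma is a structural warm-up for Theorem \ref{thm:trans-canonical}.
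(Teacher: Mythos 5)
Your proof is correct and follows essentially the same route as the paper's: for (i) both arguments pick an option of maximal birthday, apply the induction hypothesis to it, and use transitivity (options of options are options) to absorb its option set into $G$, with the reverse inclusion being immediate from the definition of $\birth$; for (ii) both compute $\grundy$ via $\mex$ of an initial segment using (i). No gaps.
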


\begin{proof}[Proof of Lemma \ref{lem:trans-grundy}]
(Proof of (i))
By the definition of $\birth(G^g)$, we have $\birth(G'^{g'}) < \birth(G^{g})$ for any $G'^{g'} \in G^g$,
so that the inclusion $\{\birth(G'^{g'}) : G'^{g'} \in G^g\} \subseteq \{0, 1, 2, \ldots, \birth(G^g)-1\}$ holds.
We prove the opposite inclusion $\supseteq$ by induction on $\birth(G^g)$.
By the definition of $\birth(G^g)$, there exists $\hat{G}^{\hat{g}} \in G^g$
such that $\birth(\hat{G}^{\hat{g}}) = \birth(G^g) - 1$.
We have
\begin{align}
\{\birth(G'^{g'}) : G'^{g'} \in G\}
&\eqlab{A}\supseteq \{\birth(G'^{g'}) : G'^{g'} \in \hat{G} \cup \{\hat{G}^{\hat{g}}\}\}\\
&= \{\birth(G'^{g'}) : G'^{g'} \in \hat{G}\} \cup \{\birth(\hat{G}^{\hat{g}})\}\\
&\eqlab{B}= \{0, 1, 2, \ldots, \birth(\hat{G}^{\hat{g}})-1\} \cup \{\birth(\hat{G}^{\hat{g}})\}\\
&= \{0, 1, 2, \ldots, \birth(\hat{G}^{\hat{g}})-1, \birth(\hat{G}^{\hat{g}})\}\\
&= \{0, 1, 2, \ldots, \birth(G^{g})-2, \birth(G^{g})-1\},
\end{align}
where
(A) follows from $\hat{G} \cup \{\hat{G}^{\hat{g}}\} \subseteq G$ since $G^g$ is transitive and $\hat{G}^{\hat{g}} \in G^g$,
and (B) follows from the induction hypothesis.

(Proof of (ii))
We prove this by induction on $\birth(G^g)$.
We have
\begin{align}
\grundy(G^g)
= \mex\{\grundy(G'^{g'}) : G'^{g'} \in G^g\}
\eqlab{A}= \mex\{\birth(G'^{g'}) : G'^{g'} \in G^g\}
\eqlab{B}= \mex\{0, 1, 2, \ldots, \birth(G^g)-1\}
=  \birth(G^g),
\end{align}
where
(A) follows from the induction hypothesis,
and (B) follows from (i) of this lemma.
\end{proof}

\begin{proof}[Proof of Theorem \ref{thm:trans-canonical}]
We prove this by induction on $\birth(G^g)$.
Choose $G'^{g'} \in G^g$ arbitrarily.
Since $G'^{g'}$ is transitive, it is canonical by the induction hypothesis.
Also, the option $G'^{g'}$ of $G^g$ is not reversible because
for any $G''^{g''} \in G'^{g'}$, we have
\begin{align}
\grundy(G''^{g''}) \eqlab{A}= \birth(G''^{g''}) < \birth(G'^{g'}) < \birth(G^g)  \eqlab{B}= \grundy(G^g),
\end{align}
where (A) and (B) follow from Lemma \ref{lem:trans-grundy} (ii);
this implies $G''^{g''} \neq G^g$ by Corollary \ref{cor:equiv} (iii).
\end{proof}

Note that nimbers with activeness defined in Definition \ref{def:nimber} are transitive, and there also exist countless other transitive games,
whereas the representatives of the equivalence classes of games without activeness are enumerated by $\{\st 0, \st 1, \st 2, \ldots\}$.
Moreover, there are an infinite number of canonical games that are not transitive, as follows.

\begin{theorem}
\label{thm:seq-canonical}
For any $(g_0, g_1, g_2, \ldots) \in \bin^{\infty}$ and $n \in \uint$,
the following conditions (a) and (b) are equivalent.
\begin{enumerate}[(a)]
\item $\emptyset^{g_0g_1\ldots g_n}$ is canonical.
\item $n \leq 1$; or $n \geq 2$ and $g_0g_1g_2 \in \{001, 011, 100, 101, 110\}$.
\end{enumerate}
\end{theorem}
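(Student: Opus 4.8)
The plan is to funnel everything through the ``stalk'' games $S_m := \es^{g_0 g_1 \cdots g_m}$, using that $S_m$ has the single option $S_{m-1}$ for $m \geq 1$ and no option for $m = 0$. First I would read off, directly from Definitions \ref{def:canonical} and \ref{def:reversible}, the recursion: $S_n$ is canonical if and only if $n \leq 1$, or $n \geq 2$ together with ``$S_{n-1}$ is canonical and $S_{n-2} \neq S_n$'' --- the option $S_{n-1}$ of $S_n$ being reversible means precisely that its unique option $S_{n-2}$ satisfies $S_{n-2} = S_n$. Unwinding this, for $n \geq 2$ one has: $S_n$ is canonical $\iff$ $S_{m-2} \neq S_m$ for every $m$ with $2 \leq m \leq n$, so the whole problem reduces to deciding when $S_{m-2} = S_m$.

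Next I would settle the base case $S_0 = S_2 \iff g_0 g_1 g_2 \in \{000,010,111\}$. One direction is cheap via Corollary \ref{cor:equiv}: when $g_0 \neq g_2$ (the triples $001, 011, 100, 110$) the games $S_0$ and $S_2$ differ in activeness, so $S_0 \neq S_2$ by part (ii); and for $101$ we have $o(\es^1) = \PP \neq \NN = o(\es^{101})$, so $S_0 \neq S_2$ by part (i). For the converse I would isolate one unified claim that yields all of $\es^{000} = \es^0$, $\es^{010} = \es^0$, and $\es^{111} = \es^1$ (each of these triples has $g_0 = g_2$, so they are instances of it): \emph{if $G^g$ has a unique option $G'^{g'}$ with $g' \geq g$ and $\es^g \in G'^{g'}$, then $G^g = \es^g$}. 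This is shown by induction on $\birth(X^x)$, comparing $o(G^g + X^x)$ with $o(\es^g + X^x)$: when $g \lor x = 0$ both are $\PP$; when $g \lor x = 1$, the options of $\es^g + X^x$ are exactly the sums $\es^g + X'^{x'}$ (since $\es^g$ has no option), the induction hypothesis matches these with the options $G^g + X'^{x'}$ of $G^g + X^x$, and the single remaining option $G'^{g'} + X^x$ is forced into $\NN$ because it is active ($g' \lor x = 1$) and has $\es^g + X^x$ among its options.

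The crux is to show that $g_0 g_1 g_2 \notin \{000,010,111\}$ forces $S_{m-2} \neq S_m$ for \emph{every} $m \geq 2$ (this is exactly what makes $g_3, g_4, \ldots$ irrelevant). I would prove it by strong induction on $m$, the case $m = 2$ being the base equivalence above. For $m \geq 3$, assume $S_{m'-2} \neq S_{m'}$ for all $2 \leq m' \leq m-1$ and suppose for contradiction $S_{m-2} = S_m$. Since $S_{m-3}$ is an option of $S_{m-2}$, Lemma \ref{prop:not-link} gives $S_{m-3} \not\bowtie S_m$; unwinding the definition of $\bowtie$ (the only option of $S_m$ is $S_{m-1}$; the only option of $S_{m-3}$ is $S_{m-4}$ when $m \geq 4$, and there is none when $m = 3$) yields either $S_{m-4} = S_m$ (possible only if $m \geq 4$) or $S_{m-3} = S_{m-1}$. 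The alternative $S_{m-3} = S_{m-1}$ contradicts the induction hypothesis at $m' = m-1$; for $m = 3$ the remaining alternative is $S_0 = S_2$, contradicting the hypothesis at $m' = 2$; and for $m \geq 4$, combining $S_{m-4} = S_m$ with $S_m = S_{m-2}$ gives $S_{m-4} = S_{m-2}$, contradicting the hypothesis at $m' = m - 2$. So in all cases we reach a contradiction, giving $S_{m-2} \neq S_m$.

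Assembling: if $g_0 g_1 g_2 \notin \{000,010,111\}$, the crux step together with the step-one recursion gives, by induction on $n$, that every $S_n$ is canonical, which is the implication (b)$\Rightarrow$(a). If instead $n \geq 2$ and $g_0 g_1 g_2 \in \{000,010,111\}$, then the base equivalence makes the option $S_1$ of $S_2$ reversible, so $S_2$ is not canonical, and Definition \ref{def:canonical}(a) propagates non-canonicity up the stalk to every $S_n$ with $n \geq 2$; with the trivial case $n \leq 1$ this gives (a)$\Rightarrow$(b). I expect the real obstacle to be the converse half of the base equivalence: the ready-made sufficient criterion of Lemma \ref{lem:equal} does \emph{not} apply here (e.g.~$o(\es^{00}+\es^{11}) = \PP$, so one cannot argue that every option of $\es^{000}$ added to an active game lands in $\NN$), so the tailored induction above is genuinely needed, and getting its ``extra option'' case right --- together with the observation in the crux step that the $\bowtie$-expansion should be applied once and combined with $S_{m-2} = S_m$ rather than iterated --- is where the care lies.
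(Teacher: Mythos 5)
Your proof is correct, but it reaches the two delicate points by a genuinely different route than the paper. The paper leans on the canonical-form machinery of Subsection \ref{subsec:canonical}: the base equivalences $\es^{000} = \es^{0}$, $\es^{010} = \es^{0}$, $\es^{111} = \es^{1}$ are read off from Corollary \ref{prop:canonical-simplify}, and for $n \geq 3$ the non-reversibility of the option $\es^{g_0\cdots g_{n-1}}$ is deduced by combining the canonicity of $\es^{g_0\cdots g_{n-3}}$ and $\es^{g_0\cdots g_{n-1}}$ (hence their inequivalence, by Theorem \ref{prop:canonical-unique}) with the failure of the covering condition (b0) of Corollary \ref{prop:canonical-simplify}. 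You instead (i) prove the base equivalences by a bespoke induction on $\birth(X^x)$, and (ii) run a strong induction on the statement ``$S_{m-2} \neq S_m$'' itself, using only Lemma \ref{prop:not-link} and the unwinding of $\bowtie$ along a one-option stalk. Both are valid; the paper's version is shorter because it reuses Theorems \ref{prop:simplify} and \ref{prop:canonical-unique}, while yours is more self-contained and isolates exactly which equivalence must fail at each level. Your $n = 2$ inequivalence argument (activeness for the four triples with $g_0 \neq g_2$, an outcome computation for $101$) also differs cosmetically from the paper's uniform use of the $\type$ invariant via Corollary \ref{cor:equiv}(ii).

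Two small points to tighten, neither a genuine gap. First, in the induction proving your unified claim, the assertion that ``the single remaining option $G'^{g'}+X^x$ is forced into $\NN$'' is only true (and only needed) in the subcase $o(\es^g+X^x)=\PP$; when $o(\es^g+X^x)=\NN$ that option may well lie in $\PP$ (indeed $o(\es^{00}+\es^{11})=\PP$), but there the matching of the common options already supplies a $\PP$-option of $G^g+X^x$. A clean write-up should split into these two subcases. Second, your parenthetical claim that Lemma \ref{lem:equal} ``does not apply'' is mistaken: taking $G^g \defeq\cong \es^g$ and $H^g$ the three-level stalk, condition (b) of the lemma is vacuous since $\es^g$ has no options, and condition (a) is only required when $o(\es^g + X^x) = \PP$, which excludes your witness $X^x \cong \es^{11}$ because $o(\es^0 + \es^{11}) = \NN$; so the lemma would in fact have done the job. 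This misreading does not affect your argument, since you prove the needed claim directly.
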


\begin{proof}[Proof of Theorem \ref{thm:seq-canonical}]
Define $G_i^{g_i} \defeq \cong \emptyset^{g_0g_1 \ldots g_i}$ for $i \in \uint$.

(Proof of (a) $\implies$ (b))
We prove the contraposition. Namely, we assume that $n \geq 2$ and $g_0g_1g_2 \in \{000, 010, 111\}$ and 
show that $G_n^{g_n}$ is not canonical.
We prove this by induction on $n$.

For the base case $n = 2$, 
the following three cases are possible: $G_n^{g_n} \cong \es^{000}$; $G_n^{g_n} \cong \es^{010}$; and
$G_n^{g_n} \cong \es^{111}$.
By Corollary \ref{prop:canonical-simplify},
we have $G_n^{g_n} = \es^0$ in the first and second cases, and $G_n^{g_n} = \es^1$ in the third case.
Namely, in any of the three cases, the game $G_n^{g_n}$ has a reversible option, so that the game $G_n^{g_n}$ is not canonical.

For the induction step $n \geq 3$, the game $G_n^{g_n}$ is not canonical
since the option $G_{n-1}^{g_{n-1}}$ is not canonical by the induction hypothesis.

(Proof of (b) $\implies$ (a))
We prove this by induction on $n$.
\begin{itemize}
\item The case $n = 0$: The game $G_{0}^{g_0} \cong \es^{g_0}$ is clearly canonical.
\item The case $n = 1$: The only option $G_0^{g_0}$ of $G_{1}^{g_1}$ is canonical by the case $n = 0$ above, and is not reversible obviously.
\item The case $n = 2$: The only option $G_{1}^{g_1}$ is canonical by the case $n = 1$ above.
Also, the only option $G_{1}^{g_1}$ is not reversible because
 $G_{0}^{g_0} \neq G_{2}^{g_2}$ follows from Corollary \ref{cor:equiv} (ii)
 and $\type(G_{0}^{g_0}) \neq \type(G_{2}^{g_2})$ for any $g_0g_1g_2 \in \{001, 011, 100, 101, 110\}$.
 
\item The case $n \geq 3$: By induction hypothesis, the games $G_{n-3}^{g_{n-3}}$, $G_{n-2}^{g_{n-2}}$, and $G_{n-1}^{g_{n-1}}$ are canonical.
In particular, the only option $G_{n-1}^{g_{n-1}}$ of $G_n^{g_n}$ is canonical.

It remains to show that the only option $G_{n-1}^{g_{n-1}}$ of $G_n^{g_n}$ is not reversible, that is, $G_{n-2}^{g_{n-2}} \neq G_{n}^{g_{n}}$.
Since $G_{n-2}^{g_{n-2}}$ is canonical, to prove $G_{n-2}^{g_{n-2}} \neq G_{n}^{g_{n}}$, 
it suffices to show that $G^g \defeq\cong G_{n-2}^{g_{n-2}}$ and
$H^h \defeq\cong G_{n}^{g_{n}}$ does not satisfy the condition (b0) of Corollary \ref{prop:canonical-simplify}.
Since $G_{n-3}^{g_{n-3}}$ and $G_{n-1}^{g_{n-1}}$ are canonical and $G_{n-3}^{g_{n-3}} \not\cong G_{n-1}^{g_{n-1}}$,
we have $G_{n-3}^{g_{n-3}} \neq G_{n-1}^{g_{n-1}}$ by Theorem \ref{prop:canonical-unique}.
Hence, $G_{n-2}^{g_{n-2}} \cong \{G_{n-3}^{g_{n-3}}\}^{g_{n-2}}$ is not covered by $G_{n}^{g_{n}} \cong \{G_{n-1}^{g_{n-1}}\}^{g_n}$,
that is, the condition (b0) of Corollary \ref{prop:canonical-simplify} is not satisfied as desired.
\end{itemize}
\end{proof}

\begin{remark}
\label{rem:transitive}
There exist games $G^g$ and $H^h$ that satisfy $G^g \neq H^h$ and cannot be distinguished by any transitive game.
In fact, $G^g \defeq\cong \es^1, H^h \defeq\cong \es^{011}$ satisfy this condition as follows.

By Theorem \ref{thm:seq-canonical}, the games $G^g$ and $H^h$ are canonical.
Hence, Theorem \ref{prop:canonical-unique} implies $G^g \neq H^h$.

On the other hand, any transitive game $X^x$ does not distinguish $G^g$ from $H^h$ as follows.
\begin{itemize}
\item It is directly seen that $o(G^g + \es^0) = \PP =  o(H^h + \es^0)$ and $o(G^g + \es^1) = \PP = o(H^h + \es^1)$.
\item Let $X^x \in \short \setminus\{\es^0, \es^1\}$ be an arbitrary transitive game.
Since $X^x$ is transitive, $X^x \owns \emptyset^b$ for some $b \in \{0, 1\}$, so that $G^g + X^x \owns G^g + \es^b \in \PP$ and $H^h + X^x \owns H^h + \es^b \in \PP$. Also, we have $g \lor x = 1 \lor x = 1$ and $h \lor x = 1 \lor x = 1$, Therefore, $o(G^g + X^x) = \NN = o(H^h + X^x)$.
\end{itemize}
\end{remark}

\subsection{Nimbers with activeness}
\label{subsec:nimber}

In this subsection, we focus nimbers with activeness as a particular case of games with activeness,
and we see that simple rules apply to them in the case $\bm{\gamma} = \bm{0}$ or $\bm{\gamma} = \bm{1}$.

The first result states that if a game is equivalent to some nimber $\st^{\bm{\gamma}} n$, then $n$ must be equal to $\grundy(G^g)$ as follows.

\begin{theorem}
\label{thm:star-grundy}
For any $G^g \in \short$, $\bm{\gamma} \in \bin^{\infty}$, and $n \in \uint$,
if $G^g = \st^{\bm{\gamma}}n$, then $\grundy(G^g) = n$.
\end{theorem}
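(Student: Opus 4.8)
The plan is to reduce the statement to two facts: (1) equivalent games have the same (activeness-ignoring) Grundy number, and (2) $\grundy(\st^{\bm{\gamma}} n) = n$; each of these is then quick.

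First I would apply Theorem \ref{thm:equiv} with the constant sequence $\bm{1}$. Since $G^g = \st^{\bm{\gamma}} n$, it gives $\grundy^{\bm{1}}(G^g) = \grundy^{\bm{1}}(\st^{\bm{\gamma}} n)$. By Corollary \ref{cor:grundy}, the left-hand side equals the singleton $\{\grundy(G^g)\}$ and the right-hand side equals $\{\grundy(\st^{\bm{\gamma}} n)\}$, whence $\grundy(G^g) = \grundy(\st^{\bm{\gamma}} n)$. So it only remains to compute $\grundy(\st^{\bm{\gamma}} n)$.

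Next I would show $\grundy(\st^{\bm{\gamma}} n) = n$ by induction on $n$. The key observation is that the mapping $\grundy \colon \short \to \uint$ is defined purely from option sets, ignoring activeness: $\grundy(G^g) = \mex\{\grundy(G'^{g'}) : G'^{g'} \in G\}$. Since $\st^{\bm{\gamma}} n \cong \{\st^{\bm{\gamma}} j : 0 \le j < n\}^{\gamma_n}$ by Definition \ref{def:nimber}, we obtain
\[
\grundy(\st^{\bm{\gamma}} n) = \mex\{\grundy(\st^{\bm{\gamma}} j) : 0 \le j < n\} = \mex\{0, 1, \ldots, n-1\} = n,
\]
using the induction hypothesis $\grundy(\st^{\bm{\gamma}} j) = j$ for $j < n$ (the base case $n = 0$ being $\grundy(\st^{\bm{\gamma}} 0) = \grundy(\es^{\gamma_0}) = \mex\emptyset = 0$). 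Alternatively, since nimbers with activeness are transitive, Lemma \ref{lem:trans-grundy}(ii) gives $\grundy(\st^{\bm{\gamma}} n) = \birth(\st^{\bm{\gamma}} n) = n$ directly. Combining with the previous paragraph yields $\grundy(G^g) = n$, as desired.

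I do not expect any genuine obstacle; the only subtlety worth flagging is that the activeness label $\gamma_n$ of $\st^{\bm{\gamma}} n$ is irrelevant throughout, because both $\grundy$ and $\grundy^{\bm{1}}$ depend only on the underlying option sets — which is precisely why passing to the sequence $\bm{1}$ in Theorem \ref{thm:equiv} is the correct reduction.
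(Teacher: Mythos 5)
Your proof is correct, and it takes a genuinely different --- and substantially lighter --- route than the paper. The paper proves this theorem by induction on $\birth(G^g)$ using the canonical-form machinery: it observes that $\st^{\bm{\gamma}}n$ is transitive, hence canonical by Theorem \ref{thm:trans-canonical}, invokes Corollary \ref{prop:canonical-simplify} to extract the covering conditions (b0)--(b2), and then establishes $\grundy(G^g) \geq n$ and $\grundy(G^g) \leq n$ separately (the lower bound from the fact that $\st^{\bm{\gamma}}n$ is covered by $G^g$, the upper bound from condition (b2) applied to the uncovered options). You instead reduce everything to the invariance of $\grundy$ under $=$ --- which is exactly Corollary \ref{cor:equiv}\,(iii), and which you correctly re-derive from Theorem \ref{thm:equiv} with the sequence $\bm{1}$ together with Corollary \ref{cor:grundy} --- plus the elementary computation $\grundy(\st^{\bm{\gamma}}n) = n$, which follows either by a one-line induction on $n$ from the mex definition of $\grundy$ or from Lemma \ref{lem:trans-grundy}\,(ii) via $\birth(\st^{\bm{\gamma}}n) = n$. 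There is no circularity: Theorem \ref{thm:equiv} and Corollaries \ref{cor:grundy} and \ref{cor:equiv} are all established well before the canonical-form material and do not depend on Theorem \ref{thm:star-grundy}. What your approach buys is brevity and independence from Subsections \ref{subsec:simplify}--\ref{subsec:canonical}; what the paper's approach buys is some extra structural information along the way (namely that $G^g$ must possess options equivalent to each $\st^{\bm{\gamma}}n'$ for $n' < n$), though none of that is needed for the stated conclusion.
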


\begin{proof}[Proof of Theorem \ref{thm:star-grundy}]
Assume $G^g = \st^{\bm{\gamma}}n$.
We prove this by induction on $\birth(G^g)$.
The game $\st^{\bm{\gamma}} n$ is transitive and thus canonical by Theorem \ref{thm:trans-canonical}.
Hence, the assumption $G^g = \st^{\bm{\gamma}}n$ implies that the conditions (b0)--(b2) of Corollary \ref{prop:canonical-simplify} hold for $G^g$ and $\st^{\bm{\gamma}}n$.
We prove $\grundy(G^g) \geq n$ and $\grundy(G^g) \leq n$ as follows.

(Proof of $\grundy(G^g) \geq n$)
By the condition (b0) of Corollary \ref{prop:canonical-simplify}, the game $\st^{\bm{\gamma}} n$ is covered by $G^g$.
Namely, for any $n' = 0, 1, 2, \ldots, n-1$, there exists $G'^{g'} \in G^g$ such that $G'^{g'} = \st^{\bm{\gamma}}n'$,
so that $\grundy(G'^{g'}) = n'$ by the induction hypothesis. Hence, we have
$\{\grundy(G'^{g'}) : G'^{g'} \in G^g\} \supseteq \{0, 1, 2, \ldots, n-1\}$, which yields
$\grundy(G^g) = \mex\{\grundy(G'^{g'}) : G'^{g'} \in G^g\} \geq \mex\{0, 1, 2, \ldots, n-1\} = n$.

(Proof of $\grundy(G^g) \leq n$)
It suffices to show that for any $G'^{g'} \in G^g$, it holds that $\grundy(G'^{g'}) \neq n$.
We choose $G'^{g'} \in G^g$ arbitrarily and consider the following two cases separately.
\begin{itemize}
\item The case where $G'^{g'}$ is an uncovered option of $G^g$ by $\st^{\bm{\gamma}}n'$:
By the condition (b2) of Corollary \ref{prop:canonical-simplify},
there exists $G''^{g''} \in G'^{g'}$ such that $G''^{g''} = \st^{\bm{\gamma}} n$,
which leads to $\grundy(G''^{g''}) = n$ by the induction hypothesis.
Hence, we obtain
\begin{align}
\grundy(G'^{g'}) = \mex\{\grundy(\hat{G}^{\hat{g}}) : \hat{G}^{\hat{g}} \in G'^{g'}\} \eqlab{A}\neq \grundy(G''^{g''}) = n,
\end{align}
where (A) follows from $G''^{g''} \in G'^{g'}$.
\item The case where $G'^{g'}$ is not an uncovered option of $G^g$ by $\st^{\bm{\gamma}}n'$:
Then there exists $ \st^{\bm{\gamma}}n' \in \st^{\bm{\gamma}}n$ such that $G'^{g'} = \st^{\bm{\gamma}}n'$,
so that $\grundy(G'^{g'}) = n' < n$ by the induction hypothesis.
\end{itemize}
\end{proof}

The following theorem gives the mex-rule for nimbers with activeness (cf.~Proposition \ref{prop:imp-mex}).

\begin{theorem}[Mex Rule]
\label{prop:star-mex}
For any $\bm{\gamma} = (\gamma_0, \gamma_1, \gamma_2, \ldots, ) \in \bin^{\infty}$ and finite set $A \subseteq \mathbb{N}_{\geq 0}$, the following conditions (a) and (b) are equivalent, where $m \defeq= \mex A$.
\begin{enumerate}[(a)]
\item $G^{\gamma_m} \defeq\cong \{\st^{\bm{\gamma}}i : i \in A\}^{\gamma_m} = \st^{\bm{\gamma}}m$.
\item $\gamma_i = 0$ for some $i \leq m$; or $\gamma_i = 1$ for any $i \in A$.
\end{enumerate}
\end{theorem}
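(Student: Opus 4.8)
The plan is to apply Corollary \ref{prop:canonical-simplify} with $\st^{\bm{\gamma}}m$ playing the role of the canonical game. First I would record that $m = \mex A$ means $\{0,1,\ldots,m-1\} \subseteq A$ and $m \notin A$, and that $\st^{\bm{\gamma}}m$ is transitive (an option $\st^{\bm{\gamma}}j$ with $j<m$ has only the options $\st^{\bm{\gamma}}k$ with $k<j$, each of which is again an option of $\st^{\bm{\gamma}}m$), hence canonical by Theorem \ref{thm:trans-canonical}. Since $=$ is symmetric, condition (a), namely $G^{\gamma_m} = \st^{\bm{\gamma}}m$, is equivalent to $\st^{\bm{\gamma}}m = G^{\gamma_m}$, which by Corollary \ref{prop:canonical-simplify} holds if and only if (b0) $\st^{\bm{\gamma}}m$ is covered by $G^{\gamma_m}$; (b1) $\type(\st^{\bm{\gamma}}m) = \type(G^{\gamma_m})$; and (b2) every option of $G^{\gamma_m}$ uncovered by $\st^{\bm{\gamma}}m$ has an option equivalent to $\st^{\bm{\gamma}}m$.

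Next I would show that (b0) and (b2) are automatic, so that only (b1) remains. For (b0): each option $\st^{\bm{\gamma}}j$ of $\st^{\bm{\gamma}}m$ (with $j<m$) is literally an option of $G^{\gamma_m}$ since $j \in A$, and $\st^{\bm{\gamma}}j = \st^{\bm{\gamma}}j$. For (b2): using $\grundy(\st^{\bm{\gamma}}i) = i$ (Lemma \ref{lem:trans-grundy}(ii), since $\st^{\bm{\gamma}}i$ is transitive with $\birth(\st^{\bm{\gamma}}i) = i$) together with Corollary \ref{cor:equiv}(iii), an option $\st^{\bm{\gamma}}i$ of $G^{\gamma_m}$ is uncovered by $\st^{\bm{\gamma}}m$ exactly when $i \in A$ and $i > m$; and each such $\st^{\bm{\gamma}}i$ has $\st^{\bm{\gamma}}m$ among its options (because $m < i$) with $\st^{\bm{\gamma}}m = \st^{\bm{\gamma}}m$, so (b2) holds (vacuously when there are no such options).

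The remaining --- and main --- step is to verify that (b1) is equivalent to condition (b), which is just an unwinding of the definition of $\type$. Both $\st^{\bm{\gamma}}m \cong \{\st^{\bm{\gamma}}j : j<m\}^{\gamma_m}$ and $G^{\gamma_m} \cong \{\st^{\bm{\gamma}}i : i \in A\}^{\gamma_m}$ carry the same activeness $\gamma_m$, so I would split on $\gamma_m$. If $\gamma_m = 0$, both types equal $0$, so (b1) holds, and condition (b) is satisfied by $i = m$. If $\gamma_m = 1$, both types lie in $\{1_{\exists 0}, 1_{\forall 1}\}$; since $\{0,\ldots,m-1\} \subseteq A$, the presence of some $j < m$ with $\gamma_j = 0$ makes both types equal $1_{\exists 0}$, whereas if $\gamma_j = 1$ for all $j < m$ then $\type(\st^{\bm{\gamma}}m) = 1_{\forall 1}$ while $\type(G^{\gamma_m})$ equals $1_{\forall 1}$ precisely when $\gamma_i = 1$ for every $i \in A$. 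Reading these subcases against condition (b) --- where ``$\gamma_i = 0$ for some $i \le m$'' reduces to ``$\gamma_i = 0$ for some $i < m$'' once $\gamma_m = 1$ --- yields the equivalence. The only delicate aspect of the whole argument is this final bookkeeping, namely keeping straight which elements of $A$ lie below $m$ versus above $m$; everything else is a direct appeal to the cited results.
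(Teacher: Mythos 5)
Your proof is correct and takes essentially the same route as the paper's: both reduce the statement to the Simplification Theorem machinery (you via Corollary \ref{prop:canonical-simplify}, using canonicity of the transitive game $\st^{\bm{\gamma}}m$; the paper directly via Theorem \ref{prop:simplify}, using only that $\st^{\bm{\gamma}}m$ is covered by $G^{\gamma_m}$), dispose of condition (b2) by noting every uncovered option $\st^{\bm{\gamma}}i$ with $i>m$ literally contains $\st^{\bm{\gamma}}m$, and reduce the whole question to the type equality $\type(G^{\gamma_m})=\type(\st^{\bm{\gamma}}m)$. Your closing case analysis on $\gamma_m$ is exactly the content of the paper's Lemma \ref{lem:star-mex}, merely organized by subcases rather than by the disjuncts of condition (b).
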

Note that
\begin{align}
\st^{\bm{\gamma}}m
\cong \{\st^{\bm{\gamma}}i : i \in \uint, 0 \leq i < m\}^{\gamma_m}
\eqlab{A}\subseteq \{\st^{\bm{\gamma}}i : i \in A\}^{\gamma_m}
\cong G^{\gamma_m},
\end{align}
where (A) follows from $m = \mex A$.
In particular, $\st^{\bm{\gamma}}m$ is covered by $G^{\gamma_m}$.

To prove Theorem \ref{prop:star-mex}, we first show that the condition (b) is equivalent to 
the condition $\type(G^{\gamma_m}) = \type(\st^{\bm{\gamma}}m)$ as the following lemma.

\begin{lemma}
\label{lem:star-mex}
For any $\bm{\gamma} = (\gamma_0, \gamma_1, \gamma_2, \ldots, ) \in \bin^{\infty}$ and finite set $A \subseteq \mathbb{N}_{\geq 0}$, 
the following conditions (a) and (b) are equivalent, 
where $m \defeq= \mex A$ and $G^{\gamma_m} \defeq\cong \{\st^{\bm{\gamma}}i : i \in A\}^{\gamma_m}$.
\begin{enumerate}[(a)]
\item $\gamma_i = 0$ for some $i \leq m$; or $\gamma_i = 1$ for any $i \in A$.
\item $\type(G^{\gamma_m}) = \type(\st^{\bm{\gamma}}m)$.
\end{enumerate}
\end{lemma}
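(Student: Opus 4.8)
The plan is to split on the value of the activeness bit $\gamma_m$, which is the outermost bit of both $G^{\gamma_m}$ and $\st^{\bm{\gamma}}m$, and to translate each occurrence of $\type$ via \eqref{eq:qftmhuw4xm1w} into a statement about the bits $\gamma_i$. Two elementary facts drive everything: by Definition \ref{def:nimber} the outermost bit of $\st^{\bm{\gamma}}i$ is $\gamma_i$ (so an option of $G^{\gamma_m}$ or of $\st^{\bm{\gamma}}m$ is inactive precisely when its index has $\gamma$-value $0$), and $\{0,1,\dots,m-1\}\subseteq A$ because $m=\mex A$.

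First I would dispose of the case $\gamma_m=0$. Here both $G^{\gamma_m}$ and $\st^{\bm{\gamma}}m$ have outermost bit $0$, so $\type(G^{\gamma_m})=0=\type(\st^{\bm{\gamma}}m)$ and (b) holds; and (a) holds trivially since $\gamma_m=0$ with $m\le m$. Thus (a) and (b) are both true and the equivalence is immediate in this case.

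The substantive case is $\gamma_m=1$. Then $\type(G^{\gamma_m})$ and $\type(\st^{\bm{\gamma}}m)$ both lie in $\{1_{\exists 0},1_{\forall 1}\}$, and unwinding \eqref{eq:qftmhuw4xm1w}: $\type(\st^{\bm{\gamma}}m)=1_{\exists 0}$ iff $\gamma_j=0$ for some $0\le j<m$ (the options of $\st^{\bm{\gamma}}m$ are exactly the $\st^{\bm{\gamma}}j$ with $j<m$), and $\type(G^{\gamma_m})=1_{\exists 0}$ iff $\gamma_i=0$ for some $i\in A$; otherwise the respective type is $1_{\forall 1}$. Since $\{0,\dots,m-1\}\subseteq A$, a $j<m$ with $\gamma_j=0$ is in particular an $i\in A$ with $\gamma_i=0$, giving the one-way implication $\type(\st^{\bm{\gamma}}m)=1_{\exists 0}\implies\type(G^{\gamma_m})=1_{\exists 0}$. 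Consequently (b) can fail only via $\type(G^{\gamma_m})=1_{\exists 0}$ together with $\type(\st^{\bm{\gamma}}m)=1_{\forall 1}$, i.e., exactly when $\gamma_i=0$ for some $i\in A$ and $\gamma_j=1$ for all $0\le j<m$. On the other side, because $\gamma_m=1$, the clause ``$\gamma_i=0$ for some $i\le m$'' of (a) is equivalent to ``$\gamma_j=0$ for some $0\le j<m$'', so (a) fails exactly when $\gamma_j=1$ for all $0\le j<m$ and $\gamma_i=0$ for some $i\in A$. These two descriptions of failure coincide verbatim, hence $\lnot$(a) $\iff$ $\lnot$(b), which gives (a) $\iff$ (b).

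I do not expect a genuine obstacle here; the only care needed is bookkeeping around the degenerate value $m=0$, where the ``$\forall\,0\le j<m$'' clauses become vacuous and $\st^{\bm{\gamma}}0=\es^{\gamma_0}$, and making the passage from ``$\type=1_{\exists 0}$'' to ``some option is inactive'' explicit from \eqref{eq:qftmhuw4xm1w}. Once the inclusion $\{0,\dots,m-1\}\subseteq A$ is invoked, the remaining argument is a short propositional equivalence.
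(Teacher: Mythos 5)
Your proof is correct and rests on exactly the same two observations as the paper's: the activeness bit of $\st^{\bm{\gamma}}i$ is $\gamma_i$, and $m=\mex A$ forces $\{0,1,\dots,m-1\}\subseteq A$, so that $\type(\st^{\bm{\gamma}}m)=1_{\exists 0}$ implies $\type(G^{\gamma_m})=1_{\exists 0}$. The only difference is organizational — you split on $\gamma_m$ and verify $\lnot$(a) $\iff$ $\lnot$(b) in the active case, whereas the paper proves the two implications directly by casing on the disjuncts of (a) and on $\type(G^{\gamma_m})$ — so this is essentially the paper's argument in contrapositive form.
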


\begin{proof}[Proof of Lemma \ref{lem:star-mex}]
(Proof of (a) $\implies$ (b)) 
By the assumption (a), it suffices to consider the following two cases:
the case where $\gamma_i = 0$ for some $i \leq m$, and the case where $\gamma_i = 1$ for any $i \in A$.
\begin{itemize}
\item The case where $\gamma_i = 0$ for some $i \leq m$:
We further divide into the following two cases by $\gamma_m$.
\begin{itemize}
\item The case $\gamma_m = 0$: Then $\type(G^{\gamma_m}) = 0 = \type(\st^{\bm{\gamma}}m)$ as desired.
\item The case $\gamma_m = 1$:
Then $\gamma_i = 0$ for some $i < m$.
We have $\st^{\bm{\gamma}} i \in \st^{\bm{\gamma}} m \subseteq G^{\gamma_m}$,
so that $\type(G^{\gamma_m}) = 1_{\exists 0} = \type(\st^{\bm{\gamma}}m)$.
\end{itemize}
\item The case where $\gamma_i = 1$ for any $i \in A$:
Since $\{0, 1, 2, \ldots, m-1\} \subseteq A$, we have $\gamma_0 = \gamma_1 = \cdots = \gamma_{m-1} = 1$,
so that $\type(G^{\gamma_m}) = 1_{\forall 1} = \type(\st^{\bm{\gamma}}m)$.
\end{itemize}

(Proof of (b) $\implies$ (a)) We divide into the following three cases by $\type(G^{\gamma_m})$.
\begin{itemize}
\item The case $\type(G^{\gamma_m}) = 0$:
Then $\gamma_m = 0$. In particular,  $\gamma_i = 0$ for some $i \leq m$.
\item The case $\type(G^{\gamma_m}) = 1_{\exists 0}$:
By $\type(\st^{\bm{\gamma}}m) = \type(G^{\gamma_m}) = 1_{\exists 0}$, there exists $i$ with $0 \leq i < m$ such that $\gamma_i = 0$.
\item The case $\type(G^{\gamma_m}) = 1_{\forall 1}$:
Directly from $\type(G^{\gamma_m}) = 1_{\forall 1}$, we have $\gamma_i = 1$ for any $i \in A$.
\end{itemize}
\end{proof}

\begin{proof}[Proof of Theorem \ref{prop:star-mex}]
(Proof of (a) $\implies$ (b))
Since $\st^{\bm{\gamma}}m$ is covered by $G^{\gamma_m}$,
the assumption (a) implies $\type(G^{\gamma_m}) = \type(\st^{\bm{\gamma}}m)$ by Theorem \ref{prop:simplify},
which is equivalent to the condition (b) of Theorem \ref{prop:star-mex} by Lemma \ref{lem:star-mex}.

(Proof of (b) $\implies$ (a))
It suffices to show the conditions (b1) and (b2) of Theorem \ref{prop:simplify} since $\st^{\bm{\gamma}} m$ is covered by $G^{\gamma_m}$.
The condition (b1) is directly from Lemma \ref{lem:star-mex}.
The condition (b2) is satisfied since
every $\st^{\bm{\gamma}}i \in G^{\gamma_m} \setminus \st^{\bm{\gamma}} m$ has an option $\st^{\bm{\gamma}}m$.
\end{proof}

\begin{example}
For any $\bm{\gamma} = (\gamma_0, \gamma_1, \gamma_2, \ldots) \in \bin^{\infty}$ with $\gamma_0 = 0$,
the condition (b) of Theorem \ref{prop:star-mex} is satisfied, and thus the condition (a) of Theorem \ref{prop:star-mex} also holds.
For example, if $\bm{\gamma} \defeq = (0, 1, 0, 1, 0, 1, \ldots )$, then
we have $\{\st^{\bm{\gamma}} 0, \st^{\bm{\gamma}} 1, \st^{\bm{\gamma}} 3, \st^{\bm{\gamma}} 5\}
= \st^{\bm{\gamma}} (\mex\{0, 1, 3, 5\}) =  \st^{\bm{\gamma}} 2$.
\end{example}

The next theorem corresponds to the addition rule of nimbers (cf.~Proposition \ref{prop:imp-star} (ii)).

\begin{theorem}
\label{prop:star-add}
Let $\bm{\gamma} = (\gamma_0, \gamma_1, \gamma_2, \ldots) \in \bin^{\infty}$.
Let $K \in \uint$ satisfy that
\begin{align}
\forall i, j \in \uint, \left(j < 2^K \implies \gamma_i = \gamma_{i \oplus j}\right). \label{eq:wb8dsvwfgnfv}
\end{align}
Then for any $a, b \in \mathbb{Z}_{\geq 0}$ with $b < 2^K$, we have $\st^{\bm{\gamma}} a + \st^{\bm{0}} b = \st^{\bm{\gamma}}(a\oplus b)$.
\end{theorem}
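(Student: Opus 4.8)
The plan is to induct on $a+b$ and, at the inductive step, to rewrite $\st^{\bm{\gamma}}a+\st^{\bm{0}}b$ as an explicit position whose options are nimbers and then collapse it using the Mex Rule (Theorem~\ref{prop:star-mex}). First I would record the compatibility of active statuses: applying \eqref{eq:wb8dsvwfgnfv} with first index $a$ and second index $b<2^K$ gives $\gamma_a=\gamma_{a\oplus b}$, so the status $\gamma_a\lor 0=\gamma_a$ of $\st^{\bm{\gamma}}a+\st^{\bm{0}}b$ matches the status $\gamma_{a\oplus b}$ of $\st^{\bm{\gamma}}(a\oplus b)$.

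Next I would carry out the reduction. The options of $\st^{\bm{\gamma}}a+\st^{\bm{0}}b$ are $\st^{\bm{\gamma}}a'+\st^{\bm{0}}b$ for $a'<a$ and $\st^{\bm{\gamma}}a+\st^{\bm{0}}b'$ for $b'<b$; since $b'<b<2^K$, the induction hypothesis (for the same $\bm{\gamma}$ and $K$) identifies these options with $\st^{\bm{\gamma}}(a'\oplus b)$ and $\st^{\bm{\gamma}}(a\oplus b')$, respectively. By repeated use of the Replacement Lemma (Theorem~\ref{prop:replace}) I then obtain $\st^{\bm{\gamma}}a+\st^{\bm{0}}b=\{\st^{\bm{\gamma}}i:i\in A\}^{\gamma_a}$, where $A:=\{a'\oplus b:a'<a\}\cup\{a\oplus b':b'<b\}$. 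A standard computation (the nim-addition identity; for $a=b=0$ this is just $A=\emptyset$) gives $\mex A=a\oplus b=:m$ and $m\notin A$, so in particular $\{0,1,\dots,m-1\}\subseteq A$. Since $\gamma_a=\gamma_m$, the position $\{\st^{\bm{\gamma}}i:i\in A\}^{\gamma_a}$ is exactly the game $G^{\gamma_m}$ of Theorem~\ref{prop:star-mex}, so by that theorem it remains only to check its condition (b): either $\gamma_i=0$ for some $i\le m$, or $\gamma_i=1$ for every $i\in A$.

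This verification is the heart of the proof, and it is where \eqref{eq:wb8dsvwfgnfv} is used essentially. Suppose the first alternative fails, i.e.\ $\gamma_i=1$ for all $i\le m$ (in particular $\gamma_m=\gamma_a=1$); I must show $\gamma_i=1$ for every $i\in A$. Fix $i\in A$. If $i<m$, then $\gamma_i=1$ by assumption, and $i=m$ is impossible since $m\notin A$, so assume $i>m$. If $i=a\oplus b'$ for some $b'<b$, then \eqref{eq:wb8dsvwfgnfv} with first index $a$ and second index $b'$ (note $b'<b<2^K$) gives $\gamma_i=\gamma_{a\oplus b'}=\gamma_a=1$. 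Otherwise $i=a'\oplus b$ for some $a'<a$; set $d:=a'\oplus a\ne 0$ and let $t$ be the position of the highest set bit of $d$, so $d<2^{t+1}$. Since $a'<a$ and $t$ is the top bit on which $a'$ and $a$ differ, bit $t$ of $a$ equals $1$ and bit $t$ of $a'$ equals $0$, hence bit $t$ of $i=a'\oplus b$ equals bit $t$ of $b$. Because $i\oplus m=(a'\oplus b)\oplus(a\oplus b)=d$, the top bit on which $i$ and $m$ differ is also $t$, and $i>m$ forces bit $t$ of $i$ to be $1$, so bit $t$ of $b$ is $1$; therefore $t\le K-1$ and $d<2^{t+1}\le 2^K$. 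Now \eqref{eq:wb8dsvwfgnfv} with first index $a$ and second index $d<2^K$ gives $\gamma_{a'}=\gamma_{a\oplus d}=\gamma_a=1$, and \eqref{eq:wb8dsvwfgnfv} with first index $a'$ and second index $b<2^K$ gives $\gamma_i=\gamma_{a'\oplus b}=\gamma_{a'}=1$. This establishes condition (b), so Theorem~\ref{prop:star-mex} yields $\{\st^{\bm{\gamma}}i:i\in A\}^{\gamma_a}=\st^{\bm{\gamma}}m=\st^{\bm{\gamma}}(a\oplus b)$, completing the induction.

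I expect the bit-counting step in the last paragraph — establishing $a'\oplus a<2^K$ from $a'<a$, $b<2^K$, and $a'\oplus b>a\oplus b$ — to be the only delicate point; the remainder is routine bookkeeping with the Replacement Lemma, the Mex Rule, and the classical nim-addition identity.
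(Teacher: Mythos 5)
Your proof is correct and follows essentially the same route as the paper's: induction on $a+b$, reduction of $\st^{\bm{\gamma}}a+\st^{\bm{0}}b$ to $\{\st^{\bm{\gamma}}c : c \in A\}^{\gamma_a}$ via the induction hypothesis and the Replacement Lemma, and an application of the Mex Rule (Theorem \ref{prop:star-mex}) whose condition (b) is verified by showing $\gamma_c=\gamma_{\mex A}$ for every $c\in A$ with $c>\mex A$. The only divergence is in the subcase $c=a'\oplus b$: you establish $a\oplus a'<2^K$ by a direct highest-set-bit argument on $d=a\oplus a'$, whereas the paper derives it from the floor-function characterization in Lemma \ref{lem:xor} (ii); both are valid.
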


In the proof of Theorem \ref{prop:star-add}, we utilize the following properties of nim-sum.
\begin{lemma}~
\label{lem:xor}
\begin{enumerate}[(i)]
\item For any $a, b \in \uint$, we have $a \oplus b = \mex \left(\{a'\oplus b: a' < a \} \cup \{a\oplus b': b' < b\}\right)$.
\item For any $K, a, b \in \uint$, the following equivalence holds: $\lfloor a / 2^K \rfloor = \lfloor (a \oplus b) / 2^K \rfloor \iff b < 2^K$.
\end{enumerate}
\end{lemma}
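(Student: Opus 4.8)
The plan is to prove the two parts independently, as (i) is the Sprague--Grundy characterization of nim-sum and (ii) is a statement about how $\oplus$ interacts with division by a power of two; neither relies on the game-theoretic material.

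For part (i), I would verify the two defining properties of $\mex$: that $a \oplus b$ is \emph{excluded} from the set $S \defeq= \{a' \oplus b : a' < a\} \cup \{a \oplus b' : b' < b\}$, and that every $v < a \oplus b$ \emph{belongs} to $S$. Exclusion is immediate from cancellation of $\oplus$: if $a \oplus b = a' \oplus b$, then XORing both sides by $b$ forces $a = a'$, contradicting $a' < a$, and symmetrically $a \oplus b = a \oplus b'$ forces $b = b'$. For the inclusion, I would fix $v < a \oplus b$, let $k$ be the highest bit position at which $v$ and $a \oplus b$ disagree, and note that since $v < a \oplus b$, the bit of $a \oplus b$ at position $k$ is $1$ while that of $v$ is $0$. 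Because $a \oplus b$ has a $1$ at position $k$, exactly one of $a, b$ carries a $1$ there. If it is $a$, I set $a' \defeq= v \oplus b$, so that $a' \oplus b = v$, and then check $a' < a$ by observing that $a' \oplus a = v \oplus (a \oplus b)$ has its highest set bit exactly at position $k$, where $a$ carries a $1$, so that $a'$ agrees with $a$ above position $k$ and clears the leading $1$ there. The case where $b$ carries the $1$ is symmetric, with $b' \defeq= v \oplus a$.

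For part (ii), the key observation is that division by $2^K$ extracts the bits at positions $\geq K$, and since $\oplus$ is bitwise without carry, those high bits of $a \oplus b$ are precisely the XOR of the high bits of $a$ and $b$. Concretely, writing $a = 2^K q_a + r_a$ and $b = 2^K q_b + r_b$ with $0 \leq r_a, r_b < 2^K$, I would argue $a \oplus b = 2^K (q_a \oplus q_b) + (r_a \oplus r_b)$ with $0 \leq r_a \oplus r_b < 2^K$, whence $\lfloor (a \oplus b)/2^K \rfloor = q_a \oplus q_b = \lfloor a/2^K \rfloor \oplus \lfloor b/2^K \rfloor$. The claimed equivalence then reduces to $q_a = q_a \oplus q_b \iff q_b = 0$ (XOR both sides by $q_a$), and $q_b = \lfloor b/2^K \rfloor = 0$ is exactly $b < 2^K$.

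The cancellation argument and the bit-splitting identity are routine; the one step demanding genuine care is the inclusion half of (i), namely confirming $a' < a$ (resp.\ $b' < b$) after the substitution. The crux is to choose the correct position $k$ as the highest disagreement between $v$ and $a \oplus b$ and to verify that XORing $a$ with $v \oplus (a \oplus b)$ clears a leading $1$ without introducing any higher bit; this is where the hypothesis $v < a \oplus b$ is actually used, and it is the only place in the lemma where the ordering matters.
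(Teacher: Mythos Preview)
Your argument for both parts is correct and is the standard treatment of these well-known facts about nim-sum. Note that the paper itself omits the proof of this lemma entirely, so there is no approach to compare against; your write-up would serve as a complete justification where the paper gives none.
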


We omit the proof of Lemma \ref{lem:xor}.

\begin{proof}[Proof of Theorem \ref{prop:star-add}]
We prove this by induction on $a+b$.
For the base case $a = b = 0$, we have
$\st^{\bm{\gamma}} 0 + \st^{\bm{0}} 0
\cong \st^{\bm{\gamma}} 0 + \es^0
\cong \st^{\bm{\gamma}} 0
\cong \st^{\bm{\gamma}} (0 \oplus 0)$.
We consider the induction step.
Let $A \defeq= \{a' \oplus b : a' < a\} \cup \{a \oplus b' : b' < b\}$.
Note that $\mex A = a \oplus b$ by Lemma \ref{lem:xor} (i).
We have
\begin{align*}
\st^{\bm{\gamma}} a + \st^{\bm{0}} b
&\cong (\{\st^{\bm{\gamma}} a' + \st^{\bm{0}} b : a' < a\} \cup \{\st^{\bm{\gamma}} a + \st^{\bm{0}} b' : b' < b\})^{\gamma_a \lor 0}\\
&\eqlab{A}= (\{\st^{\bm{\gamma}} (a'\oplus b) : a' < a\} \cup \{\st^{\bm{\gamma}} (a\oplus b') : b' < b\})^{\gamma_a}\\
&\cong \{\st^{\bm{\gamma}} c : c \in A\}^{\gamma_a}\\
&\eqlab{B}\cong \{\st^{\bm{\gamma}} c : c \in A\}^{\gamma_{a\oplus b}}\\
&\eqlab{C}\cong \{\st^{\bm{\gamma}} c : c \in A\}^{\gamma_{\mex A}}\\
&\eqlab{D}= \st^{\bm{\gamma}} (\mex A)\\
&\eqlab{E}\cong \st^{\bm{\gamma}} (a \oplus b),
\end{align*}
where
(A) follows from the induction hypothesis and Theorem \ref{prop:replace},
(B) follows from \eqref{eq:wb8dsvwfgnfv},
(C) follows from Lemma \ref{lem:xor} (i),
(D) follows from Theorem \ref{prop:star-mex} with justification in the next paragraph,
and (E) follows from Lemma \ref{lem:xor} (i).

To justify the application of Theorem \ref{prop:star-mex} in the equality (D) above,
we have to show that the condition (b) of Theorem \ref{prop:star-mex} is satisfied.
It suffices to show that
\begin{align}
\forall c \in A, \left(c > \mex A \implies  \gamma_c = \gamma_{\mex A}\right) \label{eq:rb6vfhx2dpvo}
\end{align}
because \eqref{eq:rb6vfhx2dpvo} implies the condition (b) of Theorem \ref{prop:star-mex} as follows.
\begin{itemize}
\item The case where $\gamma_i = 0$ for some $i \leq \mex A$: Then the condition (b) of Theorem \ref{prop:star-mex} directly holds.
\item The case where $\gamma_i = 1$ for all $i \leq \mex A$: Then \eqref{eq:rb6vfhx2dpvo} implies $\gamma_i = \gamma_{\mex A} = 1$ for any $i \in A$.
\end{itemize}
To prove \eqref{eq:rb6vfhx2dpvo}, choose $c \in A$ with $c > \mex A$ arbitrarily.
Then $c = a' \oplus b > \mex A$ for some $a' < a$; or $c = a \oplus b' > \mex A$ for some $b' < b$.
\begin{itemize}
\item The case where $c = a' \oplus b > \mex A$ for some $a' < a$:
Then
\begin{align}
\left\lfloor \frac{a'}{2^K} \right\rfloor
\leq \left\lfloor \frac{a}{2^K} \right\rfloor
\eqlab{A}= \left\lfloor \frac{a \oplus b}{2^K} \right\rfloor
\eqlab{B}= \left\lfloor \frac{\mex A}{2^K} \right\rfloor
\leq  \left\lfloor \frac{a' \oplus b}{2^K} \right\rfloor
\eqlab{C}= \left\lfloor \frac{a'}{2^K} \right\rfloor,
\end{align}
where
(A) follows from Lemma \ref{lem:xor} (ii) and $b < 2^K$,
(B) follows from Lemma \ref{lem:xor} (i),
and (C) follows from Lemma \ref{lem:xor} (ii) and $b < 2^K$.
Therefore,
\begin{align}
\left\lfloor \frac{a}{2^K} \right\rfloor
= \left\lfloor \frac{a'}{2^K} \right\rfloor
= \left\lfloor \frac{a \oplus (a \oplus a')}{2^K} \right\rfloor.
\end{align}
This leads to $a \oplus a' < 2^K$ by Lemma \ref{lem:xor} (ii), so that
\begin{align}
\gamma_c = \gamma_{a'\oplus b} \eqlab{A}= \gamma_{a'\oplus b \oplus (a \oplus a')}= \gamma_{a\oplus b} \eqlab{B}= \gamma_{\mex A}, \label{eq:t13x22lyb23f}
\end{align}
where
(A) follows from \eqref{eq:wb8dsvwfgnfv} and $a \oplus a' < 2^K$,
and (B) follows from Lemma \ref{lem:xor} (i).

\item The case where $c = a \oplus b' > \mex A$ for some $b' < b$:
We have
\begin{align}
\gamma_c = \gamma_{a\oplus b'} \eqlab{A}= \gamma_{a\oplus b' \oplus (b' \oplus b)}= \gamma_{a\oplus b} \eqlab{B}= \gamma_{\mex A},  \label{eq:u3dhv0tzdecl}
\end{align}
where
(A) follows from \eqref{eq:wb8dsvwfgnfv} and $b \oplus b' < 2^K$ since $0 \leq b' < b < 2^K$,
and (B) follows from Lemma \ref{lem:xor} (i).
\end{itemize}
\end{proof}

Regarding $\bm{\gamma} \in \{\bm{0}, \bm{1}\}$, the following simple rules hold.

\begin{theorem}~
\label{thm:star-property}
\begin{enumerate}[(i)]
\item For any $\bm{\gamma} = (\gamma_0, \gamma_1, \gamma_2, \ldots) \in \bin^{\infty}$ and $a \in \uint$, we have $\st^{\bm{\gamma}} a + \es^1 \cong \st^{\bm{1}} a$.
\item For any $G^g \in \short$, we have $G^g + \es^1 = \st^{\bm{0}}\grundy(G^g) + \es^1$.
\item For any $a, b \in \uint$, we have 
$\st^{\bm{0}} a + \st^{\bm{0}} b = \st^{\bm{0}}(a\oplus b)$, 
$\st^{\bm{0}} a + \st^{\bm{1}} b = \st^{\bm{1}}(a\oplus b)$, 
and $\st^{\bm{1}} a + \st^{\bm{1}} b = \st^{\bm{1}}(a\oplus b)$.
\end{enumerate}
\end{theorem}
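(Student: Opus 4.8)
The plan is to prove the three parts in the stated order, since (ii) uses (i) and (iii) uses both (i) and (ii). The only part carrying real content is (ii); the rest is bookkeeping built on Theorems \ref{prop:star-add}, \ref{thm:monoid}, and \ref{thm:add}. The key structural fact, which drives (ii) and the reductions to it, is that the active game $\es^1$ admits \emph{no} move, so adjoining $\es^1$ to any sum keeps that sum active forever; activeness then becomes irrelevant and the game behaves like an ordinary impartial game. For (i) itself I would induct on $a$: since $\es^1$ has no option, \eqref{eq:sum} together with Definition \ref{def:nimber} gives $\st^{\bm{\gamma}} a + \es^1 \cong \{\st^{\bm{\gamma}} j + \es^1 : 0 \le j < a\}^{\gamma_a \lor 1} \cong \{\st^{\bm{\gamma}} j + \es^1 : 0 \le j < a\}^{1}$, the induction hypothesis turns each option $\st^{\bm{\gamma}} j + \es^1$ \emph{literally} into $\st^{\bm{1}} j$, and hence the whole game is $\cong \st^{\bm{1}} a$ by Definition \ref{def:nimber} (the $a$-th entry of $\bm{1}$ being $1$); the base case $a = 0$ is $\es^{\gamma_0} + \es^1 \cong \es^1 \cong \st^{\bm{1}} 0$.

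For (ii) I would first prove the equivalent identity $G^g + \es^1 = \st^{\bm{1}} \grundy(G^g)$ by induction on $\birth(G^g)$; then (ii) follows at once, since $\st^{\bm{1}} \grundy(G^g) \cong \st^{\bm{0}} \grundy(G^g) + \es^1$ by (i) and commutativity (Theorem \ref{thm:monoid}). Since $\es^1$ has no option, $G^g + \es^1 \cong \{G'^{g'} + \es^1 : G'^{g'} \in G^g\}^{1}$; the induction hypothesis gives $G'^{g'} + \es^1 = \st^{\bm{1}} \grundy(G'^{g'})$ for every option $G'^{g'}$, so repeated use of the Replacement Lemma (Theorem \ref{prop:replace}) yields $G^g + \es^1 = \{\st^{\bm{1}} i : i \in A\}^{1}$ with $A = \{\grundy(G'^{g'}) : G'^{g'} \in G^g\}$, a finite set because every element of $\short$ has finitely many options. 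Now $\mex A = \grundy(G^g)$ by the definition of $\grundy$, all entries of $\bm{1}$ equal $1$, so condition (b) of the Mex Rule (Theorem \ref{prop:star-mex}) holds for $\bm{\gamma} = \bm{1}$ and this $A$, and $\{\st^{\bm{1}} i : i \in A\}^{1}$ is exactly the game called $G^{\gamma_m}$ there (with $m = \mex A$); hence condition (a) gives $\{\st^{\bm{1}} i : i \in A\}^{1} = \st^{\bm{1}}(\mex A) = \st^{\bm{1}} \grundy(G^g)$, which closes the induction (the case $\birth(G^g) = 0$ being the instance $A = \es$, $\mex A = 0$). The main obstacle is precisely the organisation of this step: recognising that adjoining $\es^1$ reduces everything to the classical Grundy/Mex computation, so that Theorems \ref{prop:replace} and \ref{prop:star-mex} can be invoked cleanly.

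For (iii): the identity $\st^{\bm{0}} a + \st^{\bm{0}} b = \st^{\bm{0}}(a\oplus b)$ is the case $\bm{\gamma} = \bm{0}$ of Theorem \ref{prop:star-add} (for $\bm{\gamma} = \bm{0}$ every $K$ satisfies \eqref{eq:wb8dsvwfgnfv}, so choose $K$ with $2^K > b$); likewise $\st^{\bm{1}} a + \st^{\bm{0}} b = \st^{\bm{1}}(a\oplus b)$ is its case $\bm{\gamma} = \bm{1}$, and $\st^{\bm{0}} a + \st^{\bm{1}} b = \st^{\bm{1}}(a\oplus b)$ then follows by commutativity (Theorem \ref{thm:monoid}). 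For the remaining identity I would use (i) to write $\st^{\bm{1}} b \cong \st^{\bm{0}} b + \es^1$, so by Theorem \ref{thm:monoid} $\st^{\bm{1}} a + \st^{\bm{1}} b \cong (\st^{\bm{1}} a + \st^{\bm{0}} b) + \es^1$; adding $\es^1$ on both sides of $\st^{\bm{1}} a + \st^{\bm{0}} b = \st^{\bm{1}}(a\oplus b)$ via Theorem \ref{thm:add} gives $(\st^{\bm{1}} a + \st^{\bm{0}} b) + \es^1 = \st^{\bm{1}}(a\oplus b) + \es^1$, and a final application of (i) rewrites the right-hand side as $\st^{\bm{1}}(a\oplus b)$, so $\st^{\bm{1}} a + \st^{\bm{1}} b = \st^{\bm{1}}(a\oplus b)$.
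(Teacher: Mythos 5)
Your proposal is correct and follows essentially the same route as the paper: part (i) by induction on $a$ using the sum formula, part (ii) by induction on $\birth(G^g)$ combining the Replacement Lemma with the Mex Rule (the paper applies Theorem \ref{prop:star-mex} with $\bm{\gamma}=\bm{0}$ to $\{\st^{\bm{0}}\grundy(G'^{g'}):G'^{g'}\in G^g\}^0+\es^1$, while you equivalently apply it with $\bm{\gamma}=\bm{1}$ after converting via (i)), and part (iii) from Theorem \ref{prop:star-add} plus commutativity and (i). The differences are purely organizational.
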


\begin{proof}[Proof of Theorem \ref{thm:star-property}]
(Proof of (i))
We prove this by induction on $\birth(G^g)$.
For any $a \in \uint$, we have
\begin{align}
\st^{\bm{\gamma}}a + \es^1
\cong \{\st^{\bm{\gamma}}a'  + \es^1 : a' < a\}^{\gamma_a \lor 1}
\cong \{\st^{\bm{\gamma}}a'  + \es^1 : a' < a\}^1
\eqlab{A}\cong \{\st^{\bm{1}}a' : a' < a\}^1
\cong \st^{\bm{1}}a,
\end{align}
where
(A) follows from the induction hypothesis.

(Proof of (ii))
We prove this by induction on $\birth(G^g)$. We have
\begin{align*}
G^g + \es^1
&\cong \{G'^{g'} : G'^{g'} \in G^g\}^g + \es^1\\
&\cong \{G'^{g'} + \es^1 : G'^{g'} \in G^g\}^1\\
&\eqlab{A}= \{\st^{\bm{0}}\grundy(G'^{g'}) + \es^1: G'^{g'} \in G^g\}^1\\
&\cong \{\st^{\bm{0}}\grundy(G'^{g'}): G'^{g'} \in G^g\}^0 + \es^1\\
&\cong \{\st^{\bm{0}} a : a \in \{\grundy(G'^{g'}): G'^{g'} \in G^g\}\}^0 + \es^1\\
&\eqlab{B}= \st^{\bm{0}}(\mex\{\grundy(G'^{g'}): G'^{g'} \in G^g\}) + \es^1\\
&\cong \st^{\bm{0}} \grundy(G^g) + \es^1,
\end{align*}
where
(A) follows from the induction hypothesis,
and (B) follows from Theorem \ref{prop:star-mex}.

(Proof of (iii)) The first (resp.~second) equality is obtained by applying Theorem \ref{prop:star-add} with $\bm{\gamma} = 0$ (resp.~$\bm{\gamma} = 1$) and a sufficiently large $K$.
The third equality is seen as
\begin{align}
\st^{\bm{1}} a + \st^{\bm{1}} b
\eqlab{A}\cong (\st^{\bm{0}} a + \es^1) + \st^{\bm{1}} b
\eqlab{B}= \st^{\bm{0}} (a \oplus b) + \es^1
\eqlab{C}\cong \st^{\bm{1}} (a \oplus b),
\end{align}
where
(A) follows from (i) of this theorem,
(B) follows from the second equality of (iii) of this theorem,
and (C) follows from (i) of this theorem.
\end{proof}

\section{Non-Increasing Games}
\label{sec:mono}

In this section, we consider games, called \emph{non-increasing} games, where once a game becomes inactive, it never returns to the active status. 
We show that even when limiting the discussion to non-increasing games,
the results in the previous section hold almost unchanged.

\begin{definition}
\label{def:mono}
A game $G^g \in \short$ is said to be \emph{non-increasing} if for any $G'^{g'} \in G^g$, the following conditions (a) and (b) hold.
\begin{enumerate}[(a)]
\item $G'^{g'}$ is non-increasing.
\item $g' \leq g$.
\end{enumerate}
We define $\mono$ as the set of all non-increasing games.
\end{definition}

Note that it is possible that one of two equivalent games is non-increasing while the other is not:
$\es^0 = \es^{010}$ but $\es^0 \in \mono$ and $\es^{010} \not\in \mono$.

The next theorem guarantees that $\mono$ is closed under the addition, so that 
we can define the sum of two non-increasing games within $\mono$.

\begin{theorem}
\label{thm:sum-mono}
For any $G^g, H^h \in \mono$, we have $G^g + H^h \in \mono$.
\end{theorem}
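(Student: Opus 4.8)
The plan is to prove this by induction on $\birth(G^g) + \birth(H^h)$, directly verifying the two defining conditions of non-increasing games from Definition \ref{def:mono} for the game $G^g + H^h$. Recall that by the definition of the sum, $G^g + H^h \cong \left(\{G'^{g'} + H^h : G'^{g'} \in G^g\} \cup \{G^g + H'^{h'} : H'^{h'} \in H^h\}\right)^{g \lor h}$, so every option of $G^g + H^h$ has one of the two forms $G'^{g'} + H^h$ with $G'^{g'} \in G^g$, or $G^g + H'^{h'}$ with $H'^{h'} \in H^h$. It therefore suffices to check, for each such option, that it is non-increasing and that its activeness value is at most $g \lor h$.

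First I would handle an option of the form $G'^{g'} + H^h$ with $G'^{g'} \in G^g$. Since $G^g \in \mono$, Definition \ref{def:mono} gives that $G'^{g'} \in \mono$ and $g' \leq g$; and $H^h \in \mono$ by hypothesis. Because $\birth(G'^{g'}) < \birth(G^g)$, the pair $(G'^{g'}, H^h)$ has strictly smaller $\birth$-sum, so the induction hypothesis yields $G'^{g'} + H^h \in \mono$. Moreover the activeness of $G'^{g'} + H^h$ is $g' \lor h$, and $g' \lor h \leq g \lor h$ since $g' \leq g$. The option $G^g + H'^{h'}$ with $H'^{h'} \in H^h$ is treated symmetrically, using $h' \leq h$ and the commutativity in Theorem \ref{thm:monoid} (i) (or simply the symmetric version of the same argument). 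Combining the two cases, every option of $G^g + H^h$ is non-increasing with activeness at most $g \lor h$, which is exactly the activeness of $G^g + H^h$; hence $G^g + H^h \in \mono$.

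The base case $\birth(G^g) + \birth(H^h) = 0$ forces $G = H = \es$, so $G^g + H^h \cong \es^{g \lor h}$ has no options and the conditions hold vacuously; in fact the inductive step already covers this, since the conditions on options are vacuous whenever there are no options. I do not anticipate a genuine obstacle here: the only point requiring a moment's care is to make sure the $\birth$-sum strictly decreases when passing to an option (which it does, since $\birth(G'^{g'}) < \birth(G^g)$ and $\birth(H^h)$ is unchanged), and to track that the $\lor$ of the component activeness values is monotone in each argument. The argument is a routine structural induction mirroring the proofs of Theorem \ref{thm:monoid}.
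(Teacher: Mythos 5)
Your proof is correct and follows essentially the same route as the paper's: induction on $\birth(G^g) + \birth(H^h)$, applying the induction hypothesis to each option of the sum and checking the activeness inequality $g' \lor h \leq g \lor h$ (the paper treats only one form of option, invoking symmetry, exactly as you do). No gaps.
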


\begin{proof}[Proof of Theorem \ref{thm:sum-mono}]
By induction on $\birth(G^g) + \birth(H^h)$.
Choose an option of $G^g + H^h$ arbitrarily.
Without loss of generality, we may assume the option is in the form $G'^{g'} + H^h$ for some $G'^{g'} \in G^g$.
We show that $G'^{g'} + H^h$ satisfies the conditions (a) and (b) of Definition \ref{def:mono} as follows.

(Proof of Definition \ref{def:mono} (a))
We have $G'^{g'} \in \mono$ by $G^g \in \mono$.
Also, we have $H^h \in \mono$ directly from the assumption.
Hence, we obtain $G'^{g'} + H^h \in \mono$ by the induction hypothesis.

(Proof of Definition \ref{def:mono} (b))
We have $g'+h \leq g+h$ since $g' \leq g$ by $G^g \in \mono$.
\end{proof}

\begin{definition}
We define a binary relation $=_{\down}$ of $\mono$ as
\begin{align}
\{(G^g, H^h) \in \mono \times \mono : \forall X^x \in \mono, o(G^g+X^x) = o(H^h+X^x)\}.
\end{align}
\end{definition}

The binary relation $=_{\down}$ is an equivalent relation of $\mono$, obviously.

To limit the discussion to non-increasing games,
for nimbers $\st^{\bm{\gamma}} n$, we consider only non-decreasing sequences $\bm{\gamma} \in \bin^{\infty}_{\leq}$,
where $\bin^{\infty}_{\leq}$ denotes the set of all non-decreasing infinite sequences over $\bin$, that is,
\begin{align}
\bin^{\infty}_{\leq} \defeq = \{\bm{\gamma} = (\gamma_0, \gamma_1, \gamma_2, \ldots) \in \bin^{\infty} : \forall i \in \uint, \gamma_i \leq \gamma_{i+1}\}.
\end{align}

Now, we replace $\short$ (resp.~$=$, $\bin^{\infty}$) with $\mono$ (resp.~$=_{\down}$, $\bin^{\infty}_{\leq}$).
Accordingly, we consider the equivalence classes modulo $=_{\down}$ instead of modulo $=$.
We see that the results in Section \ref{sec:main} hold true even if this replacement is made.
Since $G^g = H^h$ implies $G^g =_{\down} H^h$, a proof showing the relation $=$ still holds for the relation $=_{\down}$ as is.
On the other hand, to verify whether a proof showing $G^g \neq H^h$ can be directly used to show $G^g \not=_{\down} H^h$,
it is necessary to check that the game distinguishing $G^g$ from $H^h$ in the proof, referred to as a \emph{distinguisher}, is non-increasing.
The points to note are listed below.

\begin{itemize}
\item The paragraph above Theorem \ref{prop:twice}:
Since the shown counterexamples are non-increasing,
the statements (ii')--(iii') are false even in $\mono$.

\item Corollary \ref{cor:equiv}: 
The sequences $\bm{\gamma} = \bm{0}$ and $\bm{\gamma} = \bm{1}$, with which Theorem \ref{thm:equiv} is applied in the proof, are non-decreasing.

\item Theorem \ref{thm:option-neq}: 
In the proof, the distinguisher $G^g + \emptyset^1$ is non-increasing.

\item Theorem \ref{thm:distinguish}: 
In the proof, the distinguisher $X^1$ is non-increasing.

\item Lemma \ref{lem:distinguish}: 
In the proof, the distinguisher $X^1$ is non-increasing.

\item Lemma \ref{lem:distinguish2}: 
In the proof, the distinguisher $X^1$ is non-increasing.

\item Theorem \ref{thm:bypass}: 
A game obtained by bypassing is indeed non-increasing as follows.
\begin{theorem}
\label{thm:bypass-mono}
For any $G^g, G'^{g'}, G''^{g''} \in \mono$ with $G''^{g''} \in G'^{g'} \in G^g$, we have $((G\setminus \{G'^{g'}\}) \cup G'')^g \in \mono$.
\end{theorem}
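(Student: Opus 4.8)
The plan is to verify the two defining conditions of Definition \ref{def:mono} directly for the game $H^g \defeq\cong ((G\setminus \{G'^{g'}\}) \cup G'')^g$, by partitioning the options of $H^g$ according to their origin. Since $H = (G\setminus\{G'^{g'}\}) \cup G''$, every option $\hat{G}^{\hat{g}}$ of $H^g$ either lies in $G\setminus\{G'^{g'}\} \subseteq G$ (inherited from $G^g$) or lies in $G''$ (inherited from $G''^{g''}$); I would treat these two cases separately.

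First I would handle the options inherited from $G^g$: if $\hat{G}^{\hat{g}} \in G\setminus\{G'^{g'}\}$, then $\hat{G}^{\hat{g}} \in G^g$, so since $G^g \in \mono$, Definition \ref{def:mono} gives at once that $\hat{G}^{\hat{g}}$ is non-increasing and that $\hat{g} \leq g$. Next I would handle the options inherited from $G''^{g''}$: if $\hat{G}^{\hat{g}} \in G''$, then since $G''^{g''} \in \mono$, the game $\hat{G}^{\hat{g}}$ is non-increasing and $\hat{g} \leq g''$. It then remains to bound $g''$ by $g$, which follows from the monotonicity of the activeness values along the chain $G''^{g''} \in G'^{g'} \in G^g$: from $G'^{g'} \in \mono$ and $G''^{g''} \in G'^{g'}$ we get $g'' \leq g'$, and from $G^g \in \mono$ and $G'^{g'} \in G^g$ we get $g' \leq g$, whence $\hat{g} \leq g'' \leq g' \leq g$. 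This establishes conditions (a) and (b) of Definition \ref{def:mono} for every option of $H^g$, so $H^g \in \mono$.

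There is no genuine obstacle here; unlike most proofs in this section, no induction is needed, and the argument is pure bookkeeping. The only point that deserves a word of care is that the hypothesis assumes all three of $G^g$, $G'^{g'}$, and $G''^{g''}$ to be non-increasing — not merely $G^g$ — and all three assumptions are actually used: $G^g \in \mono$ for the options in $G\setminus\{G'^{g'}\}$ and for $g' \leq g$, $G'^{g'} \in \mono$ for $g'' \leq g'$, and $G''^{g''} \in \mono$ for the non-increasingness of the options in $G''$ and for $\hat{g} \leq g''$.
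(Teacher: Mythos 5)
Your proof is correct and matches the paper's own argument essentially verbatim: both partition the options of $((G\setminus \{G'^{g'}\}) \cup G'')^g$ into those inherited from $G$ and those inherited from $G''$, and both use the chain $\hat{g} \leq g'' \leq g' \leq g$ for the latter case. Nothing further is needed.
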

\begin{proof}[Proof of Theorem \ref{thm:bypass-mono}]
Choose $H^h \in ((G\setminus \{G'^{g'}\}) \cup G'')^g$ arbitrarily.
Then at least one of $H^h \in G$ and $H^h \in G''$ holds.
We consider the following two cases separately.
\begin{itemize}
\item The case $H^h \in G$: Then $H^{h} \in \mono$ and $h \leq g$ directly from $G^g \in \mono$.
\item The case $H^h \in G''$: Then $H^{h} \in \mono$ and $h \leq g'' \leq g' \leq g$ from $G''^{g''} \in G'^g \in G^g \in \mono$.
\end{itemize}
\end{proof}

\item Theorem \ref{thm:seq-canonical}:
For any $(g_0, g_1, g_2, \ldots) \in \bin^{\infty}_{\leq}$, it is not the case $g_0g_1g_2 \in \{100, 101, 110\}$.
However, it is possible that $g_0g_1g_2 \in \{001, 011\}$.
Therefore, there are still an infinite number of canonical games that are not transitive even in modulo $=_{\down}$.

\item Remark \ref{rem:transitive}:
The shown examples, $G^g \defeq\cong \es^1, H^h \defeq\cong \es^{011}$, are non-increasing.
Therefore, even in modulo $=_{\down}$, there exist games $G^g$ and $H^h$ that satisfy $G^g \neq H^h$ and cannot be distinguished by any transitive game.

\item Theorem \ref{prop:star-mex}: 
For any $\bm{\gamma} \in \bin^{\infty}_{\leq}$, the condition (b) of Theorem \ref{prop:star-mex} always holds,
which yields the following corollary.
\begin{corollary}
\label{cor:star-mex}
For any $\bm{\gamma} = (\gamma_0, \gamma_1, \gamma_2, \ldots, ) \in \bin^{\infty}_{\leq}$ and finite set $A \subseteq \mathbb{N}_{\geq 0}$, 
we have
\begin{align}
\{\st^{\bm{\gamma}}i : i \in A\}^{\gamma_m} = \st^{\bm{\gamma}}m, \label{eq:zrh4w2v3ee35}
\end{align}
where $m \defeq= \mex A$.
\end{corollary}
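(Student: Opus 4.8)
The plan is to obtain Corollary~\ref{cor:star-mex} directly from Theorem~\ref{prop:star-mex}. Observe that the identity \eqref{eq:zrh4w2v3ee35} to be established is exactly condition (a) of Theorem~\ref{prop:star-mex}, which that theorem states for the relation $=$ on $\short$. Since $\mono \subseteq \short$, quantifying over all $X^x \in \short$ is more demanding than quantifying over all $X^x \in \mono$, so $G^g = H^h$ implies $G^g =_{\down} H^h$ for all $G^g, H^h \in \mono$. Hence it suffices to verify that, when $\bm{\gamma} \in \bin^{\infty}_{\leq}$, condition (b) of Theorem~\ref{prop:star-mex} is automatically satisfied.

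To check this, split on the value of $\gamma_0$. If $\gamma_0 = 0$, then $\gamma_i = 0$ holds for $i = 0$, and $0 \leq m$, so the first disjunct of condition (b) holds. If $\gamma_0 = 1$, then, since $\bm{\gamma}$ is non-decreasing, $\gamma_i = 1$ for every $i \in \uint$; in particular $\gamma_i = 1$ for every $i \in A$, so the second disjunct of condition (b) holds. In either case condition (b) is satisfied, so Theorem~\ref{prop:star-mex} yields $\{\st^{\bm{\gamma}}i : i \in A\}^{\gamma_m} = \st^{\bm{\gamma}}m$, and therefore also $\{\st^{\bm{\gamma}}i : i \in A\}^{\gamma_m} =_{\down} \st^{\bm{\gamma}}m$ by the observation of the first paragraph.

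There is essentially no obstacle: the only point that needs attention is that the corollary concerns $=_{\down}$ on $\mono$ rather than $=$ on $\short$, which is why the (trivial) implication $G^g = H^h \implies G^g =_{\down} H^h$ has to be invoked explicitly; the remainder is the one-line case analysis on $\gamma_0$ using monotonicity of $\bm{\gamma}$.
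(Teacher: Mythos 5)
Your proposal is correct and matches the paper's own argument: the paper likewise derives the corollary by noting that for non-decreasing $\bm{\gamma}$ condition (b) of Theorem \ref{prop:star-mex} always holds (your case split on $\gamma_0$ just makes this explicit), and likewise observes that the equality then holds with $=$, hence a fortiori with $=_{\down}$.
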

Note that the equality \eqref{eq:zrh4w2v3ee35} holds true with $=$, not only with $=_{\down}$.
\end{itemize}

\bibliographystyle{plain}
\bibliography{arxiv}

\end{document}